\newcommand{\pt}{pt}
\newcommand{\deRham}{\Omega}
\newcommand{\deRhama}{\Omega_X}
\newcommand{\Sn}{\mathfrak{S}_n}
\newcommand{\Snull}{\mathfrak{S}_0}
\newcommand{\Seins}{\mathfrak{S}_1}
\newcommand{\An}{\mathfrak{A}_n}
\newcommand{\Azwei}{\mathfrak{A}_2}
\newcommand{\ra}{\rightarrow}
\newcommand{\hra}{\hookrightarrow}
\newcommand{\sira}{\;{\stackrel{\sim}{\rightarrow}}\;}
\newcommand{\tha}{\twoheadrightarrow}
\newcommand{\BZ}{{\mathbb{Z}}}
\newcommand{\BR}{{\mathbb{R}}}
\newcommand{\BC}{{\mathbb{C}}}
\newcommand{\BK}{{\mathbb{K}}}
\newcommand{\ol}[1]{{\overline{#1}}}
\newcommand{\os}{\overset}
\newcommand{\cs}[2]{{#1}_{#2}} %1. Argument: Halm, 2. Argument: Raum
\newcommand{\Str}{S}
\newcommand{\Strr}{T}
\newcommand{\im}{\text{im}\hspace{0,5mm}}
\newcommand{\Per}[1]{\text{Per}_{{#1}}}
\newcommand{\wXCons}{\operatorname{Sh}_{w, \hspace{0,5mm}c}(X)}
\newcommand{\XCons}{\operatorname{Sh}_c(X)}
\newcommand{\Dbwc}{D^b_{w,\hspace{0,5mm}c}(X)}
\newcommand{\Dpluswc}{D^+_{w,\hspace{0,5mm}c}(X)}
\newcommand{\Dsternwc}{D^*_{w,\hspace{0,5mm}c}(X)}
\newcommand{\Dbc}{D^b_c(X)}
\newcommand{\Dsternc}{D^*_c(X)}
\newcommand{\mc}{\mathcal}
\newcommand{\Hom}{\operatorname{Hom}}
\newcommand{\End}{\operatorname{End}}
\newcommand{\Ext}{\operatorname{Ext}}
\newcommand{\Staralpha}{\text{Et}_\Str}
\newcommand{\Starbeta}{\text{Et}_\Strr}
\renewcommand{\int}{int}
\newcommand{\Rep}{\operatorname{Rep}}
\newcommand{\Repee}{\operatorname{Rep_f}}
\newcommand{\Sh}[1]{\operatorname{Sh}(#1)} %Argument: Raum
\newcommand{\sh}[1]{\operatorname{Sh_f}(#1)} %Argument: Raum
\newcommand{\Ring}{R}
\newcommand{\field}{\BK}
\newcommand{\exponent}{q}
\newcommand{\exponentzwei}{q}
\newcommand{\Kugelschale}{S^2}
\newcommand{\nKugelschale}{S^n}
\newtheorem{theorem}[equation]{Theorem}
\newtheorem{proposition}[equation]{Proposition}
\newtheorem{corollary}[equation]{Corollary}
\newtheorem{lemma}[equation]{Lemma}
\theoremstyle{definition}
\newtheorem{definition}[equation]{Definition}
\newcommand{\he}[1]{h^{\hspace{-0,1mm}e}_{{#1}}}
\newcommand{\hp}[1]{h^{\hspace{-0,1mm}\footnotesize{p}}_{#1}}
\begin{document}

\title[Formality of the constructible derived category for spheres]{Formality of the constructible derived category for spheres:
A combinatorial and a geometric approach}
\author{Anne Balthasar}
\address{%
\hspace{-0.51cm} Anne Balthasar, London School of Economics,
Department of Mathematics, Houghton St, London WC2A 2AE, E-mail
address: a.v.balthasar@lse.ac.uk} \keywords{Formality of dg
algebras, constructible derived category} \subjclass[2000]{16E45,
18E30, 18F20}
%\date{\today}

\begin{abstract}
We describe the constructible derived category of sheaves on the
$n$-sphere, stratified in a point and its complement, as a dg module
category of a formal dg algebra. We prove formality by exploring two
different methods: As a combinatorial approach, we reformulate the
problem in terms of representations of quivers and prove formality
for the 2-sphere, for coefficients in a principal ideal domain. We
give a suitable generalization of this formality result for the
2-sphere stratified in several points and their complement. As a
geometric approach, we give a description of the underlying dg
algebra in terms of differential forms, which allows us to prove
formality for $n$-spheres, for real or complex coefficients.
\end{abstract}

\maketitle
%\tableofcontents

\section{Introduction}

%Denote by $S^n$ the $n$-sphere and by $D^b(S^n)$ the derived
%category of sheaves of $\Ring$-modules on $S^n$, where $\Ring$ is a
%principal ideal domain. Let $S^n$ be stratified in a point and its
%complement. A sheaf of $\Ring$-modules on $S^n$ is called
%constructible if it is locally constant along those two strata, and
%the stalks are finitely generated. The constructible derived
%category $D^b_c(S^n)$ is the full triangulated subcategory of
%$D^b(S^n)$ consisting of bounded complexes of sheaves with
%constructible cohomology.

Let $X$ be a stratified topological space. A sheaf of modules over a
principal ideal domain $\Ring$ on $X$ is called constructible if it
is locally constant along the strata, and the stalks are finitely
generated. Let $D^b(X)$ be the bounded derived category of sheaves
of $\Ring$-modules on $X$. The constructible derived category
$D^b_c(X)$ is the full triangulated subcategory of $D^b(X)$
consisting of bounded complexes of sheaves with constructible
cohomology.

In this paper we give an algebraic description of $D^b_c(X)$ in
terms of dg module categories. We would like to prove for the
special case of the $n$-spheres that the corresponding dg algebra is
formal, i.e. quasi-equivalent to a dg algbra with trivial
differential. For this purpose, we give general combinatorial and
geometric descriptions of $D^b_c(X)$, which we then use to prove the
desired formality results.

The combinatorial approach relies on a generalization of a result of
\cite{KS}, who showed that for a simplicial complex $\mc{K}$, we
have an equivalence of categories
$$
D^b_c(\mc{K}) \simeq D^b(\operatorname{Sh}_c(\mc{K}))
$$
where $\operatorname{Sh}_c(\mc{K})$ denotes the category of
constructible sheaves on $\mc{K}$ (with respect to the natural
stratification). We prove a similar statement for more general
stratifications, which we call acyclic. As a consequence, we get the
following combinatorial description of $D^b_c(X)$:

\begin{theorem}\label{main3}(cf. Theorem \ref{quivers})
To every acyclic stratification on a topological space $X$ of finite
homological dimension we can assign a canonical quiver $Q$ with
relations $\rho$, such that we get a natural equivalence
$$
\Dbc \sira D^b(\Repee(Q,\rho))
$$
where $\Repee(Q,\rho)$ denotes the category of representations of
$(Q,\rho)$ with finitely generated stalks.
\end{theorem}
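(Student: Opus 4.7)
The plan is to deduce Theorem \ref{main3} by combining two ingredients. First, the preceding generalization of the Kashiwara--Schapira equivalence provides, for any acyclic stratification, a natural equivalence $\Dbc \sira D^b(\XCons)$. The remaining content is therefore to produce a canonical quiver with relations $(Q,\rho)$ together with an equivalence of abelian categories $\XCons \sira \Repee(Q,\rho)$, and then to check that the induced functor on bounded derived categories is again an equivalence.

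For the construction of $(Q,\rho)$, I would take the vertices of $Q$ to be the strata of $X$ and draw one arrow for each covering relation in the poset of strata ordered by $\Str \leq \Strr$ iff $\Str \subseteq \ClStrr$. To a constructible sheaf $\mc{F}$ one associates, for each stratum $\Str$, a stalk $M_\Str$ of $\mc{F}|_\Str$, together with the specialization map $M_\Strr \to M_\Str$ attached to each arrow $\Str \to \Strr$. The acyclicity hypothesis is precisely what allows $\mc{F}|_\Str$ to be recovered from a single stalk (trivial local monodromy along strata, vanishing of higher local cohomology on basic neighbourhoods), which makes this assignment functorial and fully faithful; essential surjectivity amounts to gluing sheaves from compatible specialization data, again resting on acyclicity. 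The relations $\rho$ are the commutativity relations imposed on compositions of specialization maps along chains in the poset. The condition of finitely generated stalks is preserved on the nose, yielding the equivalence $\XCons \sira \Repee(Q,\rho)$.

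To pass to bounded derived categories, I would invoke the general principle that an equivalence of abelian categories induces an equivalence of their bounded derived categories, so $D^b(\XCons) \sira D^b(\Repee(Q,\rho))$. The finite homological dimension hypothesis enters to ensure that both sides are well-behaved---in particular that the bounded derived category of $\Repee(Q,\rho)$ is the correct ambient for the equivalence, and that the composite with the Kashiwara--Schapira-type equivalence stays within $D^b$ rather than escaping to $D^+$ or $D^-$.

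The main obstacle is the abelian equivalence itself: one must extract precisely from the acyclicity condition the statement that a constructible sheaf on $X$ is determined by a module at each stratum together with specialization maps along covering relations, with no additional local data required. Constructing $\rho$ canonically from the topology, so that it records exactly the compositions forced by the poset structure and nothing more, is the delicate combinatorial step. Once this is in place, the passage to bounded derived categories is formal given finite homological dimension.
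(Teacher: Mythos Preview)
Your outline is correct in spirit and lands at the same destination, but the paper organises the argument differently, and the difference is worth noting.

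You take as your first black box an equivalence $\Dbc \sira D^b(\XCons)$ and then spend the effort on an abelian equivalence $\XCons \simeq \Repee(Q,\rho)$ constructed directly from stalks and specialization maps. The paper does not prove $\Dbc \simeq D^b(\XCons)$ as a standalone step. Instead it introduces the finite quotient space $\mc{S}$ (the set of strata with the final topology under the projection $p:X\to\mc{S}$) and runs the derived argument there: acyclicity says precisely that constructible sheaves are $p_*$-acyclic and that $p^*p_*\to\mathrm{id}$ is an isomorphism on stalks, so the adjoint pair $(Rp_*,p^*)$ gives $\Dbc \simeq D^b_{\sh{\mc{S}}}(\Sh{\mc{S}})$; a separate comparison then identifies this with $D^b(\sh{\mc{S}})$. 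Only at the very end does the paper make the elementary observation that sheaves on the finite poset $\mc{S}$ are the same as representations of the associated quiver with commutativity relations. What your proposal packages as ``the preceding generalization'' is thus exactly the content the paper is establishing here, via $\mc{S}$.

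Two smaller points. First, the paper draws an arrow $S\to T$ for \emph{every} relation $S\subset\ClStrr$, not only for covering relations, and imposes the relation that all paths with the same endpoints agree; this gives the same representation category as yours, but is the paper's canonical choice. Second, your maps go in the wrong direction: for $S\subset\ClStrr$ the natural map between stalks of a constructible sheaf is the restriction $\Gamma(\Staralpha,\mc{F})\to\Gamma(\Starbeta,\mc{F})$, i.e.\ $M_S\to M_T$, not $M_T\to M_S$. This is easily fixed and does not affect the strategy.
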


For the geometric approach, we give a description of $D^b_c(X)$ in
terms of differential forms, for the case that $X$ is a
differentiable manifold, stratified in a point and its complement,
and the base ring is the field $\field$ of real or complex numbers.
Inspired by results of \cite{DGMS}, who proved that the de Rham
algebra $\deRham$ on a K{\"a}hler manifold is formal, we define a
suitable ``extended'' de Rham algebra $\mc{M}(\deRham)$, for which
we get the following characterization of $D^b_c(X)$:

\begin{theorem}\label{main4}(cf. Theorem \ref{deRhamTheorem})
Let $X$ be a second countable differentiable manifold and $\pt$ a
point in $X$ such that $X\backslash\pt$ is simply connected. We have
an equivalence of triangulated categories
$$
\Dbc \simeq \mc{D}^f_{\mc{M}(\deRham)}
$$
where $\mc{D}^f_{\mc{M}(\deRham)}$ is a suitable subcategory of the
derived dg module category $(\mc{D}_{\mc{M}(\deRham)})^\circ$.
\end{theorem}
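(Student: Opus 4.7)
The plan is to realize $\Dbc$ as a dg module category by identifying a suitable compact generator together with its derived endomorphism dg algebra. Since $X\setminus\pt$ is simply connected and the two-stratum structure (the point and its complement) is acyclic in the sense of Theorem~\ref{main3}, there are only finitely many indecomposable simple constructible sheaves, assembled from the constant sheaves on the two strata; collecting them in suitable injective or projective resolutions into a single object $G\in\Dbc$ furnishes a generator. The first step is then the standard Morita-theoretic argument: the functor $\RHom(G,-)$ induces an equivalence between $\Dbc$ and the full triangulated subcategory $\mc{D}^f_A$ of the derived dg module category of $A := \RHom(G,G)$ consisting of objects with finitely generated cohomology, the ``$f$''-decoration encoding the finiteness-of-stalks condition already present in Theorem~\ref{main3}.

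Next I would identify $A$ with $\mc{M}(\deRham)$ up to quasi-isomorphism. The algebra $A$ has a canonical block decomposition along the two idempotents coming from the two strata: the diagonal blocks compute the cohomology of $\pt$ (namely $\field$ in degree zero) and the cohomology of $X\setminus\pt$, while the off-diagonal bimodule block encodes the generization morphism together with its higher homotopies. By the de Rham theorem each of these pieces admits a differential-forms model: $\field$ for the point and $\deRham(X\setminus\pt)$ for the open stratum (equivalently the de Rham algebra of a link of $\pt$, via homotopy invariance and a tubular-neighbourhood argument). The extended de Rham algebra $\mc{M}(\deRham)$ is by construction the assembly of these pieces with the correct bimodule between them, so that the identification $A \simeq \mc{M}(\deRham)$ amounts to exhibiting a quasi-isomorphism of two dg algebras whose diagonal cohomologies and off-diagonal bimodule blocks match.

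The main obstacle I expect is getting the off-diagonal piece right, i.e.\ producing at the level of differential forms a concrete model of the generization morphism — for instance via a mapping-cylinder on the restriction $\deRham(X)\to\deRham(X\setminus\pt)$ — that recovers the correct $\Ext$ groups between the chosen generators and intertwines the natural bimodule actions with cup product. This is where the hypotheses enter: second countability provides honest partitions of unity and hence a Mayer--Vietoris comparison along a puncturing cover of $\pt$, while simple connectedness of $X\setminus\pt$ prevents nontrivial local systems from appearing and so keeps the generator and its endomorphism algebra small enough for the comparison to be carried out. Once the quasi-isomorphism $\mc{M}(\deRham)\sira A$ is established, transport of structure along it delivers the desired equivalence $\Dbc \simeq \mc{D}^f_{\mc{M}(\deRham)}$, with the finiteness subcategory on the right matching the constructibility and finite-generation conditions on the left.
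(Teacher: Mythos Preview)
Your overall architecture is right: one does start from the equivalence $\Dbc\simeq\mc{D}^f_{\End I}$ for $I$ an injective resolution of a generator, and then compares $\End I$ with $\mc{M}(\deRham)$ via a chain of quasi-equivalences. But there is a genuine gap in the comparison step, and a factual slip along the way.

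First the slip: the two-stratum decomposition into $\pt$ and $X\setminus\pt$ is \emph{not} acyclic in the sense of Theorem~\ref{main3}. The star of the point is all of $X$, and already for $X=S^2$ the constant sheaf has $H^2(S^2)\neq 0$. This does not sink your argument, since you only need simple connectedness of $X\setminus\pt$ to control the generators, but the appeal to Theorem~\ref{main3} is misplaced.

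The real gap is in identifying $A=\End I$ with $\mc{M}(\deRham)$. The paper's $\mc{M}(\deRham)$ is not assembled from ``$\field$ for the point and $\deRham(X\setminus\pt)$ for the open stratum''. The generators used are the skyscraper at $\pt$ and the constant sheaf on \emph{all} of $X$ (not on the open stratum), and the central geometric idea---which your proposal does not contain---is that a single point is too small to support any interesting differential forms. One must first \emph{blow up} $\pt$ to a closed ball $D$: one proves that $\End(I\oplus i_{\pt!}\cs\field\pt)$ is quasi-equivalent to $\End(I\oplus i_!J)$ for $J$ an injective resolution of $\cs\field D$ (this uses that $X\setminus D\hookrightarrow X\setminus\pt$ is a cohomology isomorphism). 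Only after this replacement does a de~Rham model become available, and $\mc{M}(\deRham)$ is then the explicit $2\times 2$ matrix algebra
\[
\begin{pmatrix} \Gamma\deRham & \Gamma o_!o^*\deRham \\ \Gamma i_!i^*\deRham & \Gamma i_!i^*\deRham \end{pmatrix}
\]
built from forms on $X$, forms on $D$, and compactly supported forms on $\int(D)$, with multiplication by wedge product. Your mapping-cylinder suggestion on $\deRham(X)\to\deRham(X\setminus\pt)$ does not produce this object, and without the blow-up step there is no honest dg algebra of differential forms sitting over the point to compare with. The off-diagonal verification you flag as the ``main obstacle'' is indeed delicate, but it is a computation with compactly supported cohomology of the ball (and Poincar\'e duality), not with a mapping cylinder over the punctured manifold.
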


We use those two descriptions of $D^b_c(X)$ to prove formality for
the $n$-sphere $S^n$, stratified in a point and its complement. For
this, let $\mc{L}_1$ be the skyscraper at the point of the
stratification, and $\mc{L}_2=\cs\Ring{S^n} [\lfloor n/2\rfloor]$
the constant sheaf on the $n$-sphere shifted by $\lfloor
n/2\rfloor$. For $\mc{L} = \mc{L}_1\oplus\mc{L}_2$, consider the dg
algebra $\Ext(\mc{L})$ of self-extensions of $\mc{L}$ in $D^b(S^n)$,
with trivial differential. Let $\mc{D}_{\Ext(\mc{L})}$ be the
derived category of dg modules over $\Ext(\mc{L})$ as defined in
\cite{BL94}, and $\mc{D}^f_{\Ext(\mc{L})}$ the full triangulated
subcategory of $(\mc{D}_{\Ext(\mc{L})})^\circ$ generated by the dg
modules $\mc{L}p_1$ and $\mc{L} p_2$, for $p_i:\mc{L}\ra\mc{L}_i$
the projections. We prove the following formality results:
\begin{theorem}\label{main1} \emph{(cf. Theorems
\ref{formalityfornspheres}, \ref{onepointstratification})}
\begin{enumerate}
\item[(i)] For $\Ring$ any principal ideal domain, we have an equivalence
$$D^b_c(S^2) \simeq \mc{D}^f_{\Ext(\mc{L})}$$
\item[(ii)] If the base ring $\Ring$ is $\BR$ or $\BC$, we have for
$n\geq 2$ an equivalence
$$D^b_c(S^n) \simeq \mc{D}^f_{\Ext(\mc{L})}$$
\end{enumerate}
\end{theorem}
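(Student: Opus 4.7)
The plan is to view $\mc{D}^f_{\Ext(\mc{L})}$ as the ``formal model'' of $D^b_c(S^n)$: since $\mc{L} = \mc{L}_1 \oplus \mc{L}_2$ is a classical generator of $D^b_c(S^n)$ for the point/complement stratification, the standard tilting formalism (Keller) identifies $D^b_c(S^n)$ with the triangulated subcategory of $\mc{D}_{R\End(\mc{L})}$ generated by the summands $\mc{L}p_i$. In both (i) and (ii) the theorem therefore reduces to producing a concrete dg-algebra model $A$ of $R\End(\mc{L})$ and proving its formality, i.e.\ constructing a zig-zag of quasi-isomorphisms between $A$ and its cohomology $\Ext(\mc{L})$ endowed with zero differential.

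For part (i), I would invoke Theorem \ref{quivers}: the stratification of $S^2$ in $\pt$ and its complement is acyclic, yielding an equivalence $D^b_c(S^2) \simeq D^b(\Repee(Q,\rho))$ for an explicit quiver with relations. Here $Q$ has one vertex per stratum; the arrows and relations are dictated by the link $S^1$ of $\pt$ and the gluing of the open $2$-cell, and can be computed directly over any principal ideal domain $\Ring$. I would then realise $\mc{L}_1$ and $\mc{L}_2$ as specific representations of $(Q,\rho)$, verify by direct computation that $\Ext^*(\mc{L},\mc{L})$ in the representation category agrees with the graded algebra $\Ext(\mc{L})$, and invoke a tilting-type argument to transport $D^b(\Repee(Q,\rho))$ to $\mc{D}^f_{\Ext(\mc{L})}$. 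Triviality of the differential on the target comes for free because the equivalence is built from a bounded abelian category, in which only graded Ext-data survives.

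For part (ii), I would apply Theorem \ref{deRhamTheorem}, valid because $S^n\setminus\pt \simeq \BR^n$ is simply connected for $n \geq 2$, obtaining $D^b_c(S^n) \simeq \mc{D}^f_{\mc{M}(\deRham)}$. The remaining step is the formality of the ``extended'' de Rham algebra $\mc{M}(\deRham)$ over $\field = \BR$ or $\BC$. Following \cite{DGMS} in spirit, my approach would exploit the rotational symmetry of $S^n$: the subcomplex of $SO(n+1)$-invariant forms already carries the zero differential and realises $H^*(S^n)$, producing formality of $\deRham$ itself. One would then extend this to $\mc{M}(\deRham)$ by picking harmonic/invariant representatives for the extra components arising from the inclusion $\iota : \pt \hookrightarrow S^n$, chosen compatibly with the multiplicative structure on $\mc{M}(\deRham)$, and so obtaining a zig-zag of quasi-isomorphic dg algebras terminating at $\Ext(\mc{L})$.

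The principal obstacle lies in this last extension step in part (ii): the $dd^c$-lemma which drives the DGMS argument is unavailable for $S^n$ when $n \neq 2$, so formality cannot be imported verbatim and the zig-zag must be built by hand, with particular care needed to match the multiplicative structure involving the new generators at the point stratum. In part (i) the obstacles are more of a bookkeeping nature: pinning down $(Q,\rho)$ precisely, ensuring that over a general PID $\Ring$ the equivalence $D^b(\Repee(Q,\rho)) \simeq \mc{D}^f_{\Ext(\mc{L})}$ survives in the presence of torsion, and tracking the finiteness condition built into $\Repee$ through the tilting equivalence.
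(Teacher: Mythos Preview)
Your proposal has two genuine gaps. First, in part (i) you claim that the stratification of $S^2$ into $\pt$ and its complement is acyclic and then invoke Theorem \ref{quivers}. It is not: the star of $\pt$ is all of $S^2$ (since $\pt$ lies in the closure of the open stratum), and the constant sheaf has $H^2(S^2,\cs\Ring{S^2})=\Ring\neq 0$, so already the simplest constructible sheaf fails $\mc{S}$-acyclicity. The paper handles this by passing to a \emph{refinement} $\mathfrak{A}_2$ (two points, two arcs, two hemispheres), which is acyclic, then translating to representations of the associated six-vertex quiver, building an explicit resolution of $W\oplus C[1]$ by objects $I_S$ with vanishing higher Ext, and computing $\End J$ by hand to exhibit a quasi-isomorphism $H\mc{E}\to\mc{E}$. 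Your sentence ``triviality of the differential on the target comes for free because the equivalence is built from a bounded abelian category'' is not an argument for formality: an equivalence $D^b_c(S^2)\simeq D^b(\mc{A})$ says nothing about whether the dg endomorphism algebra of a generator is formal, which is precisely what must be established.

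Second, in part (ii) your plan to use $SO(n{+}1)$-invariant forms is incompatible with the structure of $\mc{M}(\deRham)$: that algebra is built from a fixed closed ball $D$ around $\pt$, and the components $\Gamma o_!o^*\deRham$ and $\Gamma i_!i^*\deRham$ have no nontrivial $SO(n{+}1)$-invariants (any rotation moves $D$). The paper's proof of Theorem \ref{formalityfornspheres} is both more elementary and more direct than a DGMS-style argument: one picks a closed $n$-form $\omega$ supported in $\int(D)$ generating $H^n(S^2)$, a primitive $\tau$ for $\omega|_D$, and writes down by hand a finite-dimensional subalgebra $\mc{U}\subset\mc{M}(\deRham)$ spanned by $1,\omega,\tau,\omega|_D$ in the appropriate matrix entries; the inclusion $\mc{U}\hookrightarrow\mc{M}(\deRham)$ and the projection $\mc{U}\twoheadrightarrow H(\mc{U})$ are then both quasi-isomorphisms. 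No Hodge theory or $dd^c$-lemma enters.
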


The main step in the proof of this result is to take an injective
resolution $I$ of $\mc{L}$ and prove that $\End I$ is a formal dg
algebra. This argument can be generalized to the following setting:
\begin{theorem}\label{main2} \emph{(cf. Theorem
\ref{npointstratification})} For the 2-sphere $\Kugelschale$
stratified in $m$ points and their complement, let
$\mc{L}_1,\dots,\mc{L}_m$ be the skyscrapers at the points of the
stratification, and $\mc{L}=\cs\Ring{S^2}[1]$ the constant sheaf
shifted by 1. For any injective resolution $I$ of
$\mc{L}_1\oplus\dots\oplus \mc{L}_m\oplus\mc{L}$, the corresponding
dg algebra $\End I$ is formal.
\end{theorem}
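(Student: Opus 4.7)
The plan is to adapt the formality argument used for the one-point stratification in the proof of Theorem~\ref{main1}(i), exploiting the pairwise disjoint supports of the skyscrapers $\mc{L}_i$ to decouple the contributions near the different points $p_i$.

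First I would compute $E := H^*(\End I) = \Ext^*_{D^b(\Kugelschale)}(\mc{L} \oplus \bigoplus_i \mc{L}_i)$ explicitly. Using $i_{p_i}^*\cs{\Ring}{S^2} = \Ring$ and $i_{p_i}^!\cs{\Ring}{S^2} = \Ring[-2]$, one finds that $E$ is concentrated in degrees $0,1,2$, with generators: the idempotents $e, e_1, \ldots, e_m$; degree-$1$ classes $a_i \in \Ext^1(\mc{L}, \mc{L}_i)$ coming from restriction to $p_i$; degree-$1$ classes $b_i \in \Ext^1(\mc{L}_i, \mc{L})$ coming from the local cohomology class at $p_i$; and a single degree-$2$ class $c \in \Ext^2(\mc{L}, \mc{L}) \cong H^2(\Kugelschale, \Ring)$, the fundamental class of $\Kugelschale$. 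The nontrivial multiplicative relations are $a_i b_j = 0$ and $b_i a_j = \delta_{ij} c$.

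Next I would choose an injective resolution $I = I_0 \oplus \bigoplus_{i=1}^m I_i$ in which each $I_i$ for $i \geq 1$ resolves $\mc{L}_i$ by injectives of the form $(i_{p_i})_* Q^\bullet$ (hence concentrated at the point $p_i$), while $I_0$ is an injective resolution of $\mc{L}$. Since the points $p_i$ are pairwise disjoint, the sheaves $\mc{H}om(I_i, I_j)$ vanish identically for $i \neq j$ (with $i, j \geq 1$), so the corresponding blocks of the matrix dg algebra $\End I$ are zero, and any choice of cocycle representatives for the generators automatically satisfies $a_i b_j = 0$ and $b_i a_j = 0$ for $i \neq j$ as cochain identities. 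I would fix such cocycle representatives, in particular setting $c := b_1 a_1 \in \End(I_0)$.

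The main obstacle is then to enforce the remaining relations $a_i b_i = 0$ (for every $i$) and $b_i a_i = c$ (for every $i \geq 2$) as cochain identities, not merely in cohomology. In both cases the failure is a coboundary: the cocycle $a_i b_i$ represents zero in $\Ext^2(\mc{L}_i, \mc{L}_i) = 0$, and both $b_i a_i$ and $c$ represent the fundamental class. I would correct these by perturbing the cocycle $b_i$ by $d\eta_i$ for an appropriate $\eta_i : I_i \to I_0$ of degree zero supported near $p_i$. Because $\eta_i$ is supported in the $i$-th summand, the perturbation leaves all previously established relations untouched, and the existence of a suitable $\eta_i$ reduces to a local surjectivity statement that can be checked from the explicit form of the resolution $I_i$ near $p_i$, exactly as in the one-point case of Theorem~\ref{main1}(i). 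Once all relations hold on the nose, the chosen cocycles span a sub-dg-algebra of $\End I$ with trivial differential, tautologically isomorphic to the graded algebra $E$, and the inclusion $E \hookrightarrow \End I$ is the desired formality quasi-isomorphism.
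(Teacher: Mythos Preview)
Your proposal has a genuine gap at the step where you claim to enforce the relation $b_i a_i = c$ for $i\geq 2$ by perturbing $b_i$. With your choice of resolution, each $I_i$ (for $i\geq 1$) is a complex of skyscrapers at $p_i$, so any cochain factoring through $I_i$---in particular $b_i a_i : I_0 \to I_i \to I_0$ and $\eta_i a_i : I_0 \to I_i \to I_0$---restricts to zero on every open set not containing $p_i$. Hence $b_i a_i + d(\eta_i a_i)$ vanishes on $\Kugelschale\setminus\{p_i\}$. On the other hand $c := b_1 a_1$ vanishes on $\Kugelschale\setminus\{p_1\}$. If these two cochains were equal, their common value would vanish on the open cover $\{\Kugelschale\setminus\{p_1\},\,\Kugelschale\setminus\{p_i\}\}$ of $\Kugelschale$ and hence be the zero morphism of sheaf complexes; but $c$ represents the nonzero fundamental class, so $c\neq 0$. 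Thus no $\eta_i$ works, and perturbing $a_i$ instead does not help, since $\zeta_i a_i$ and $b_i\zeta_i$ are again supported at $p_i$. The very disjointness of supports that gives you the relations $a_i b_j = 0$ and $b_i a_j = 0$ for $i\neq j$ for free is exactly what prevents the different local fundamental classes $b_i a_i$ from ever coinciding at the cochain level.

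This obstruction is not an artifact of your model. The paper, working in its quiver resolution $J$, runs into precisely the same phenomenon and states explicitly that for $m\geq 2$ there is \emph{no} system of cocycle representatives of $H(\End J)$ closed under multiplication, hence no direct quasi-isomorphism $H(\End J)\to \End J$. Its remedy is a genuine zig-zag: first pass to a carefully chosen sub-dg-algebra $\mc{U}\subset \End J$ whose inclusion is a quasi-isomorphism, then quotient by an acyclic two-sided ideal $\mc{I}$ that identifies the distinct degree-$2$ representatives of the fundamental class, and only then lift $H(\mc{U}/\mc{I})\hookrightarrow \mc{U}/\mc{I}$. Your one-point argument extends to giving the relations $a_i b_i=0$, but to handle the relations $b_i a_i=c$ you will need an analogous intermediate quotient; a single perturbation step cannot produce a sub-dg-algebra with trivial differential.
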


The idea of describing $D^b_c(S^n)$ as a dg module category of a
formal dg algebra goes back to an analogous conjecture in the
corresponding equivariant setting. For $X$ a projective variety on
which a complex reductive group $G$ acts with finitely many orbits,
let $D^b_{G,c}(X)$ be the equivariant constructible derived category
as introduced in \cite{BL94}. In \cite{Soergel01} it has been
conjectured implicitely that $D^b_{G,c}(X)$ is equivalent to
$\mc{D}^f_{\mc{E}}$, where $\mc{E}$ is the equivariant extension
algebra on $X$, i.e. a dg algebra with trivial differential. So far,
this conjecture has been proved in special cases
(\cite{Lunts},\cite{Guillermou}); a proof for flag varieties is due
to appear in \cite{Olaf}.

Our above-mentioned results explore in how far the equivariant case
can be translated to the non-equivariant setting. For the proofs of
Theorems \ref{main1}(i) and \ref{main2} we use the combinatorial
description of $D^b_c(X)$ given in Theorem \ref{main3}. This method
works for coefficients in any principal ideal domain. For the proof
of Theorem \ref{main1} (ii) we exploit Theorem \ref{main4}, hence it
only works for coefficients in $\BR$ or $\BC$.

Our paper is organized as follows: Section \ref{Kap2} is a summary
of relevant results on the constructible derived category and dg
module categories. In section \ref{Kap3}, we describe the bounded
constructible derived category $\Dbc$, for $X$ a differentiable
manifold stratified in a point and its complement, using
differential forms. This characterization allows us to prove
formality for $n$-spheres with this specific stratification. In
section \ref{Kap4} we introduce acyclic stratifications and give a
combinatorial description of $D^b_c(X)$ in terms of representations
of quivers. Using this description, we prove in section \ref{Kap5}
formality for the 2-sphere, stratified in several points and their
complement.

This paper is a condensed version of my diploma thesis
\cite{Balthasar}, which I wrote in Freiburg in 2005. I am deeply
indebted to Wolfgang Soergel for his support and many fruitful
discussions, and to Olaf Schn{\"u}rer for lots of helpful comments.

\section{The constructible derived category as a dg module
category}\label{Kap2}

\subsection{Conventions}\label{Conventions}
We fix a commutative unitary Noetherian ring $\Ring$ of finite
homological dimension. By a sheaf on a topological space $X$ we
always mean a sheaf of modules over $\Ring$. We denote the category
of such sheaves by $\Sh X$, and by $\sh X$ the full subcategory of
sheaves with finitely generated stalks. As usual, we write $D^b(X)$
for the bounded derived category of sheaves of $\Ring$-modules on
$X$.

We denote the constant sheaf with stalk $\Ring$ by $\cs\Ring X$. For
a continous map $f: X\ra Y$ we write $f^*$ for the inverse image
functor, $f_*$ for the direct image and $f_!$ for the direct image
with compact support (if it exists). In this paper we are mostly
going to use locally closed inclusions and would like to recall the
following properties (cf. \cite{Iversen}): If $f$ is a locally
closed inclusion then $f_!$ is exact and has a right adjoint
$f^{(!)}: \Sh X \ra \Sh Y$, which is left exact and preserves
injectives; its right derived $Rf^{(!)}$ yields the right adjoint of
$f_!$ on the corresponding derived categories and is denoted by
$f^!$, as usual. For open inclusions $j$ we have $j^* = j^{(!)}$,
and for closed inclusions $i$, $i_! = i_*$.

\subsection{Stratified spaces and the constructible derived
category}\label{TopologicalConventions} Let $X$ be a topological
space. By a stratification on $X$ we mean a finite stratification
such that
\begin{enumerate}\item[(i)] the strata are simply connected topological
manifolds; \item[(ii)] they are locally closed in $X$; and
\item[(iii)] the closure of any stratum is again a union of
strata.\end{enumerate}

The central objects of interest in this paper are constructible
sheaves and the corresponding derived categories: A sheaf on a
stratified topological space $X$ is called weakly constructible if
its restriction to every stratum is a constant sheaf, and
constructible, if in addition all its stalks are finitely generated.
The usual definition of constructibility is slightly different in
that it requires a sheaf to be locally constant along the strata.
However, since we only allow simply connected strata the two
definitions coincide. We denote the corresponding categories of
constructible and weakly constructible sheaves by $\XCons$ and
$\wXCons$, respectively.

A complex of sheaves is called (weakly) constructible if its
cohomology sheaves are (weakly) constructible. For $*=+$ or $b$, we
denote by $\Dsternc$ the full triangulated subcategories of $D^*(X)$
that consist of constructible complexes, and by $\Dsternwc$ the
corresponding weakly constructible category.

\subsection{The perfect category, quasi-equivalences and formality}
We assume the reader to be familiar with the concepts of dg agebra,
dg module and the corresponding derived categories. Otherwise all
that we need can be found in \cite{BL94}.

For a dg algebra $\mc{A}$, we denote by $\mc{M}_\mc{A}$ the category
of (left) dg modules over $\mc{A}$, and by $\mc{D}_\mc{A}$ the
corresponding derived dg module category. The perfect category
$\Per{\mc{A}}$ is the full triangulated subcategory of
$\mc{D}_\mc{A}$ generated by the direct summands of $\mc{A}$.

A natural question is under which circumstances the perfect
categories of two dg algebras are equivalent. The easiest case is
that of two quasi-isomorphic dg algebras, for which \cite{BL94} have
shown that the corresponding perfect categories are equivalent.
However, there is a more general result, for which we need the
notion of quasi-equivalence. Two dg algebras $\mc{A}$ and $\mc{B}$
are called quasi-equivalent if there is an
$\mc{A}$-$\mc{B}$-dg-bimodule $M$, together with a cycle of degree
zero $c\in M$, such that the cohomology class of $c$ is a basis of
$HM$ as $H\mc{A}$-left- and $H\mc{B}$-right-module. This means that
the morphisms
\begin{eqnarray*}
\mc{A} \hspace{-2mm} &\ra M, \hspace{4mm} a \hspace{-2mm} &\mapsto a\cdot c  \hspace{4mm}\text{ and}\\
\mc{B} \hspace{-2mm} &\ra M, \hspace{4mm} b \hspace{-2mm} &\mapsto
c\cdot b
\end{eqnarray*}
induce isomorphisms on the cohomology level. \cite{Keller} proved
that for two quasi-equivalent dg algebras $\mc{A}$ and $\mc{B}$ the
corresponding perfect categories $\Per{\mc{A}}$ and $\Per{\mc{B}}$
are equivalent.

This result leads us to the definition of formality. Usually a dg
algebra is called formal if it is quasi-isomorphic (in a generalized
sense) to its cohomology algebra, i.e. if we have  a chain of
quasi-isomorphisms
\begin{equation*}
\mc{A} \ra \mc{A}_1 \leftarrow \mc{A}_2 \ra \dots \leftarrow H\mc{A}
\end{equation*}
Considering the result just cited, we would like to introduce a more
general notion of formality: We call a dg algebra $\mc{A}$ formal if
it is (in a generalized sense) quasi-equivalent to its cohomology
algebra, i.e. if we  find dg algbras $\mc{A}=\mc{A}_1, \mc{A}_2,
\dots \mc{A}_{n}= H\mc{A}$ such that $\mc{A}_i$ and $\mc{A}_{i+1}$
are quasi-equivalent. It follows that for a formal dg algebra
$\mc{A}$ the categories $\Per{\mc{A}}$ and $\Per{H\mc{A}}$ will be
equivalent.

\subsection{Derived categories as dg module categories}

We would now like to describe the constructible derived category by
an equivalent dg module category. Before we get started we would
like to remind the reader that for any complex $\mc{F}$ of objects
in an $\Ring$-linear abelian category, the corresponding
endomorphism complex $\End(\mc{F})$ has the structure of a dg
algebra in a natural way. Moreover, if we take two complexes
$\mc{F}$ and $\mc{G}$ and denote by $\Hom(\mc{F},\mc{G})$ the usual
homomorphism complex (as can be found, for example, in \cite{KS}),
$\Hom(\mc{F},\mc{G})$ will be a left dg module over $\End(\mc{G})$
and a right dg module over $\End(\mc{F})$. The following result,
which is due to \cite{Keller}, describes a general method for making
the transition from subcategories of a derived category that are
given by generators, to dg module categories.

\begin{proposition}\label{SoergelsTheorem}
Let $\mc{C}$ be an $\Ring$-linear abelian category, $K^b(\mc{C})$
the category of bounded complexes over $\mc{C}$ and $I$ a bounded
complex of injectives in $\mc{C}$. Then the contravariant functor
\begin{equation}\label{Functor}
\Hom (\cdot,I): K^b(\mc{C}) \ra \mc{M}_{\End I}
\end{equation}
is fully faithful on the full triangulated subcategory of
$D^b(\mc{C})$ that is generated by the direct summands of $I$.
\end{proposition}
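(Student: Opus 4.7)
The plan is to verify fully-faithfulness at the generator $I$ by a Yoneda computation and then propagate across the entire subcategory via dévissage along the triangulated structure. Write $\mc{T}$ for the full triangulated subcategory of $D^b(\mc{C})$ generated by the direct summands of $I$. Since $I$ is a bounded complex of injectives, every object of $\mc{T}$ can be represented by a bounded complex of injectives, and the canonical map $\Hom_{K^b(\mc{C})}(Y, X) \to \Hom_{D^b(\mc{C})}(Y, X)$ is bijective for all $X, Y \in \mc{T}$; this lets us compare $K^b$-morphisms with morphisms of $\End I$-dg modules directly.

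For the base case take $X = I$. Under the functor this becomes $\End I$, the free left $\End I$-dg module of rank one. Evaluation at the identity yields a Yoneda-type identification
$$
\Hom_{\mc{M}_{\End I}}(\End I,\, \Hom(Y, I)) \;=\; H^0(\Hom(Y, I)) \;=\; \Hom_{K^b}(Y, I),
$$
which is the required bijection. For a general direct summand $X = I \cdot p$ cut out by a (homotopy) idempotent $p$ of $I$, the dg module $\Hom(X, I)$ is the corresponding summand of $\End I$ obtained by right multiplication with $p$, and restricting the Yoneda identity to this summand gives bijectivity for $X$ an arbitrary summand of $I$.

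For the dévissage, the contravariant functor $\Hom(\cdot, I)$ sends cones in $K^b(\mc{C})$ to cones of dg modules and commutes with shifts. Consequently, for each fixed $Y \in \mc{T}$ the class of $X \in \mc{T}$ for which the natural map
$$
\Hom_{K^b}(Y, X) \longrightarrow \Hom_{\mc{M}_{\End I}}(\Hom(X, I),\, \Hom(Y, I))
$$
is bijective is closed under shifts, cones and direct summands. As the class contains all summands of $I$, a five-lemma argument extends bijectivity in the $X$-variable to all of $\mc{T}$; running the same argument in the $Y$-variable then yields full faithfulness on $\mc{T}$.

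The main obstacle, as I see it, is bookkeeping the morphisms on the target side: morphism sets in $\mc{M}_{\End I}$ have to be interpreted so that $\Hom(\End I, M) = H^0 M$ rather than the strict degree-zero cocycles of $M$, i.e.\ the functor must be read as landing in the homotopy category of dg modules. Once this convention is in place the Yoneda step collapses to the standard free-module identity and the dévissage is formal; without it, chain homotopies on the source would fail to produce equalities of morphisms on the target.
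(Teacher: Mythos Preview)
The paper does not supply its own proof of this proposition; it simply attributes the result to Keller and proceeds to the corollary. So there is nothing to compare against on the paper's side.

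Your argument is the standard one and is correct: verify the claim at the generator $I$ via the free-module Yoneda identity, pass to direct summands, and then run a d\'evissage in each variable using that $\Hom(\cdot,I)$ is an exact (triangulated) functor. The point you flag in your final paragraph is exactly the one that needs care: as literally written the target $\mc{M}_{\End I}$ cannot be the strict category of dg modules, since a null-homotopic chain map $Y\to Y'$ induces a null-homotopic but generally nonzero map $\Hom(Y',I)\to\Hom(Y,I)$, so the functor is not even well defined on $K^b(\mc{C})$ unless one passes to the homotopy category of dg modules. With that reading your Yoneda step gives $\Hom(\End I,M)=H^0 M$ and the rest is formal. One minor remark: when you say every object of $\mc{T}$ is represented by a bounded complex of injectives, you are implicitly using that homotopy idempotents on bounded complexes of injectives split; this is true but deserves a word.
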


Denote by $\langle I \rangle_\oplus$ the full triangulated
subcategory of $D^b(\mc{C})$ that is generated by the direct
summands of $I$. Following \cite{Lunts}, we denote by
$\mc{D}^f_{\End I}$ the image of $\langle I \rangle_\oplus$ under
the functor (\ref{Functor}). Hence $\mc{D}^f_{\End I}$ is the
subcategory of $(\Per{\End I})^\circ$ generated by those direct
summands of $\End I$ that arise from direct summands of $I$, i.e. by
dg modules of the form $(\End I) \cdot p$, where $p$ is a projector
of $I$. Now Proposition \ref{SoergelsTheorem} can be reformulated as
follows:

\begin{corollary}\label{furuns}
The functor (\ref{Functor}) induces an equivalence $\langle I
\rangle_\oplus\simeq \mc{D}^f_{\End I}$.
\end{corollary}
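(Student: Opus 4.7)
The plan is to deduce the corollary directly from Proposition \ref{SoergelsTheorem} together with the definition of $\mc{D}^f_{\End I}$ given just above. By Proposition \ref{SoergelsTheorem}, the functor (\ref{Functor}) is fully faithful when restricted to $\langle I \rangle_\oplus$, and by definition, $\mc{D}^f_{\End I}$ is precisely the essential image of $\langle I \rangle_\oplus$ under this functor. Since any fully faithful functor restricts to an equivalence onto its essential image, these two ingredients together will already give the desired equivalence $\langle I \rangle_\oplus \simeq \mc{D}^f_{\End I}$.

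The only sanity check I would make along the way is that $\Hom(\cdot, I)$ really does land in $(\mc{D}_{\End I})^\circ$ and that $\langle I \rangle_\oplus$ is sent inside $(\Per{\End I})^\circ$. This is straightforward: $\Hom(I, I) = \End I$ is the free rank-one dg module; a direct summand of $I$ corresponding to a projector $p$ is sent to $(\End I) \cdot p$, which is a direct summand of $\End I$ and hence perfect. Because $I$ is a bounded complex of injectives, $\Hom(\cdot, I)$ sends quasi-isomorphisms in $K^b(\mc{C})$ to quasi-isomorphisms of dg modules, so it descends to a contravariant triangulated functor on $D^b(\mc{C})$. Taking triangulated hulls, the image of $\langle I \rangle_\oplus$ is contained in the triangulated hull of the dg modules $(\End I) \cdot p$ inside $(\Per{\End I})^\circ$, which is exactly $\mc{D}^f_{\End I}$.

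Combining this identification of the essential image with the full faithfulness supplied by Proposition \ref{SoergelsTheorem} yields the claimed equivalence. I do not expect any real obstacle: the substantive content has already been packaged into Proposition \ref{SoergelsTheorem}, and what is left is the purely formal principle that a fully faithful functor is an equivalence onto its essential image, combined with the observation that this essential image matches the intrinsic description of $\mc{D}^f_{\End I}$ as the triangulated subcategory of $(\Per{\End I})^\circ$ generated by the $(\End I) \cdot p$.
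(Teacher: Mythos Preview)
Your proposal is correct and matches the paper's approach: the paper treats this corollary as an immediate reformulation of Proposition \ref{SoergelsTheorem} together with the definition of $\mc{D}^f_{\End I}$, and gives no separate proof. Your argument spells out exactly this, with the additional (and entirely appropriate) sanity checks that the functor descends to the derived category and sends summands of $I$ to the generators $(\End I)\cdot p$ of $\mc{D}^f_{\End I}$.
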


In order to apply this result to $\Dbc$ we need a set of generators
of this category: Denote for any locally closed subset $Z \subset X$
by $\Ring_Z^X$ the constant sheaf on $Z$ extended by zero, i.e. the
sheaf $i_!\cs\Ring Z$, where $i:Z\hra X$ is the inclusion. It is a
well known fact that $\Dbc$ is generated as a triangulated
subcategory of $D^b(X)$ by its heart, i.e. by $\XCons$. This can be
shown using d\'evissage. As a consequence, if our base ring $\Ring$
is a principal ideal domain, $\Dbc$ is generated by
$$\big\{\Ring_\ol{S}^X\left[d(S)\right]\mid S\text{ varies over the
strata}\big\}$$ where $\left[d(S)\right]$ denotes the shift by
$d(S):=\lfloor \frac{\dim S}{2}\rfloor$. If we take the direct sum
of these generators and an injective resolution $I$ of this direct
sum, Corollary \ref{furuns} yields an equivalence
\begin{equation}\label{equivalence}
\Dbc \sira \mc{D}^f_{\End I}
\end{equation}
This is the central result that will allow us to describe the
constructible derived category as a dg module category. However,
injective resolutions are difficult to compute in general. We would
therefore like to allow more general resolutions in Proposition
\ref{SoergelsTheorem}, which leads us to the following result:

\begin{proposition}\label{hinreichendAzyklisch}
Let $\mc{C}$ be an $\Ring$-linear abelian category with enough
injectives. Let $\Omega$ be a bounded complex in $D^b(\mc{C})$ such
that $\Ext^i(\Omega^p,\Omega^q)=0$ for $i>0$ and $p,q\in\BZ$, and
$I$ an injective resolution of $\Omega$ that is bounded from below.
Then $\End\Omega$ and $\End I$ are quasi-equivalent.
\end{proposition}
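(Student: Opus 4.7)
The plan is to take $M := \Hom(\Omega, I)$, which by the discussion in \S2.4 carries a natural structure of left $\End I$- and right $\End\Omega$-dg-bimodule, and to use the resolution quasi-isomorphism $c \colon \Omega \to I$ as the distinguished degree-zero cycle. To establish the quasi-equivalence of $\End\Omega$ and $\End I$ it then suffices to show that the two structure maps
\[
\End I \lra M,\ a \mapsto a \circ c, \qquad \End\Omega \lra M,\ b \mapsto c \circ b,
\]
are both quasi-isomorphisms; this is precisely the ``basis'' condition in the definition of quasi-equivalence recalled in \S2.3.

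The first map is $\Hom(c, I)$, and it is a quasi-isomorphism for the soft reason that $I$, being a bounded-below complex of injectives, is K-injective; hence $\Hom(-, I)$ preserves quasi-isomorphisms. This step does not require the Ext-vanishing hypothesis.

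The second map is $\Hom(\Omega, c)$, and here the Ext-vanishing assumption enters. I would filter both $\End\Omega$ and $\Hom(\Omega, I)$ by the stupid filtration in the first argument, $F^p := \{f \mid f^j = 0 \text{ for all } j \geq p\}$. Since $\Omega$ is bounded, this filtration is finite and is preserved by $\Hom(\Omega, c)$; its associated graded pieces are, up to shift, $\Hom(\Omega^p, \Omega)$ and $\Hom(\Omega^p, I)$, with induced map $\Hom(\Omega^p, c)$. By the standard comparison lemma for maps of finitely filtered complexes it suffices to show that each $\Hom(\Omega^p, c)$ is a quasi-isomorphism. Now $\Hom(\Omega^p, I)$ already computes $R\Hom(\Omega^p, \Omega)$ by K-injectivity of $I$, so its degree-$n$ cohomology is $\Hom_{D(\mc C)}(\Omega^p, \Omega[n])$. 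For the source, I would apply the stupid-filtration spectral sequence in the \emph{second} argument of $\Hom(\Omega^p, \Omega)$: its $E_1$-page $E_1^{a,b} = \Ext^b(\Omega^p, \Omega^a)$ abuts to $\Hom_{D(\mc C)}(\Omega^p, \Omega[a+b])$, and the Ext-vanishing hypothesis collapses it onto the row $b = 0$. The spectral sequence therefore degenerates and yields $H^n\Hom(\Omega^p, \Omega) = \Hom_{D(\mc C)}(\Omega^p, \Omega[n])$, so that $\Hom(\Omega^p, c)$ is a quasi-isomorphism on every cohomology group.

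The main obstacle is precisely this pointwise degeneration: one must make explicit that Ext-vanishing forces the stupid-filtration $E_1$-page onto a single row, so that naive Hom-complex cohomology already coincides with derived $\Hom$. Equivalently, the same claim can be established by induction on the length of $\Omega$ using the split short exact sequences of stupid-truncated subcomplexes and a triangle-wise five-lemma. Once the pointwise statement is in hand, everything else is formal: K-injectivity handles the first structure map, and finite-filtration convergence promotes the pointwise quasi-isomorphisms to the global statement for $\Hom(\Omega, c)$.
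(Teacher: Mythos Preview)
Your proof is correct and follows the same strategy as the paper: both use the bimodule $\Hom(\Omega,I)$ with the resolution map $c$ as distinguished cycle, dispatch the map $a\mapsto a\circ c$ by K-injectivity of $I$, and reduce $b\mapsto c\circ b$ to the pointwise statement that each $\Hom(\Omega^p,c)$ is a quasi-isomorphism. The only difference is packaging: the paper obtains the pointwise statement by citing the standard fact that $RF$ can be computed on $F$-acyclic objects and then passes to the global statement via the mapping cone $M$ of $c$ and the exactness of the product total complex of the double complex $\Hom(\Omega^\bullet,M^\bullet)$, whereas you unwrap the same two steps as a degenerating hypercohomology spectral sequence and a finite-filtration comparison, respectively.
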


\begin{proof}
Let $c:\Omega\ra I$ be a quasi-isomorphism. It is a standard result
of homological algebra that
\begin{equation*}
\End I \ra \Hom(\Omega, I), \hspace{3mm} a \mapsto a \circ c
\end{equation*}
is a quasi-isomorphism (\cite{Iversen}). Now we only need to prove
that
\begin{equation*}
\End \Omega \ra \Hom(\Omega, I), \hspace{3mm} b \mapsto c \circ b
\end{equation*}
is a quasi-isomorphism as well. This statement is a generalization
of the well-known fact that, for a left-exact abelian functor $F$,
its right derived $RF$ can be calculated using $F$-acyclic objects
(\cite{KS}). Using this result we get that for every $p\in\BZ$ the
map
\begin{equation*}
\Hom(\Omega^p, \Omega) \ra \Hom(\Omega^p, I)
\end{equation*}
induced by $c$ is a quasi-isomorphism. Now consider the
distinguished triangle
\begin{equation*}
\Omega \overset{c}\ra I \ra M \overset{[1]}\ra,
\end{equation*}
where $M$ is the mapping cone of $c$. Using the distinguished
triangle
\begin{equation*}
\Hom(\Omega^p, \Omega) \ra \Hom(\Omega^p, I) \ra \Hom(\Omega^p, M)
\overset{[1]}\ra
\end{equation*}
we see that $\Hom(\Omega^p, M)$ must have been an exact complex. If
we can show that $\Hom(\Omega, M)$ is exact, we are done since we
have another distinguished triangle
\begin{equation*}
\Hom(\Omega, \Omega) \ra \Hom(\Omega, I) \ra \Hom(\Omega, M)
\overset{[1]}\ra
\end{equation*}
But $\Hom(\Omega,M)$ is the product total complex of the double
complex $\Hom(\Omega^p, M^q)$, which is bounded from left and below
and whose columns are exact, as seen above. For this kind of double
complexes it has been shown in \cite{KS} that their product total
complex is exact.
\end{proof}

\section{Formality for simple stratifications on manifolds}\label{Kap3}

\subsection{The idea}
Let us consider the simplest stratification possible, the
stratification that consists of one stratum only. Assume that our
space is a simply connected differential manifold. We then know that
for real coefficients, $\Dbc$ is described by the dg algebra $\End
I$, where $I$ is an injective resolution of the constant sheaf
$\cs\BR X$. Using that the functor of global sections on $X$ is
naturally equivalent to the functor $\Hom(\cs\BR X, \cdot)$ we see
that $\End I$ is quasi-isomorphic to $R\Gamma(X,\cs \BR X)$. The
latter complex calculates the cohomology of the space $X$, and it is
a standard argument of cohomology theory that it is quasi-isomorphic
to the complex of global sections of the de Rham complex $\deRhama$,
which in turn in certain circumstances is known to be formal (see,
for instance, \cite{DGMS}).

The de Rham complex is much easier to handle than an arbitrary
injective resolution of the constant sheaf. Hence it appears to be a
sensible idea to describe $\Dbc$ by differential forms even if the
stratification is no longer trivial. We will pursue this idea in
this section and deduce a formality result for $n$-spheres at the
end.

\subsection{A generalized de Rham algebra}
Let $X$ be a second-countable differentiable manifold $X$ (which
implies that it is para-compact), stratified in a point $\pt$ and
its complement. Denote by $i_{\pt}: X\backslash\pt \ra X$ the
inclusion. Assume that our base ring is the field of real or complex
numbers, which we denote by $\field$.

If $X\backslash\pt$ is simply connected, we know that $\Dbc$ is
generated by the skyscraper $i_{\pt !}\cs\field\pt$ and the constant
sheaf on $X$, shifted by $\lfloor\frac{\dim X}{2}\rfloor$. However,
shifting the constant sheaf does not affect any of the arguments in
this section, hence for simplicity of notation we drop the shift,
and use the constant sheaf $\cs\field X$ as generator. We then know
from the equivalence \eqref{equivalence} that $\Dbc$ is described by
the dg algebra $\End(I\oplus i_{\pt !}\cs\field\pt)$, where $I$ is
an injective resolution of $\cs\field X$. Now we would like to find
a quasi-equivalent description of $\End(I\oplus i_{\pt
!}\cs\field\pt)$ using differential forms. The first problem we need
to tackle is that the stratum $\pt$ is too small to allow for
interesting differential forms. Hence we need to replace it by a
small ball to clear some space. Thus for the remainder of this
section, fix a closed ball $D$ around $\pt$, i.e. a small closed
neighbourhood that is mapped by a chart to a closed ball in
$\BR^{\dim X}$. Let $i:D\hra X$ be the inclusion. By abuse of
notation, we use $i_{\pt}$ both for the inclusions $\pt\hra D$ and
$\pt\hra X$.

\begin{proposition}\label{Aufblasen}
Choose an injective resolution $J$ of the constant sheaf $\cs\field
D$. Then the dg algebras $\End(I\oplus i_{\pt !}\cs\field\pt)$ and
$\End(I\oplus i_!J)$ are quasi-equivalent.
\end{proposition}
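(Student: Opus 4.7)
The plan is to exhibit the quasi-equivalence explicitly via a bimodule with a distinguished cycle. Put $\Omega_1:=I\oplus i_{\pt!}\cs\field\pt$ and $\Omega_2:=I\oplus i_!J$. Since $i:D\hra X$ is closed, $i_!=i_*$ preserves injectivity, and the skyscraper $i_{\pt!}\cs\field\pt$ is injective because $\field$ is a field; hence $\Omega_1$ and $\Omega_2$ are bounded complexes of injectives, so that the $\End\Omega_1$-$\End\Omega_2$-dg-bimodule $M:=\Hom(\Omega_2,\Omega_1)$ computes the derived Hom $R\Hom(\cs\field X\oplus i_!\cs\field D,\cs\field X\oplus i_{\pt!}\cs\field\pt)$ in cohomology. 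The distinguished cycle $c\in M^0$ is constructed as follows: the natural quotient map $\cs\field D\tha k_*\cs\field\pt$ (where $k:\pt\hra D$) lifts, using injectivity of $k_!\cs\field\pt=k_*\cs\field\pt$, to a chain map $J\to k_!\cs\field\pt$; applying the exact functor $i_!$ produces a chain map $\rho:i_!J\to i_{\pt!}\cs\field\pt$, and one sets $c:=(\mathrm{id}_I,\rho)$.

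It then remains to verify that $[c]$ is a basis of $HM$ both as a left $H\End\Omega_1$-module and as a right $H\End\Omega_2$-module. Decomposing each action along the direct summands of $\Omega_1$ and $\Omega_2$, the blocks not involving $\rho$ are the identity, and the problem reduces to showing that $\rho$ induces isomorphisms on the four Ext groups
\begin{align*}
R\Hom(\cs\field X,i_!\cs\field D)&\to R\Hom(\cs\field X,i_{\pt!}\cs\field\pt),\\
R\Hom(i_!\cs\field D,i_!\cs\field D)&\to R\Hom(i_!\cs\field D,i_{\pt!}\cs\field\pt),\\
R\Hom(i_{\pt!}\cs\field\pt,i_{\pt!}\cs\field\pt)&\to R\Hom(i_!\cs\field D,i_{\pt!}\cs\field\pt),\\
R\Hom(i_{\pt!}\cs\field\pt,\cs\field X)&\to R\Hom(i_!\cs\field D,\cs\field X).
\end{align*}
The first three are formal: all groups are $\field$ concentrated in degree $0$ (contractibility of $D$ handles the first; the other two are endomorphism rings and a cross-Hom that reduce to $\field$ via the usual $(i^*,i_*)$- and $(k^*,k_*)$-adjunctions together with $i^!i_!=\mathrm{id}$), and $\rho$ visibly sends generators to generators.

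The main obstacle is the last map. Here both source and target are $\field$ concentrated in degree $n=\dim X$: the source by Verdier duality $i_\pt^!\cs\field X\simeq\cs\field\pt[-n]$, and the target by excision, $R\Gamma_D(X;\field)\simeq\tilde H^*(D/\partial D;\field)\simeq\tilde H^*(S^n;\field)$. To handle this case I would fit $\rho$ into the distinguished triangle
$$j_!\cs\field{D\setminus\pt}\to i_!\cs\field D\os{\rho}\to i_{\pt!}\cs\field\pt\os{[1]}\to$$
where $j:D\setminus\pt\hra X$ is the locally closed inclusion, reducing the claim to the vanishing $R\Hom(j_!\cs\field{D\setminus\pt},\cs\field X)=0$. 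Factoring $j$ as the closed inclusion $D\setminus\pt\hra X\setminus\pt$ followed by the open inclusion $X\setminus\pt\hra X$, $j^!\cs\field X$ identifies with the extension by zero to $D\setminus\pt$ of the constant sheaf on $\operatorname{int}(D)\setminus\pt$; a short long-exact-sequence argument then gives the vanishing, since $\partial D\hra D\setminus\pt$ is a deformation retract and hence induces an isomorphism on $H^*$. Assembling these pieces, $[c]$ furnishes the required basis of $HM$, yielding the desired quasi-equivalence.
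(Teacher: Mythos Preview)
Your proof is correct and follows the same overall strategy as the paper: the same bimodule $\Hom(\Omega_2,\Omega_1)$, the same distinguished cycle $c=\bigl(\begin{smallmatrix}\mathrm{id}_I&0\\0&\rho\end{smallmatrix}\bigr)$, and the same reduction to four block checks. The only noteworthy variation is in the hard block $R\Hom(i_{\pt!}\cs\field\pt,\cs\field X)\to R\Hom(i_!\cs\field D,\cs\field X)$: the paper compares the two short exact sequences $j_!j^*\cs\field X\hra\cs\field X\tha i_!i^*\cs\field X$ (for the open complements of $\pt$ and of $D$) via the five lemma, using that $X\setminus D\hra X\setminus\pt$ is a cohomology isomorphism; you instead take the cone of $\rho$ and show $R\Hom(j_!\cs\field{D\setminus\pt},\cs\field X)=0$ via the retraction $\partial D\hra D\setminus\pt$. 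These are equivalent reformulations of the same topological input, so the arguments are essentially interchangeable.
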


Before we dig into the proof, I would like to recall some standard
facts that we are going to use often throughout this section (for
proofs see \cite{Iversen}). The first one we have already mentioned:
The global section functor $\Gamma(X,\cdot)$ of sheaves of
$\Ring$-modules on a topological space $X$ is naturally equivalent
to the functor $\Hom(\cs\Ring X, \cdot)$. Another useful statement
is that, whenever $I$ is a complex of injectives bounded from below,
every quasi-isomorphism of complexes $\mc{F}$ and $\mc{G}$ yields a
quasi-isomorphism between $\Hom(\mc{F},I)$ and $\Hom(\mc{G},I)$.

\begin{proof}
Consider the operations
\begin{equation*}
\xymatrix{\End \big(I \oplus i_{pt !} \cs\field{pt}\big)
\hspace{0mm} & \ar@(dl,ul) & \hspace{-8mm}\Hom \big(I \oplus i_!J, I
\oplus i_{pt !} \cs\field{pt}\big) \hspace{-8mm}&
  \ar@(dr,ur) &  \hspace{0mm} \End\big(I \oplus i_!J\big)}
\end{equation*}
together with the cycle
$$c:= \begin{pmatrix}id_I & 0\\
0 & \gamma\end{pmatrix} \in \Hom \big(I \oplus i_!J, I \oplus i_{pt
!} \cs\field{pt}\big)$$ where $\gamma$ is the natural extension of
the adjunction morphism $\cs\field D \ra i_{\pt!}\cs\field\pt$,
which is first lifted to $J$ using that the skyscraper is injective,
and is then extended by zero.

We need to show that the following maps induce isomorphisms on
cohomology:
\begin{enumerate}
\item $\Hom(i_{pt !} \cs\field{pt}, I) \ra \Hom(i_!J,I), \,a
\mapsto a\circ \gamma$ \item $\End(i_{pt !} \cs\field{pt}) \ra
\Hom(i_!J,i_{pt !} \cs\field{pt}), \,a \mapsto a\circ \gamma$
\item $\Hom(I,i_!J) \ra \Hom(I,i_{pt !} \cs\field{pt}), \,a
\mapsto \gamma\circ a$ \item $\End(i_!J) \ra \Hom(i_!J,i_{pt !}
\cs\field{pt}), \,a \mapsto \gamma\circ a$
\end{enumerate}
Proving this means a lot of diagram chasing, none of which involves
unexpected twists. The proofs rely on the fact that the inclusion
$X\backslash D \hra X\backslash\pt$ induces isomorphisms on
cohomology, which can be seen using Mayer-Vietoris and the five
lemma. We are only going to prove the first statement and leave the
remaining three, which are easier, to the reader. Else a complete
proof can be found in \cite{Balthasar}.

For (1), we need to show that the corresponding morphism
\begin{equation*}
\Ext^\exponent(i_{\pt !} \cs\field{\pt}, \cs\field X) \ra
\Ext^\exponent(i_!\cs\field D, \cs\field X)
\end{equation*}
is an isomorphism for every $\exponent\in\BZ$ (where as usual
$\Ext^q(\cdot,\cdot)$ denotes the $q^\text{th}$ right derived of
$\Hom(\cdot,\cdot)$). Consider the open inclusions $j_{\pt}:
X\backslash\pt \hra X$ and $j:X\backslash D \hra X$ of the
complements of $\pt$ and $D$, respectively. We get a commutative
diagram of short exact sequences
\begin{equation*}
\xymatrix{j_! j^* \cs\field X \ar[r] \ar[d] & \cs\field X
\ar@{=}[d] \ar[r] & i_!i^* \cs\field X \ar[d]\\
j_{\pt !} j_{pt}^* \cs\field X \ar[r] & \cs\field X \ar[r] & i_{\pt
!} i_{\pt}^* \cs\field X}
\end{equation*}
where the vertical morphisms are induced by the adjuntions
$(j_!,j^*)$ and $(i_\pt^*, i_{\pt !})$, respectively. But since the
inclusion $X\backslash D \hra X\backslash\pt$ induces isomorphisms
on cohomology, we get that concatenation with the left vertical
yields isomorphisms
$$\Ext^\exponent(j_{pt !} j_{pt}^*
\cs\field X, \cs\field X) \sira \Ext^\exponent(j_! j^* \cs\field X,
\cs\field X)$$ Applying the functor $\Ext(\cdot,\cs\field X)$ to our
pair of short exact sequences yields the corresponding commutative
ladder, from which the required result follows by applying the five
lemma.
\end{proof}

Next, we would like to describe the dg algebra $\End(I\oplus i_!J)$
using differential forms. Denote by $\deRham$ the sheaf of
$\BK$-valued differential forms on $X$, which is a soft resolution
of the constant sheaf. In addition to the inclusion $i:D\hra X$ we
now need the inclusion $o: \int(D)\hra X$, where $\int(D) $ denotes
the open interior of $D$. To avoid notational inflation we write
$\Gamma\mc{F}$ for the global sections of a sheaf $\mc{F}$ on $X$.
We would now like to consider the differential graded matrix algebra
\begin{equation}\label{deRhamdef}
\mc{M}(\deRham) := \begin{pmatrix} \Gamma\deRham & \Gamma o_!o^*\deRham \\
\Gamma i_!i^*\deRham & \Gamma i_!i^*\deRham\end{pmatrix}
\end{equation}
whose multiplicative structure is induced by the wedge product. How
this generalized wedge product on $\mc{M}(\deRham)$ works should be
clear if we think of $\Gamma o_!o^*\deRham$ as the sections of
$\deRham$ with support in $\int(D)$ and of $\Gamma i_!i^*\deRham$ as
the sections over $D$, which can be extended to $X$ since the sheaf
of differential forms $\deRham$ is soft.

Using the same point of view we see that there is a natural
inclusion
\begin{equation*}
\begin{pmatrix} \Gamma\deRham & \Gamma o_!o^*\deRham \\
\Gamma i_!i^*\deRham & \Gamma i_!i^*\deRham\end{pmatrix} \hra
\begin{pmatrix} \End \deRham & \Hom(i_!i^*
\deRham, \deRham)\\
\Hom(\deRham, i_!i^*\deRham) & \End i_!i^*\deRham \end{pmatrix}.
\end{equation*}
This inclusion is again induced by the wedge product, more precisely
by the map $a\mapsto a\wedge\cdot$. Since this inclusion is
obviously compatible with the multiplicative structures, we have
proven that $\mc{M}(\deRham)$ has a canonical structure of
dg-sub-algebra of
\begin{equation*}\begin{pmatrix} \End
\deRham & \Hom(i_!i^* \deRham, \deRham)\\
\Hom(\deRham, i_!i^*\deRham) & \End i_!i^*\deRham \end{pmatrix} =
\End(\deRham \oplus i_!i^*\deRham).
\end{equation*}

\begin{proposition}\label{deRham}
For any injective resolution $J$ of the constant sheaf $\cs\field
D$, the dg algebra $\mc{M}(\deRham)$ operates as a dg-sub-algebra of
$\End(\deRham \oplus i_!i^*\deRham)$ canonically on $\Hom(\deRham
\oplus i_!i^*\deRham, I \oplus i_!J)$. This operations yields a
quasi-equivalence between $\mc{M}(\deRham)$ and $\End(I \oplus
i_!J)$.
\end{proposition}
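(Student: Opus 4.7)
The plan is to produce a degree-zero cycle $c$ in the bimodule $M := \Hom(\deRham \oplus i_!i^*\deRham, I \oplus i_!J)$, where $\End(I\oplus i_!J)$ acts on the left by post-composition and $\mc{M}(\deRham)$ acts on the right by pre-composition through the sub-algebra embedding $\mc{M}(\deRham) \hra \End(\deRham \oplus i_!i^*\deRham)$ constructed just above the proposition, and to arrange that the cohomology class of $c$ freely generates $HM$ from both sides. I would take $c$ to be block-diagonal with entries $c_1: \deRham \ra I$ and $i_!c_2: i_!i^*\deRham \ra i_!J$, where $c_1$ and $c_2$ are lifts of the augmentations $\cs\field X \hra \deRham$ and $\cs\field D \hra i^*\deRham$ into the injective resolutions $I$ and $J$. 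Since $i$ is closed, $i_!J = i_*J$ remains a bounded-below complex of injectives on $X$, so the left-multiplication map $\End(I\oplus i_!J) \ra M$, $a \mapsto a\circ c$, is a quasi-isomorphism by the standard fact recalled just before Proposition~\ref{Aufblasen}.

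The substantial work is to show that the right-multiplication map $\mc{M}(\deRham) \ra M$, $b \mapsto c\circ b$, is a quasi-isomorphism. Since $c$ is block-diagonal and both $\mc{M}(\deRham)$ and $M$ carry a matching $2\times 2$ block decomposition, this reduces to verifying each of the four induced maps of complexes separately. For the $(1,1)$-entry $\Gamma\deRham \ra \Hom(\deRham, I)$, $\omega \mapsto c_1\circ (\omega\wedge\cdot)$, I would post-compose with the restriction $\Hom(\deRham, I) \ra \Hom(\cs\field X, I) = \Gamma I$ along the augmentation $\cs\field X \hra \deRham$: this restriction is a quasi-isomorphism because $\deRham$ resolves $\cs\field X$ and $I$ consists of injectives, while the composite with our map collapses to $c_1: \Gamma\deRham \ra \Gamma I$, a quasi-isomorphism between two $\Gamma$-acyclic resolutions of $\cs\field X$ (softness of $\deRham$). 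Two-out-of-three then yields the claim, and the $(2,1)$ and $(2,2)$ entries are handled by the same template, using that $i^*\deRham$ is a soft resolution of $\cs\field D$ on $D$ and that $i_!=i_*$ preserves softness and injectivity.

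The main obstacle is the off-diagonal $(1,2)$-entry $\Gamma o_!o^*\deRham \ra \Hom(i_!i^*\deRham, I)$, where one must identify both sides with the compactly supported cohomology $R\Gamma_c(\int(D), \cs\field{\int(D)})$ of the open ball. On the source, $o^*\deRham$ is soft, so $\Gamma o_!o^*\deRham = \Gamma_c(\int(D), o^*\deRham)$ directly computes this. On the target, the analogous restriction along the quasi-isomorphism $i_!\cs\field D \hra i_!i^*\deRham$ together with injectivity of $I$ reduces matters to $\Hom(i_!\cs\field D, I) = \Gamma_D(X, I)$, which computes $\RHom(i_!\cs\field D, \cs\field X)$; applying $\RHom(\cdot, \cs\field X)$ to the distinguished triangle $j_!\cs\field{X\backslash D} \ra \cs\field X \ra i_*\cs\field D \ra [1]$ (with $j: X\backslash D \hra X$ the open complement) realises this as the relative cohomology $R\Gamma(X, X\backslash D;\cs\field X)$, and excision along the closed ball $D$ identifies it with $R\Gamma(D,\partial D; \cs\field D) = R\Gamma_c(\int(D),\cs\field{\int(D)})$. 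A final diagram chase, using the same augmentation factorisation as in the diagonal entries, confirms that $\xi \mapsto c_1\circ(\xi\wedge\cdot)$ induces the expected isomorphism on cohomology, completing the proof.
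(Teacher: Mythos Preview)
Your overall architecture matches the paper's proof exactly: the same block-diagonal cycle $c$, the same reduction to four matrix entries, and the same augmentation-factorisation argument for the diagonal entries (and the two lower entries). The one genuine gap is in your treatment of the $(1,2)$-entry. After the augmentation factorisation, the composite $\Gamma o_!o^*\deRham \ra \Hom(i_!\cs\field D, I)\cong \Gamma i_! i^{(!)} I$ becomes the chain $\Gamma o_!o^*\deRham \xrightarrow{\sim} \Gamma o_!o^* I \ra \Gamma i_!i^{(!)}I$, where the first map is a quasi-isomorphism by softness, so the question is whether the natural inclusion $\Gamma o_!o^* I \hra \Gamma i_!i^{(!)}I$ (sections supported in $\int(D)$ into sections supported in $D$) is a quasi-isomorphism. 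Your excision argument correctly shows that source and target both have cohomology $\field$ concentrated in degree $\dim X$, but this is an identification of each side \emph{separately} with $H^*_c(\int(D))$; it does not show that \emph{this particular inclusion} induces an isomorphism rather than, say, the zero map. The phrase ``a final diagram chase \ldots\ confirms'' is exactly where the substance is missing.

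The paper closes this gap with a concrete trick: pick a smaller closed ball $\hat D \subset \int(D)$ with inclusion $\hat\imath$. Then $\Gamma \hat\imath_!\hat\imath^{(!)} I \ra \Gamma i_! i^{(!)} I$ is a quasi-isomorphism (both compute $\RHom(i_{\pt!}\cs\field\pt,\cs\field X)$ via compatible maps, by the argument of Proposition~\ref{Aufblasen}), and it factors through $\Gamma o_!o^* I$ since $\hat D \subset \int(D) \subset D$. Hence the inclusion $\Gamma o_!o^* I \hra \Gamma i_! i^{(!)} I$ is surjective on cohomology, and therefore an isomorphism in the single nonzero degree. You should either supply this argument or give an explicit naturality diagram tying your excision isomorphism to the inclusion map; as written, the last sentence of your proposal asserts what needs to be proved.
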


\begin{proof}
The morphisms $\cs\field X\hra I$ and $\cs\field D\hra J$ can be
lifted in the homotopy category to maps $\ol\imath:\deRham\hra I$
and $\ol\jmath: i^*\deRham \hra J$. We get a quasi-isomorphism
\begin{equation*} c :=
\begin{pmatrix} \ol\imath & 0
\\ 0 & i_!\ol\jmath\end{pmatrix}\in \Hom(\deRham \oplus i_!i^*\deRham, I
\oplus i_!J)
\end{equation*}
which is going to yield our quasi-equivalence. A standard argument
shows that
\begin{equation*}
\End(I \oplus i_!J) \ra \Hom\big(\deRham \oplus i_!i^*\deRham, I
\oplus i_!J\big), \, a \mapsto a \circ c
\end{equation*}
is a quasi-isomorphism, and it remains to show that the same is true
for
\begin{equation*}
\begin{pmatrix} \Gamma\deRham & \Gamma o_!o^*\deRham \\
\Gamma i_!i^*\deRham & \Gamma i_!i^*\deRham\end{pmatrix} \ra
\Hom\big(\deRham \oplus i_!i^*\deRham, I \oplus i_!J\big),\, b
\mapsto c \circ b
\end{equation*}
For this, we need to show that the following maps are
quasi-isomophisms:
\begin{enumerate}
\item $\Gamma\deRham \ra \Hom(\deRham, I), \, a \mapsto \ol\imath \circ
(a \wedge \cdot)$ \item $\Gamma o_!o^*\deRham \ra
\Hom(i_!i^*\deRham, I), \, a \mapsto \ol\imath \circ (a \wedge
\cdot)$
\item $\Gamma i_!i^*\deRham \ra \Hom(\deRham, i_!J), \, a \mapsto
i_!\ol\jmath \circ (a \wedge \cdot)$ \item $\Gamma i_!i^*\deRham \ra
\Hom(i_!i^*\deRham, i_!J), \, a \mapsto i_!\ol\jmath \circ (a \wedge
\cdot)$
\end{enumerate}

(1) A quick one as starter: Consider the commutative diagram
\begin{equation*}
\xymatrix{\Gamma\deRham \ar[d]\ar[r] & \Gamma I \ar[d]^\wr \\
\Hom(\deRham, I) \ar[r] & \Hom(\cs\field X, I)}
\end{equation*}
It is a standard result that the horizontal maps are
quasi-isomorphism (for the upper one we need that $\deRham$ is a
soft, hence $\Gamma$-acyclic resolution of the constant sheaf), as
well as that the right vertical is an isomorphism. Hence the left
map must have been a quasi-isomorphism, too, and we are done.

(2) We would like to remind the reader of the fact that for any
sheaf $\mc{F}$ on $X$, $\Gamma o_!o^*\mc{F}$ are the sections with
support in $\int(D)$, and $\Gamma i_! i^{(!)}\mc{F}$ are the
sections with support in $D$ (where $i^{(!)}$ is the right adjoint
of $i_!$ as explained in section \ref{Conventions}). We get a
natural inclusion
$$
\Gamma o_!o^*\mc{F} \hra \Gamma i_! i^{(!)}\mc{F}
$$
which yields the upper right horizontal map in the following
diagram,
\begin{equation*}
\xymatrix{\Gamma o_!o^*\deRham \ar[d]\ar[r] & \Gamma o_!o^*I\ar[r] & \Gamma i_!i^{(!)}I \ar[d]^\wr\\
\Hom(i_!i^*\deRham, \deRham) \ar[d] && \Hom(\cs\field X, i_!i^{(!)}I) \ar[d]^\wr \\
\Hom(i_!i^*\deRham, I) \ar[rr] & & \Hom(i_!i^*\cs\field X, I)}
\end{equation*}
where all other maps should be obvious. As always, the two right
verticals are isomorphisms (the lower one uses two adjunctions), and
the lower horizontal is a quasi-isomorphism. The functors $o_!$ and
$o^*$ are exact, hence $o_!o^*\deRham \ra o_!o^*I$ is a
quasi-isomorphism, and since soft sheaves on paracompact spaces are
$\Gamma$-acyclic, it induces a quasi-isomorphism $\Gamma
o_!o^*\deRham \ra \Gamma o_!o^*I$. Thus it remains to prove that the
natural map $\Gamma o_!o^*I \ra \Gamma i_!i^{(!)} I$ is a
quasi-isomorphism.

For this, we need to calculate the cohomologies of both sides. It is
easy to check that for a locally closed subset $Z\overset{z}{\hra}
Y$ of a topological space $Y$, such that $Z$ has compact closure, we
have a natural isomorphism of functors $\Gamma(Y, \cdot) \circ z_!
\simeq \Gamma_c(Z,\cdot)$. Since $o^*I$ is an injective resolution
of the constant sheaf $\cs\field{\int D}$ on $\int (D)$, we get
\begin{align*}
H^\exponent(\Gamma o_!o^*I) & = H^\exponent_c(\int(D),
\cs\field{\int(D)})
\simeq \begin{cases} 0, \hspace{1mm}\exponent \neq \dim_\BR X\\
\field, \hspace{1mm}\exponent = \dim_\BR X\end{cases}
\end{align*}
where the last isomorphism is due to Poincar\'e duality.

The $\exponent^{\text{th}}$ cohomology of $\Gamma i_!i^{(!)}I$ is,
as follows from the above diagram, equal to
$\Ext^\exponent(i_!i^*\cs\field X, \cs\field X)$, which in turn is,
as we have seen in the proof of Proposition \ref{Aufblasen},
$\Ext^\exponent(i_{pt !}\cs\field{pt}, \cs\field X)$.
%Take an
%oriented open neighbourhood $U$ of $\pt$ and denote by $i_{\pt}:
%\pt\ra U$ the inclusion, then by \cite{Iversen} we have
%$i_{\pt}^{!}\cs\field U \cong \cs\field\pt[-\dim X]$. If $u:U\hra X$
%is the inclusion, then $u^{!} = u^*$, and we get
%$i_{\pt}^{!}\cs\field X = i_{\pt}^{!}u^*\cs\field X \cong
%\cs\field\pt[-\dim X]$. This yields
A standard argument shows that $i_{\pt}^{!}\cs\field X \simeq
\cs\field\pt[-\dim X]$, and we get
\begin{align*}
\Ext^\exponent(i_{pt !}\cs\field{pt}, \cs\field X) & =
\Ext^\exponent(\cs\field{pt}, i_{pt}^!\cs\field X) \simeq
\begin{cases} 0, \hspace{1mm}\exponent \neq \dim_\BR X\\
\field, \hspace{1mm}\exponent = \dim_\BR X\end{cases}
\end{align*}
So we only need to check that the morphism $\Gamma o_!o^*I \ra
\Gamma i_!i^*I$ induces a surjection on the $(\dim X)^\text{th}$
cohomology. For this, choose a smaller ball $\hat D$ that is
contained in the interior of $D$, and denote by $\hat \i: \hat D\hra
X$ the inclusion. We see that the inclusion $\Gamma \hat \i_!\hat
\i^*I \ra \Gamma i_!i^*I$ is a quasi-isomorphism. (This can be seen
as in the proof of Proposition \ref{Aufblasen}, where we saw that
both are quasi-isomorphic to $\Hom(i_{\pt !}i_{\pt}^*\cs\field X,
I)$ via compatible quasi-isomorphisms.) However, the
quasi-isomorphism $\Gamma \hat \i_!\hat \i^*I \ra \Gamma i_!i^*I$
splits over $\Gamma o_!o^*I$, hence the inclusion $\Gamma o_!o^*I
\ra \Gamma i_!i^*I$ must induce surjections on cohomology.

(3) Considering the following diagram
\begin{equation*}
\xymatrix{\Gamma i_!i^*\deRham \ar[d]\ar[r]& \Gamma i_!J \ar[d]^\wr\\
\Hom(\deRham, i_!J) \ar[r] & \Hom(\cs\field X, i_!J)}
\end{equation*}
we can use a similar argument to (1).

(4) For this we use the diagram
\begin{equation*}
\xymatrix{\Gamma i_!i^*\deRham \ar[dd]\ar[r] & \Gamma i_!J \ar[d]^\wr\\
& \Hom(\cs\field X, i_!J) \ar[d]^\wr\\
\Hom(i_!i^*\deRham, i_!J) \ar[r] & \Hom(i_!i^*\cs\field X, i_!J)}
\end{equation*}
and again a similar argument.
\end{proof}

Combining the equivalence \eqref{equivalence}, Proposition
\ref{Aufblasen} and Proposition \ref{deRham}, we can summarize the
results from this section in the following theorem:

\begin{theorem}\label{deRhamTheorem}
Let $X$ be a second countable differentiable manifold and $\pt$ a
point in $X$ such that $X\backslash\pt$ is simply connected. For
$\deRham$ the sheaf of $\field$-valued differential forms and $D$
any closed ball around $\pt$, let $\mc{M}(\deRham)$ be as defined in
(\ref{deRhamdef}). Then we have an equivalence of triangulated
categories
$$
\Dbc \simeq \mc{D}^f_{\mc{M}(\deRham)}
$$
where $\mc{D}^f_{\mc{M}(\deRham)}$ is the subcategory of
$(\Per{\mc{M}(\deRham)})^\circ$ corresponding to
$\mc{D}^f_{\End{(I\oplus i_{\pt !}\cs\field\pt)}}$ under the
equivalence of categories
$$
\Per{\End{(I\oplus i_{\pt !}\cs\field\pt)}} \simeq
\Per{\mc{M}(\deRham)}
$$
which is induced by the quasi-equivalence of the dg algebras $\End
(I\oplus i_{\pt !}\cs\field\pt)$ and $\mc{M}(\deRham)$.
\end{theorem}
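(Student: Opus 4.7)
The proof is essentially an assembly step: all substantive work has been done in Propositions \ref{Aufblasen} and \ref{deRham}, together with the equivalence \eqref{equivalence}. My plan is to chain these three results together and check that the subcategory $\mc{D}^f$ is transported correctly.

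First I would fix an injective resolution $I$ of $\cs\field X$ and an injective resolution $J$ of $\cs\field D$ (using that there are enough injectives in $\Sh X$ and $\Sh D$). Since $X\backslash \pt$ is simply connected, the d\'evissage argument from Section \ref{Kap2} shows that $\Dbc$ is generated, as a triangulated category, by $\cs\field X$ (possibly shifted) together with $i_{\pt !}\cs\field\pt$; as observed in the text, the shift has no effect on the dg algebras involved. Corollary \ref{furuns} therefore gives
$$\Dbc \sira \mc{D}^f_{\End(I \oplus i_{\pt !}\cs\field\pt)}.$$

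Next I would apply Proposition \ref{Aufblasen} to obtain a quasi-equivalence of dg algebras
$$\End(I \oplus i_{\pt !}\cs\field\pt) \text{ and } \End(I \oplus i_!J),$$
and then Proposition \ref{deRham} to obtain a quasi-equivalence
$$\End(I \oplus i_!J) \text{ and } \mc{M}(\deRham).$$
Concatenating, $\End(I \oplus i_{\pt !}\cs\field\pt)$ is quasi-equivalent to $\mc{M}(\deRham)$. By Keller's theorem recalled in Section \ref{Kap2}, quasi-equivalent dg algebras have equivalent perfect categories, hence an induced equivalence
$$\Per{\End(I \oplus i_{\pt !}\cs\field\pt)} \simeq \Per{\mc{M}(\deRham)},$$
and likewise for their opposites.

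Finally I would define $\mc{D}^f_{\mc{M}(\deRham)}$ to be the image of $\mc{D}^f_{\End(I \oplus i_{\pt !}\cs\field\pt)}$ under this equivalence; this is precisely the formulation in the statement of the theorem. Combining with the equivalence from Corollary \ref{furuns} above yields
$$\Dbc \simeq \mc{D}^f_{\mc{M}(\deRham)}.$$
The only possible obstacle would be checking that the chain of quasi-equivalences induces a well-defined equivalence of perfect categories, but this is precisely the content of Keller's result, so no further work is needed; the full proof is just assembly.
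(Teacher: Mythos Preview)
Your proposal is correct and matches the paper's own approach exactly: the theorem is stated in the paper as a summary obtained by ``combining the equivalence \eqref{equivalence}, Proposition \ref{Aufblasen} and Proposition \ref{deRham}'', which is precisely the chain you assemble. No additional ideas are needed.
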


A short remark to finish: For the sake of readability we have
refrained from allowing rings as coefficients in this section.
However, the reader can check that all statements in this section
hold for rings as well, if we replace the corresponding skyscraper
$i_{\pt !}\cs\Ring\pt$ by an injective resolution, and the sheaf of
differential forms by the sheaf of singular cochains (or indeed by
any other soft resolution of the constant sheaf by a sheaf of dg
algebras whose restrictions to open subsets are again soft). This
allows us to take coefficients in a PID, and with little more
effort, get an analogous result.

\subsection{Formality for spheres}
Denote by $\nKugelschale$ the sphere in $\BR^{n+1}$. For $n\geq 2$,
we consider the stratification of $\nKugelschale$ consisting of a
point $\pt$ and its complement, and now use Theorem
\ref{deRhamTheorem} to prove formality of the corresponding
constructible derived category.

We use the notations from the previous section: For $\BK$ the real
or complex numbers, let $\deRham$ be the sheaf of $\BK$-valued
differential forms on $\nKugelschale$. Choose a small closed ball
$D$ around $\pt$. For $i: D\hra\nKugelschale$ and
$o:\int(D)\hra\nKugelschale$ the inclusions, let $\mc{M}(\deRham)$
be the dg algebra defined in \eqref{deRhamdef}. Theorem
\ref{deRhamTheorem} then provides an equivalence
$$
D^b_c(\nKugelschale) \simeq \mc{D}^f_{\mc{M}(\deRham)}
$$
and we now give the promised formality result:

\begin{theorem}\label{formalityfornspheres}
The generalized de Rham algebra $\mc{M}(\deRham)$ on the $n$-sphere
$S^n$ (for $n\geq 2$) is formal.
\end{theorem}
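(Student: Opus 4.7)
The plan is to prove formality via a zig-zag of dg-algebra quasi-isomorphisms
\begin{equation*}
\mc{M}(\deRham) \hookleftarrow \mc{A} \twoheadrightarrow H\mc{M}(\deRham)
\end{equation*}
through a carefully chosen intermediate sub-dg-algebra $\mc{A} \subset \mc{M}(\deRham)$. The main geometric input is the freedom, on $S^n$, to pick a representative of the fundamental class supported inside the open ball $\int(D)$.

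Fix such a top-degree form $\omega_c \in \Gamma o_!o^*\deRham$ with nonzero integral over $S^n$, and write $\tilde\omega_c \in \Gamma\deRham$ for its extension by zero. Since $D$ is contractible, the restriction $\omega_c|_D \in \Gamma i_!i^*\deRham$ is exact, so choose a primitive $\eta$ of degree $n-1$ with $d\eta = \omega_c|_D$. Define $\mc{A}$ entry by entry as
\begin{equation*}
\mc{A}^{(1,1)} = \BK\langle 1_X, \tilde\omega_c\rangle, \quad \mc{A}^{(1,2)} = \BK\langle \omega_c\rangle, \quad \mc{A}^{(2,1)} = \mc{A}^{(2,2)} = \BK\langle 1_D, \omega_c|_D, \eta\rangle.
\end{equation*}
Stability of $\mc{A}$ under $d$ is immediate, as the only nontrivial differential among generators is $d\eta = \omega_c|_D$. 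Closure under the matrix multiplication of $\mc{M}(\deRham)$ amounts to checking the various product patterns of generators; most vanish for degree reasons, since any wedge involving two generators of degree $n$ or more has total degree exceeding $n$ and so vanishes on an $n$-manifold. The essential nontrivial products are $\omega_c \cdot 1_D = \tilde\omega_c$ in position $(1,1)$ and $1_D \cdot \tilde\omega_c = \omega_c|_D$ in position $(2,1)$ (and analogously in $(2,2)$), which by construction lie in $\mc{A}$.

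For the left leg of the zig-zag, the cohomology of each entry of $\mc{M}(\deRham)$ was already computed in the proof of Proposition \ref{deRham} (namely $H^*(S^n)$, $H^*_c(\int(D))$, and $H^*(D)$), and the generators chosen for $\mc{A}$ realize exactly these classes; the acyclic pair $(\omega_c|_D, \eta)$ has been added in the second row precisely to force positive-degree cohomology there to vanish. Hence $\mc{A} \hookrightarrow \mc{M}(\deRham)$ is an entry-wise quasi-isomorphism. For the right leg, set $I^{(2,1)} = I^{(2,2)} = \BK\langle \omega_c|_D, \eta\rangle$ and $I^{(1,1)} = I^{(1,2)} = 0$. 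The same degree-counting arguments show $I$ is a two-sided dg ideal, and it is acyclic (each non-zero entry being the two-term complex $\BK\eta \xrightarrow{\sim} \BK\omega_c|_D$). Hence $\mc{A} \twoheadrightarrow \mc{A}/I$ is a quasi-isomorphism; the quotient has zero differential, graded pieces matching $H\mc{M}(\deRham)$, and multiplication reproducing the Yoneda product (e.g.\ $\overline{\omega_c} \cdot \overline{1_D} = \overline{\tilde\omega_c}$ gives the nontrivial fundamental-class generator in position $(1,1)$), so $\mc{A}/I \cong H\mc{M}(\deRham)$.

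The main obstacle is the combinatorial verification that $\mc{A}$ is closed under matrix multiplication and that $I$ is a two-sided ideal. Both reduce to the single geometric observation that a volume form supported in $\int(D)$ becomes exact upon restriction to the closed ball $D$, with the primitive $\eta$ serving as an explicit chain-level witness.
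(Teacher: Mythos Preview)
Your proof is correct and follows essentially the same route as the paper: your subalgebra $\mc{A}$ coincides with the paper's $\mc{U}$, and quotienting by the acyclic ideal $I$ is precisely the paper's projection $\mc{U}\twoheadrightarrow H(\mc{U})$. One detail the paper makes explicit that you gloss over: for $n=2$ the product $\eta\wedge\eta$ has degree $2=n$ and does not vanish by your stated degree count, but rather by graded commutativity since $\eta$ then has odd degree.
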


\begin{proof}
First we need to understand what the cohomology of $\mc{M}(\deRham)$
looks like. The global sections $\Gamma \deRham$ calculate the
cohomology of the sphere:
$$
H^\exponent(\Gamma \deRham) =
\begin{cases}\BK, \hspace{1mm}\exponent = n \text{ or } 0\\ 0,
\hspace{1mm}\text{ otherwise} \end{cases}
$$

As in the proof of Proposition \ref{deRham}, we see that
$$
H^\exponent(\Gamma o_!o^*\deRham) = H^\exponent_c(\int(D),
\cs\BK{\int(D)}) \simeq
\begin{cases}\BK, \hspace{1mm}\exponent = n \\ 0,
\hspace{1mm}\exponent \neq n \end{cases}
$$
and
$$
H^\exponent(\Gamma i_!i^*\deRham)  = H^\exponent(D, \cs\BK D) \simeq
\begin{cases} \BK, \hspace{1mm}\exponent = 0\\ 0, \hspace{1mm}\exponent \neq
0\end{cases}.
$$
We claim that the natural inclusion $\Gamma o_!o^*\deRham \ra
\Gamma\deRham$ yields an isomorphism on the $n^\text{th}$ cohomology
group. This can be seen as follows: Denoting by $h: \nKugelschale
\backslash \int(D) \hra \nKugelschale$ the inclusion, we get a short
exact sequence
\begin{equation*}
o_!o^*\deRham \hra \deRham \tha h_!h^*\deRham
\end{equation*}
Since all of those are complexes of soft, hence $\Gamma$-acyclic
sheaves, we get a corresponding short exact sequence on the global
sections.
%\begin{equation*}
%\Gamma o_!o^*\deRham \hra \Gamma \deRham \tha \Gamma h_!h^*\deRham
%\end{equation*}
Our claim follows from the corresponding long exact cohomology
sequence, if we use that the cohomology groups
\begin{align*}
H^\exponent(\Gamma h_!h^*\deRham) & = H^\exponent(\Gamma(
\nKugelschale \backslash \int(D) , h^*\deRham)) = H^\exponent(
\nKugelschale \backslash \int(D) , \cs\BK{\nKugelschale \backslash
\int(D)})
\end{align*}
vanish for $\exponent > 0$.

Now choose a generator $\omega\in\Gamma o_!o^*\deRham$ of the
$n^\text{th}$ cohomology. By what we have just shown, $\omega$ must
then be a generator of the $n^\text{th}$ cohomology of the sphere
$\nKugelschale$. Since $\Gamma i_!i^*\deRham$ has no $n^\text{th}$
cohomology, the cohomology class of $\omega\vert_D$ must vanish.
Hence there is a $\tau\in\Gamma i_!i^*\deRham^{n-1}$ satisfying
$d\tau = \omega\vert_D$.

Now define a sub-vector space $\mc{U}$ of $\mc{M}(\deRham)$ as
follows:
\begin{equation*}
\mc{U} := \left(\begin{array}{ccccccccccc}
&&\BK\cdot\omega&&&&&&\BK\cdot\omega&&\\
&&\oplus&&&&&&&&\\
&&\BK\cdot 1&&&&&&&&\\
&&&&&&&&&&\\
&&&&&&&&&&\\
&&\BK \cdot \omega\vert_D&&&&&&\BK \cdot \omega\vert_D&&\\
&&\oplus&&&&&&\oplus&&\\
&&\BK\cdot\tau&&&&&&\BK\cdot\tau&&\\
&&\oplus&&&&&&\oplus&&\\
&&\BK\cdot 1&&&&&&\BK\cdot 1&&\\
\end{array}\right)
\end{equation*}
where $\BK\cdot\omega$ denotes the vector space generated by
$\omega$ etc. It is straightforward that $\mc{U}$ is closed under
multiplication: As the de Rham-complex vanishes in degrees greater
than $n$, it suffices to check that $\tau\wedge\tau = 0$. For $n\geq
3$ this is clear for degree reasons, and for $n=2$ it holds because
$\Gamma i_!i^*\deRham$ is super-commutative and the degree of $\tau$
is odd.

From the construction of $\mc{U}$ it is clear that the inclusion
$\mc{U} \hra \mc{M}(\deRham)$ is a quasi-isomorphism. It is also
obvious that the projection $\mc{U} \tha H(\mc{U})$ is a
quasi-isomorphism, hence the chain of quasi-isomorphisms
$$
\mc{M}(\deRham) \hookleftarrow \mc{U} \tha H(\mc{U})
$$
yields the desired formality.
\end{proof}

As mentioned at the beginning of this section, it is straightforward
to check that shifting the constant sheaf on $S^n$ does not affect
any of the arguments in this section. All we need to do is adapt the
definition of $\mc{M}(\deRham)$ by introducing corresponding shifts.
Hence in this section we have effectively proved Theorem \ref{main1}
(ii) from the introduction.

\section{A description of $\Dbc$ by representations of quivers}\label{Kap4}

\subsection{$\mc{S}$-acyclic sheaves and $\mc{S}$-acyclic stratifications}

In this section we would like to describe $\Dbc$ using
representations of quivers. Let $\Ring$ be a commutative unitary
Noetherian ring of finite homological dimension and $X$ a stratified
topological space. We would like to remind the reader that we assume
any stratification to satisfy the conditions (i) - (iii) introduced
in section \ref{TopologicalConventions}. We denote by $\mc{S}$ the
set of strata, and for a stratum $S\in\mc{S}$ by $\Staralpha$ its
star, i.e. the union of all strata whose closure contains $S$:
\begin{equation*}
\Staralpha = \bigcup_{\Str \subset \ol\Strr} \Strr
\end{equation*}
The star of $S$ is the smallest open set consisting of strata that
contains $S$.

Let us now consider the projection $p: X \tha \mc{S}$ that maps
every point to the stratum that contains it. We endow $\mc{S}$ with
the final topology and would now like to link the category of
sheaves on $\mc{S}$, $\Sh{\mc{S}}$, to the category of weakly
constructible sheaves on $X$, $\wXCons$.

\begin{proposition}\label{UnderivedEquivalence}
Let $\mc{S}$ be a stratification of a topological space $X$ that
satisfies the following assumption: For every weakly constructible
sheaf $\mc{F}$, every stratum $S\in\mc{S}$ and every $x\in S$ the
natural morphism
$$\Gamma(\Staralpha, \mc{F}) \ra \mc{F}_x$$
is an isomorphism. Then for $p:X\tha\mc{S}$ the projection, the
adjoint pair $(p^*,p_*)$ yields an equivalence of categories
\begin{equation*}
\wXCons \overset{p_*}{\underset{p^*}\rightleftarrows} \Sh{\mc{S}}
\end{equation*}
\end{proposition}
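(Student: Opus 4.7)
The plan is to verify that the adjunction unit $\eta: \mc{G} \to p_* p^* \mc{G}$ and counit $\ve: p^* p_* \mc{F} \to \mc{F}$ are isomorphisms, using the assumption only for the restricted pair of categories. The key preliminary observation is that the final topology on $\mc{S}$ makes the stars $\Staralpha$ a basis of open neighborhoods of the point $S \in \mc{S}$; indeed, a subset of $X$ that is a union of strata is open precisely when, for every stratum $S$ it contains, it contains $\Staralpha$. Consequently $\Staralpha$ is the smallest open set of $\mc{S}$ containing $S$, and for any sheaf $\mc{G}$ on $\mc{S}$ one has $\mc{G}_S = \mc{G}(\Staralpha)$.

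First I would show that $p^*$ lands in $\wXCons$. For $x \in S$, the stalk $(p^*\mc{G})_x$ is computed as $\mathrm{colim}_{V\ni S} \mc{G}(V) = \mc{G}_S = \mc{G}(\Staralpha)$, so the stalks are constant along each stratum. To upgrade this to saying that $p^*\mc{G}$ restricted to a stratum $S$ is a \emph{constant} sheaf, I would factor $p\vert_S$ as the constant map $c_S: S \to \mc{S}$ with value $S$; then $(p^*\mc{G})\vert_S = c_S^*\mc{G}$, which for a connected (in our case simply connected) space is just the constant sheaf with value $\mc{G}_S = \mc{G}(\Staralpha)$. So $p^*\mc{G}$ is weakly constructible.

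Both unit and counit are now checked stalkwise. For the counit, the stalk at $x\in S$ is
\begin{equation*}
(p^* p_* \mc{F})_x = (p_* \mc{F})_S = (p_* \mc{F})(\Staralpha) = \mc{F}(\Staralpha),
\end{equation*}
and the map $\ve_x$ is precisely the natural morphism $\mc{F}(\Staralpha) \to \mc{F}_x$, which is an isomorphism by hypothesis since $\mc{F} \in \wXCons$. For the unit, I would check stalks on $\mc{S}$: at the point $S$,
\begin{equation*}
(\eta)_S : \mc{G}_S = \mc{G}(\Staralpha) \;\longrightarrow\; (p_* p^* \mc{G})_S = (p^* \mc{G})(\Staralpha),
\end{equation*}
and applying the hypothesis to the weakly constructible sheaf $p^*\mc{G}$ identifies the target with the stalk $(p^*\mc{G})_x = \mc{G}(\Staralpha)$; one verifies directly that the composition is the identity, so $(\eta)_S$ is an isomorphism. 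The only non-formal step in all of this is the verification that $p^*\mc{G}$ restricts to a constant sheaf on each stratum, and this is where the simple-connectedness (in fact, connectedness suffices here) of the strata enters essentially; everything else is bookkeeping with the adjunction and the Alexandrov-style topology of $\mc{S}$.
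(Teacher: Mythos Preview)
Your proof is correct and follows the same approach as the paper: you check that the unit and counit of the adjunction are isomorphisms by computing stalks, exactly as the paper indicates (its proof reads, in full, ``We just need to check that $p^*p_* \ra id$ and $id \ra p_*p^*$ are natural isomorphisms, which is straightforward by looking at the stalks''). You have simply supplied the details the paper omits, including the observation that $p^*$ lands in $\wXCons$ and the description of the Alexandrov topology on $\mc{S}$.
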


\begin{proof}
We just need to check that $p^*p_* \ra id$ and $id \ra p_*p^*$ are
natural isomorphisms, which is straightforward by looking at the
stalks.
\end{proof}

We would like to extend this statement to the corresponding derived
categories. For this, we need to introduce the notions of
$\mc{S}$-acyclic sheaves and acyclic stratifications.

\begin{definition}
Let $\mc{S}$ be a stratification of a topological space $X$. We call
a sheaf $\mc{F}$ on $X$ $\mc{S}$-acyclic if, for every stratum
$S\in\mc{S}$ and every $i > 0$,
\begin{equation*}
H^i\big(\Staralpha, \mc{F}) = 0
\end{equation*}
holds. This is equivalent to $\mc{F}$ being $p_*$-acyclic, where
$p:X\ra\mc{S}$ is the projection.
\end{definition}

\begin{definition}
We call a stratification $\mc{S}$ of $X$ acyclic if it satisfies the
following two conditions:
\begin{enumerate}
\item For every weakly constructible sheaf $\mc{F}$, every stratum $S\in\mc{S}$ and
and every $x\in S$ the natural map $\Gamma(\Staralpha, \mc{F}) \ra
\mc{F}_x$ is an isomorphism.
\item Every weakly constructible sheaf on $X$ is $\mc{S}$-acyclic.
\end{enumerate}
\end{definition}

It is proved in \cite{KS} that simplicial complexes with their
natural stratification are acyclic. We will see more examples of
acyclic stratifications in section \ref{Kap5}.

\begin{theorem}\label{weakequivalence}
On a topological space $X$ with an acyclic stratification $\mc{S}$
the adjoint pair $(Rp_*, p^*)$ induces an equivalence of categories
\begin{equation*}
\Dpluswc \underset{p^*}{\overset{Rp_*}\rightleftarrows} D^+(\mc{S}).
\end{equation*}
If $X$ has finite homological dimension, these induce equivalences
\begin{equation*} \Dbwc
\underset{p^*}{\overset{Rp_*}\rightleftarrows} D^b(\mc{S}).
\end{equation*}
\end{theorem}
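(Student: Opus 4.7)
My plan is to reduce Theorem~\ref{weakequivalence} to Proposition~\ref{UnderivedEquivalence} by showing that on the subcategory $\Dpluswc$, the derived direct image $Rp_*$ agrees, up to natural quasi-isomorphism, with the termwise underived $p_*$. First I would note that inverse image is always exact, and that the stalk formula $(p^*\mc{G})_x = \mc{G}_S$ for $x \in S$ makes $p^*\mc{G}$ the constant sheaf with value $\mc{G}_S$ on every (connected) stratum, so $p^*\mc{G}$ is weakly constructible. Hence $p^*$ descends to a functor $D^+(\mc{S}) \to \Dpluswc$, and it only remains to produce, for every $\mc{F}^\bullet \in \Dpluswc$, a natural quasi-isomorphism $p_*\mc{F}^\bullet \sira Rp_*\mc{F}^\bullet$.

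For this key step I would pick a Cartan--Eilenberg injective resolution $\mc{F}^\bullet \to I^{\bullet,\bullet}$ in $\Sh X$, so that $Rp_*\mc{F}^\bullet \simeq \Tot(p_* I^{\bullet,\bullet})$, and analyze the hypercohomology spectral sequence obtained by filtering by columns, whose $E_1$-page reads $E_1^{p,q} = R^q p_*(\mc{F}^p)$. By the second condition in the definition of an acyclic stratification, each weakly constructible sheaf $\mc{F}^p$ is $\mc{S}$-acyclic, hence $p_*$-acyclic, so $E_1^{p,q} = 0$ for $q > 0$. The spectral sequence therefore degenerates on the line $q=0$, and the augmentation $p_*\mc{F}^\bullet \to \Tot(p_* I^{\bullet,\bullet})$ is a quasi-isomorphism.

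Armed with this, the adjunction counit $p^* Rp_* \mc{F}^\bullet \to \mc{F}^\bullet$ reduces to the underived counit $p^* p_* \mc{F}^\bullet \to \mc{F}^\bullet$ applied termwise, which is an isomorphism at the level of complexes by Proposition~\ref{UnderivedEquivalence}; symmetrically, the unit $\mc{G}^\bullet \to Rp_* p^* \mc{G}^\bullet$ reduces to the underived unit (the key step applies since $p^*\mc{G}^\bullet$ is weakly constructible), again an isomorphism. This yields the equivalence on the $D^+$ level. For the bounded version, the key step also gives $R^n p_*\mc{F}^\bullet \cong p_* H^n \mc{F}^\bullet$, so $Rp_*$ preserves cohomological amplitude; combined with the exactness of $p^*$ and the finite homological dimension of $X$ (which allows bounded-cohomology objects to be represented by bounded complexes), both functors restrict to an equivalence of bounded derived categories.

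The main obstacle is the key step: one must set up the hypercohomology spectral sequence carefully and invoke $\mc{S}$-acyclicity of each $\mc{F}^p$ at precisely the $E_1$-page to force degeneration. Once this is in place, everything else follows formally from Proposition~\ref{UnderivedEquivalence}.
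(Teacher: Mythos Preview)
Your overall strategy---reduce $Rp_*$ to the underived $p_*$ via acyclicity and then invoke Proposition~\ref{UnderivedEquivalence}---is exactly the paper's. But there is a genuine gap in your key step. An object of $\Dpluswc$ is by definition a complex whose \emph{cohomology sheaves} are weakly constructible; the individual terms $\mc{F}^p$ need not be. So when you write ``each weakly constructible sheaf $\mc{F}^p$ is $\mc{S}$-acyclic'' and conclude that the first hypercohomology spectral sequence $E_1^{p,q}=R^qp_*(\mc{F}^p)$ degenerates, you are using a hypothesis you do not have. The same issue recurs when you assert that $p^*p_*\mc{F}^\bullet\to\mc{F}^\bullet$ is a termwise isomorphism ``by Proposition~\ref{UnderivedEquivalence}'': that proposition only applies to weakly constructible sheaves.

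The paper handles the counit differently: it replaces $\mc{F}$ by a complex of $p^*p_*$-acyclic objects (for instance injectives) and then refers to Kashiwara--Schapira, Proposition~8.1.9, for the argument that $p^*p_*\mc{F}\to\mc{F}$ is a quasi-isomorphism on such complexes. One clean way to repair your argument is to switch to the \emph{second} hypercohomology spectral sequence $E_2^{p,q}=R^pp_*(H^q\mc{F}^\bullet)$: now the input is the cohomology sheaves, which \emph{are} weakly constructible and hence $p_*$-acyclic, so the sequence collapses to give $R^np_*\mc{F}^\bullet\cong p_*H^n\mc{F}^\bullet$. Applying $p^*$ (which is exact) and Proposition~\ref{UnderivedEquivalence} to $H^n\mc{F}^\bullet$ then shows the counit is an isomorphism on cohomology, once you check the obvious compatibility. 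Your treatment of the unit is fine as written, since there the terms $p^*\mc{G}^p$ genuinely are weakly constructible.
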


\begin{proof}
We have to show that $id \ra Rp_*p^*$ and $p^*Rp_* \ra id$ are
natural equivalences. The first case follows from Proposition
\ref{UnderivedEquivalence}, and the fact that $p^*$ transforms a
sheaf on $\mc{S}$ into a $p_*$-acyclic object. For the second case
we need to show that for any complex $\mc{F} \in \Dpluswc$
consisting of $p^*p_*$-acyclic objects the morphism $p^*p_*\mc{F}
\ra \mc{F}$ is a quasi-isomorphism. This, however, works exactly as
the proof of Proposition 8.1.9 in \cite{KS}.
\end{proof}

Unfortunately, the same statement for the constructible derived
category doesn't work quite as smoothly. We need to introduce some
notation first: Let $\mc{C}$ be an abelian category and $\mc{C}'$ a
thick full abelian subcategory. Then $D^+_{\mc{C}'}(\mc{C})$ denotes
the full triangulated subcategory of $D^+(\mc{C})$ consisting of
complexes whose cohomology objects are in $\mc{C}'$.
$D^b_{\mc{C}'}(\mc{C})$ is defined analogously. We are interested in
$\mc{C} = \Sh{\mc{S}}$, the category of sheaves on $\mc{S}$, and
$\mc{C}' = \sh{\mc{S}}$, the full abelian subcategory of sheaves
with finitely generated stalks.

\begin{theorem}
On a topological space $X$ with an acyclic stratification $\mc{S}$
the adjoint pair $(Rp_*, p^*)$ induces an equivalence of categories
\begin{equation*}
D^+_c(X) \underset{p^*}{\overset{Rp_*}\rightleftarrows}
D^+_{\sh{\mc{S}}}(\Sh{\mc{S}}).
\end{equation*}If $X$ has finite homological dimension, these induce equivalences
\begin{equation*}
D^b_c(X) \underset{p^*}{\overset{Rp_*}\rightleftarrows}
D^b_{\sh{\mc{S}}}(\Sh{\mc{S}}).
\end{equation*}
\end{theorem}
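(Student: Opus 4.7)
The plan is to deduce this theorem directly from Theorem~\ref{weakequivalence} by showing that the weakly constructible equivalence restricts to the subcategories in the statement. The only thing to check is a stalkwise finiteness compatibility between the two sides.

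First I would invoke Theorem~\ref{weakequivalence} to obtain the equivalence $\Dpluswc \simeq D^+(\mc{S})$ given by $(Rp_*, p^*)$, and, under the finite homological dimension assumption, the corresponding bounded equivalence $\Dbwc \simeq D^b(\mc{S})$. Since $D^+_c(X) \subset \Dpluswc$ and $D^+_{\sh{\mc{S}}}(\Sh{\mc{S}}) \subset D^+(\mc{S})$ are full triangulated subcategories, it suffices to verify that these subcategories correspond to each other under $p^*$ and $Rp_*$.

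Second, I would use that $p^*$ is exact (it pulls back along a continuous map), so for any $\mc{G} \in D^+(\mc{S})$ we have $H^q(p^*\mc{G}) = p^*H^q(\mc{G})$; and at the level of stalks, $(p^*\mc{H})_x = \mc{H}_{p(x)}$ for any sheaf $\mc{H}$ on $\mc{S}$. Writing $\mc{F} \simeq p^*\mc{G}$ with $\mc{G} = Rp_*\mc{F}$ via the weakly constructible equivalence, this gives
$$
(H^q(\mc{F}))_x \;=\; (H^q(\mc{G}))_{S} \quad \text{for every } x \in S.
$$
Hence the cohomology sheaves of $\mc{F}$ have finitely generated stalks for all $q$ if and only if the cohomology sheaves of $\mc{G}$ lie in $\sh{\mc{S}}$ for all $q$. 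Since $\mc{F}$ is automatically weakly constructible, this reads as $\mc{F}\in D^+_c(X)$ iff $\mc{G}\in D^+_{\sh{\mc{S}}}(\Sh{\mc{S}})$. Restricting the equivalence of Theorem~\ref{weakequivalence} to these subcategories yields the claim, and the identical argument handles the bounded case.

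There is no substantive obstacle here: once the weakly constructible version is in hand, the refinement to finitely generated stalks reduces to the elementary computation of the stalks of $p^*$. Because $p^*$ is exact and serves as an inverse to $Rp_*$ on the relevant triangulated subcategories, no additional spectral sequence or acyclicity argument beyond those already invoked for Theorem~\ref{weakequivalence} is required.
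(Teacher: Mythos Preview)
Your proof is correct and essentially matches the paper's approach. The paper simply says the argument ``works as for Theorem~\ref{weakequivalence}'', and your way of fleshing this out---invoking the weakly constructible equivalence and then observing that the stalkwise finiteness condition is preserved because $(p^*\mc{H})_x = \mc{H}_{p(x)}$---is exactly the content behind that one-line proof.
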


\begin{proof}
The proof works as for Theorem \ref{weakequivalence}. Note that this
theorem is still true if we allow stratifications that are only
``almost'' acyclic: The second condition in the definition of an
acyclic stratification needs to be true only for constructible
sheaves.
\end{proof}

This theorem is rounded off by the following statement:
\begin{proposition}
For any stratified topological space the natural functor
\begin{displaymath}
D^b(\sh{\mc{S}})\hspace{3mm}\sira \hspace{3mm}
D^b_{\sh{\mc{S}}}(\Sh{\mc{S}})\hspace{4mm}
\end{displaymath}
yields an equivalence of triangulated categories.
\end{proposition}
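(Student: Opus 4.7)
The plan is to reduce the statement to a standard criterion from homological algebra. I will exploit the fact that our stratifications are finite by convention, so $\mc{S}$ is a finite topological space; endowed with its Alexandrov topology it is determined by its specialization preorder, and $\Sh{\mc{S}}$ identifies with the functor category from this poset (with appropriate orientation) to $\Rmod$. Under this identification $\sh{\mc{S}}$ corresponds to those functors landing in $\Rmodf$. Since $\Ring$ is Noetherian, $\Rmodf$ is closed under subobjects, quotients, and extensions in $\Rmod$, and these properties pass pointwise to show that $\sh{\mc{S}}$ is a thick abelian subcategory of $\Sh{\mc{S}}$.

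I would then invoke the following general principle (see, e.g., Keller or Kashiwara--Schapira): if $\mc{B}$ is a thick abelian subcategory of $\mc{A}$ satisfying the lifting condition that for every epimorphism $A \twoheadrightarrow B$ in $\mc{A}$ with $B \in \mc{B}$ there exists $B' \in \mc{B}$ and a morphism $B' \to A$ whose composition with the given epimorphism is itself an epimorphism, then the natural functor $D^b(\mc{B}) \to D^b_{\mc{B}}(\mc{A})$ is an equivalence of triangulated categories. This criterion is the formal expression of the fact that, with enough lifting room, both essential surjectivity (replace a bounded complex by a quasi-isomorphic one with terms in $\mc{B}$) and full faithfulness (refine roofs to roofs inside $\mc{B}$) can be carried out by induction on the amplitude.

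To verify the lifting condition in our situation, take an epimorphism $F \twoheadrightarrow G$ in $\Sh{\mc{S}}$ with $G \in \sh{\mc{S}}$. For each $S \in \mc{S}$ pick a finite generating set of the finitely generated $\Ring$-module $G_S$ and lift each generator to an element of the corresponding stalk $F_S$. Because $\mc{S}$ is finite the resulting total family of lifts is finite, and the subsheaf $F' \subset F$ they generate has finitely generated stalks, hence lies in $\sh{\mc{S}}$, while by construction it surjects onto $G$.

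Granted the criterion and the lifting property just checked, the proposition follows. The main obstacle is really only the initial step of locating the right general criterion and fitting our setup into it; once the finiteness of $\mc{S}$ is exploited to transfer the Noetherian hypothesis on $\Ring$ into sheaf-theoretic statements on $\Sh{\mc{S}}$, the verification of the lifting property is essentially a bookkeeping exercise.
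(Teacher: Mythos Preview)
Your proposal is correct and follows essentially the same route as the paper: both invoke the standard criterion from \cite{KS} for when $D^b(\mc{C}')\to D^b_{\mc{C}'}(\mc{C})$ is an equivalence, and then verify the required lifting condition using Noetherianity of $\Ring$ and finiteness of $\mc{S}$. The paper's proof is terse, merely pointing to the criterion and to the analogous argument for Theorem~8.1.11 in \cite{KS}; you have supplied exactly the verification (construction of the finitely generated subsheaf $F'\subset F$ surjecting onto $G$) that the paper leaves to those references.
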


\begin{proof}
The trick is to use a standard criterion for an equivalence between
$D^b(\mc{C})$ and $D^b_{\mc{C}'}(\mc{C})$, which can be found in
\cite{KS}. The details work very similarly to the proof of Theorem
8.1.11 in \cite{KS}; a detailed proof can be found in
\cite{Balthasar}.
\end{proof}

\subsection{Representations of quivers}\label{Kap4.2}

We would now like to make the transition from sheaves on $\mc{S}$ to
representations of quivers. A detailed account on quivers can be
found in \cite{GabrielRoiter}; I will only give a very short summary
here.

By a quiver we mean a finite directed graph. A representation $V$ of
a quiver is a collection of $\Ring$-modules $V_x$ for every vertex
$x$ (the stalks), together with $\Ring$-linear maps $V_\gamma: V_x
\ra V_y$ for every arrow $\gamma$ from $x$ to $y$. A map of
representations is a compatible collection of maps on the stalks.
For a quiver $Q$ we denote by $\Rep Q$ the category of
representations of $Q$ and by $\Repee Q$ the category of
representations whose stalks are finitely generated. A relation in a
quiver is an $\Ring$-linear combination of paths starting and ending
at the same vertices. If we have a quiver $Q$ with a set of
relations $\rho$, we denote by $\Rep(Q,\rho)$ the full subcategory
of $\Rep Q$ given by representations that are compatible with the
relations. The category $\Repee(Q,\rho)$ is defined analogously.

To every stratified space $(X,\mc{S})$ we assign a quiver as
follows: The vertices are given by the strata in $\mc{S}$, and there
is an arrow from $S$ to $T$ if and only if $S\subset \ol T$. By
identifying all paths between two vertices we get a set of
relations. We denote the corresponding quiver with relations by
$(Q_\mc{S}, \rho_\mc{S})$.

Now there is a quite obvious functor $\Sh{\mc{S}} \ra \Rep
(Q_\mc{S},\rho_\mc{S})$ and vice versa: From a sheaf $\mc{F}$ on
$\mc{S}$ we get a representation by assigning to each vertex the
corresponding stalk, and to an arrow $S\ra T$ we assign the
restriction map $\Gamma(p(\Staralpha), \mc{F}) \ra
\Gamma(p(\Starbeta), \mc{F})$. This functor is exact and induces
equivalences $\Rep (Q_\mc{S},\rho_\mc{S}) \sira \Sh{\mc{S}}$ and
$\Repee (Q_\mc{S},\rho_\mc{S}) \sira \sh{\mc{S}}$. This is easily
checked using the obvious inverse functor, which is also exact.
Hence for $*=+$ or $b$, these functors yield equivalences
\begin{align*}
D^*({\Sh{\mc{S}}}) \rightleftarrows D^*(\Rep(Q_\mc{S}, \rho_\mc{S}))
\hspace{8mm} & \text{and} & D^*({\sh{\mc{S}}}) \rightleftarrows
D^*(\Repee(Q_\mc{S}, \rho_\mc{S})).
\end{align*}
Summarizing the results from this section, we get the following
theorem:
\begin{theorem}\label{quivers}
We can assign a quiver $Q_\mc{S}$ with relations $\rho_\mc{S}$ to
every acyclic stratification $\mc{S}$ on a topological space $X$ of
finite homological dimension, such that we get natural equivalences
$$
\Dsternwc \sira D^*(\Rep(Q_\mc{S},\rho_\mc{S}))
\hspace{5mm}\text{for $* = +,b$}
$$
and
$$
\Dbc \sira D^b(\Repee(Q_\mc{S},\rho_\mc{S}))
$$
\end{theorem}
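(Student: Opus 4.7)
The plan is to assemble Theorem \ref{quivers} from the chain of equivalences already established earlier in Section \ref{Kap4}, since each link in the chain has been set up individually. First I would define the quiver $Q_\mc{S}$ and the relations $\rho_\mc{S}$ exactly as described in Section \ref{Kap4.2}: vertices are strata, there is an arrow $S\to T$ iff $S\subset\ol{T}$, and $\rho_\mc{S}$ consists of the commutativity relations identifying parallel paths. This makes the assignment $(X,\mc{S})\mapsto (Q_\mc{S},\rho_\mc{S})$ canonical.

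For the weakly constructible statement, I would first invoke Theorem \ref{weakequivalence}, which under the acyclicity assumption gives the adjoint equivalence
\[
\Dsternwc \;\underset{p^*}{\overset{Rp_*}\rightleftarrows}\; D^*(\mc{S})
\]
for $*=+$ in general and for $*=b$ once $X$ has finite homological dimension. Then I would compose with the equivalence $\Sh{\mc{S}}\simeq\Rep(Q_\mc{S},\rho_\mc{S})$ described in Section \ref{Kap4.2}: the functor sending $\mc{F}$ to the stalk data $\bigl(\Gamma(p(\Staralpha),\mc{F})\bigr)_{S\in\mc{S}}$ with restriction maps along each arrow is exact, has an exact inverse, and visibly respects the path-identification relations. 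Because it is an exact equivalence of abelian categories, it lifts to an equivalence $D^*(\Sh{\mc{S}})\simeq D^*(\Rep(Q_\mc{S},\rho_\mc{S}))$ for $*=+,b$. Composing yields the first asserted equivalence.

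For the constructible statement, I would run the same chain through the ``finitely generated stalks'' versions. Applying the second theorem of Section \ref{Kap4} (the constructible refinement of Theorem \ref{weakequivalence}) gives
\[
\Dbc \;\sira\; D^b_{\sh{\mc{S}}}(\Sh{\mc{S}});
\]
the subsequent Proposition identifies the right-hand side canonically with $D^b(\sh{\mc{S}})$; and finally the exact equivalence $\sh{\mc{S}}\simeq\Repee(Q_\mc{S},\rho_\mc{S})$ (just the previous equivalence restricted to sheaves with finitely generated stalks, which corresponds to representations with finitely generated stalks) lifts to $D^b(\sh{\mc{S}})\simeq D^b(\Repee(Q_\mc{S},\rho_\mc{S}))$. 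Composing produces the second asserted equivalence.

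Since every ingredient is either stated or proved earlier in the section, there is no genuine obstacle; the only points worth double-checking are (a) that the composition of $Rp_*$ with the stalk-taking functor really is the natural ``sections over stars of strata'' functor, so that the resulting equivalence is canonical, and (b) that the equivalence $\Sh{\mc{S}}\simeq\Rep(Q_\mc{S},\rho_\mc{S})$ restricts to $\sh{\mc{S}}\simeq\Repee(Q_\mc{S},\rho_\mc{S})$; both are immediate from the construction, so the proof is really just the assembly of a commutative diagram of equivalences.
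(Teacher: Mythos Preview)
Your proposal is correct and matches the paper's approach exactly: the paper presents Theorem \ref{quivers} simply as a summary of the results of the section, with no further argument, and the chain of equivalences you describe (Theorem \ref{weakequivalence}, its constructible refinement, the proposition identifying $D^b(\sh{\mc{S}})$ with $D^b_{\sh{\mc{S}}}(\Sh{\mc{S}})$, and the exact equivalences $\Sh{\mc{S}}\simeq\Rep(Q_\mc{S},\rho_\mc{S})$ and $\sh{\mc{S}}\simeq\Repee(Q_\mc{S},\rho_\mc{S})$) is precisely that summary made explicit.
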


\section{Formality for the 2-sphere}\label{Kap5}

We would now like to prove formality for the 2-sphere $\Kugelschale$
stratified in several points and their complement. The clue is to
use Theorem \ref{quivers} to transfer the problem of formality to a
category of representations of quivers. Since we are going to
encounter several stratifications during this chapter, we are going
to carry the relevant stratification in the notation of the
constructible derived category; hence from now on we will write
$D^b_{c,\mc{S}}(X)$ instead of $\Dbc$. For the remainder of this
paper we assume the base ring $\Ring$ to be a principal ideal
domain.

Denote by $\Sn$ the stratification of the 2-sphere in $n$ points and
their complement (this stratification is not unique, but it does not
matter for our purposes which points we choose). $\Snull$ denotes
the trivial stratification. We will need to ``acyclify'' those
stratifications; hence we would now like to introduce certain
acyclic stratifications of the 2-sphere: Take $n$ points ($n \geq
2$) on a great circle on the sphere, and stratify the sphere into
those points, the circle segments between them and the two
hemi-spheres. We denote this stratification by $\An$. This
stratification is acyclic; the proof is not difficult and can be
found in \cite{Balthasar}. Obviously it is not crucial that the
points are on a great circle; if they are not, it is still be
possible to connect them via a deformed circle, and the resulting
stratification will still be acyclic.

\subsection{The trivial stratification}\label{Kap5.1} First, we would like to consider the trivial
stratification $\Snull$  on $\Kugelschale$ and the corresponding
constructible derived category $D^b_{c,\Snull}(\Kugelschale)$. We
know that this category is generated by the constant sheaf
$\cs\Ring\Kugelschale$. It follows from \eqref{equivalence} that for
any injective resolution $I$ of the constant sheaf,
$D^b_{c,\Snull}(\Kugelschale)$ is equivalent to $\mc{D}^f_{\End I}$.
%(which in this case is the subcategory of $(\Per{\End I})^o$
%generated by $\End I$)
We will now show that the dg algebra $\End I$ is formal:

\begin{proposition}\label{trivialstratification}
Let $I$ be an injective resolution of the constant sheaf
$\cs\Ring\Kugelschale$ on the 2-sphere. Then $\End I$ is
quasi-equivalent to $\Ring[t]/t^2$, where t lives in degree 2. In
particular, $\End I$ is formal.
\end{proposition}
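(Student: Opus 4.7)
The plan is to apply Proposition \ref{hinreichendAzyklisch} with a small model of $\cs\Ring\Kugelschale$ coming from the combinatorial description of Section \ref{Kap4}. First I would pass from $\Snull$ to the acyclic stratification $\Azwei$ (two points $p_1,p_2$ on a great circle, the two arcs $a_1,a_2$ between them, and the two open hemispheres $h_1,h_2$), so that Theorem \ref{quivers} applies. Under the resulting equivalence $\Dbwc\sira D^b(\Repee(Q_{\Azwei},\rho_{\Azwei}))$, the constant sheaf $\cs\Ring\Kugelschale$ corresponds to the constant representation $V$ all of whose stalks equal $\Ring$ and whose structural maps are identities.

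The next step is to build a bounded projective resolution of $V$ in $\Repee(Q_{\Azwei},\rho_{\Azwei})$. Writing $P_v$ for the indecomposable projective at a vertex $v$, the simple quotients of $V$ are indexed by the sources of $Q_{\Azwei}$, namely the two points $p_1,p_2$, so the projective cover of $V$ is $P_{p_1}\oplus P_{p_2}$; iterating the construction on successive kernels yields a length-two resolution
$$\Omega:\quad 0\to P_{h_1}\oplus P_{h_2}\to P_{a_1}\oplus P_{a_2}\to P_{p_1}\oplus P_{p_2}\to 0$$
placed in degrees $-2,-1,0$. Each $\Omega^p$ is projective in $\Repee(Q_{\Azwei},\rho_{\Azwei})$, hence all higher $\Ext$-groups between any two $\Omega^p,\Omega^q$ vanish in that abelian category; via Theorem \ref{weakequivalence} together with the full faithfulness of $\Dbwc\hra D^b(\Kugelschale)$, this vanishing transports to $\Sh\Kugelschale$. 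Proposition \ref{hinreichendAzyklisch} then reduces the problem to showing that $\End\Omega$ is quasi-equivalent to $\Ring[t]/t^2$.

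To analyse $\End\Omega$ I would exploit the identity $\Hom(P_v,P_w)=(P_w)_v$. Since the arrows of $Q_{\Azwei}$ flow strictly from points through arcs to hemispheres, every $\Hom$ in the wrong direction vanishes, and $\End\Omega$ is concentrated in degrees $0,1,2$. Its cohomology must equal
$$\Ext^*_{\Sh\Kugelschale}(\cs\Ring\Kugelschale,\cs\Ring\Kugelschale)=H^*(\Kugelschale,\Ring)=\Ring\oplus 0\oplus\Ring.$$
Pick any cocycle $t\in\End^2\Omega$ whose class generates $H^2$; the graded subspace
$$\mc{U}\;:=\;\Ring\cdot\mathrm{id}_\Omega\;\oplus\;0\;\oplus\;\Ring\cdot t$$
is automatically a sub-dg-algebra, since $t$ is closed ($\End^3\Omega=0$), $t^2=0$ ($\End^4\Omega=0$), and the identity acts by identity. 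The inclusion $\mc{U}\hra\End\Omega$ induces isomorphisms on all cohomology groups, and $\mc{U}\cong\Ring[t]/t^2$ has zero differential; concatenating with the quasi-equivalence $\End I\sim\End\Omega$ provided by Proposition \ref{hinreichendAzyklisch} gives the claim.

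The main obstacle is the passage from $\Ext$-vanishing in the representation category to $\Ext$-vanishing in $\Sh\Kugelschale$, needed to legitimately apply Proposition \ref{hinreichendAzyklisch}. This requires combining the acyclicity of $\Azwei$ via Theorem \ref{weakequivalence} with the fact that $\Dbwc\hra D^b(\Kugelschale)$ is fully faithful, in order to reduce a sheaf-theoretic computation to a purely combinatorial one.
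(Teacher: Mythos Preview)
Your proof is correct and follows the same overall strategy as the paper's: refine $\Snull$ to the acyclic stratification $\Azwei$, pass to the quiver representation category via Theorem~\ref{quivers}, resolve the constant representation $C$ by objects with no higher mutual $\Ext$, invoke Proposition~\ref{hinreichendAzyklisch}, and then exhibit $\Ring[t]/t^2$ inside the resulting endomorphism dg algebra. Two differences are worth noting. First, you resolve $C$ by the indecomposable \emph{projectives} $P_v$, whereas the paper's objects $I_S$ are in fact the indecomposable \emph{injectives} of $\Repee(Q_{\Azwei},\rho_{\Azwei})$ and the resolution~\eqref{resolutiontrivialstrat} is the minimal injective resolution; these are dual choices and both yield an $\End$-complex concentrated in degrees $0,1,2$. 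Second, the paper pins down a specific degree-$2$ representative and verifies the sub-algebra property via an explicit table of basis elements and differentials (Proposition~\ref{formalitytrivialstratification}), while your argument bypasses this: since $\End^{<0}\Omega=0$ one has $\ker d^0=H^0=\Ring\cdot 1$, and since $\End^{>2}\Omega=0$ any lift $t$ of a generator of $H^2$ automatically satisfies $t^2=0$, so $\Ring\cdot 1\oplus\Ring\cdot t$ is a sub-dg-algebra for free. That is a genuine streamlining in this trivial-stratification case; the paper's explicit computation is carried out chiefly as a template for the later one- and $n$-point cases in Section~\ref{Kap5}, where such degree bounds alone no longer force the required multiplicative compatibilities.
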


The standard proof for this statement would exploit the fact that
$\End I$ calculates the cohomology of the 2-sphere, and to directly
give a quasi-isomorphism $H(\End I) \ra \End I$. However, we would
like to give a different proof as a preparation for more complex
cases. We will break down the proof into several statements; the
crucial part is Proposition \ref{formalitytrivialstratification}.

First we need to acyclify the trivial stratification. For this we
take the stratification $\Azwei$ (as introduced at the beginning of
section \ref{Kap5}), which consists of two points, two hemi-equators
and two hemi-spheres. We denote the two points by $P_1$ and $P_2$,
the hemi-equators by $E_1$ and $E_2$, and the hemi-spheres by $H_1$
and $H_2$.

The quiver $(Q_{\Azwei}, \rho_{\Azwei})$ associated to this
stratification, as introduced in section \ref{Kap4.2}, is given by
\begin{equation*} \xymatrix{\bullet & \bullet\\
\bullet \ar[u] \ar[ur]& \bullet \ar[u] \ar[ul]\\ \bullet \ar[u]
\ar[ur] & \bullet \ar[u] \ar[ul]}
\end{equation*}
with relations that identify all paths that have the same starting
and end point. In the diagram above, the two upper vertices
correspond to the hemi-spheres, the middle ones to the hemi-equators
and the lower ones to the two points in the stratification. By
definition of $(Q_{\Azwei}, \rho_{\Azwei})$ we should also have
arrows from the lower to the upper vertices; however, due to the
relations we can drop those.

We would now like to transfer our problem to a category of
representations of this quiver, using the natural equivalences of
categories
$$\Rep(Q_{\Azwei},\rho_{\Azwei}) \sira
\Sh{\Azwei} \sira
\operatorname{Sh}_{w,\hspace{0,5mm}c,\hspace{0,5mm}
\Azwei}(\Kugelschale)
$$
that we constructed in the previous section. Under these
equivalences, the representation corresponding to the constant sheaf
$\cs\Ring\Kugelschale$ is
\begin{equation*}
\xymatrix{& \Ring & \Ring\\
C \hspace{4mm}:= & \Ring \ar[u] \ar[ur]& \Ring \ar[u] \ar[ul]\\
& \Ring \ar[u] \ar[ur] & \Ring \ar[u] \ar[ul]}
\end{equation*}
with all arrows carrying the identity map.

\begin{lemma}\label{sufficientlyacyclic}
Let $J$ be a bounded resolution of the representation $C$ that is
sufficiently acyclic in the sense that $\Ext^\exponentzwei(J^m, J^n)
= 0$ for all $\exponentzwei > 0$ and all $m,n \in \BZ$. Then our dg
algebra $\End I$ from Proposition \ref{trivialstratification} is
quasi-equivalent to the dg algebra $\End J$.
\end{lemma}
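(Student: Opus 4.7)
The plan is to apply Proposition \ref{hinreichendAzyklisch} in the ambient abelian category $\Sh\Kugelschale$. First I would use the chain of equivalences $\Rep(Q_{\Azwei},\rho_{\Azwei}) \sira \Sh{\Azwei} \sira \operatorname{Sh}_{w,\hspace{0,5mm}c,\hspace{0,5mm}\Azwei}(\Kugelschale)$ to view $J$ as a bounded complex of weakly constructible sheaves on $\Kugelschale$, which I still denote by $J$. Since $J$ resolves $C$, it is quasi-isomorphic to $\cs\Ring\Kugelschale$ in $D^+(\Sh\Kugelschale)$; composing with the quasi-isomorphism $\cs\Ring\Kugelschale \ra I$ and lifting up to homotopy (possible because $I$ is a bounded-below complex of injectives) produces a chain-map quasi-isomorphism $J \ra I$. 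Thus $I$ serves as the injective resolution of $\Omega = J$ demanded by Proposition \ref{hinreichendAzyklisch}.

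The heart of the argument, and the only step that is not routine, will be to verify the Ext-vanishing hypothesis $\Ext^\exponentzwei_{\Sh\Kugelschale}(J^m, J^n) = 0$ for $\exponentzwei > 0$ in the ambient sheaf category. This is where the acyclicity of the stratification $\Azwei$ plays its role: the full triangulated subcategory $D^b_{w,\hspace{0,5mm}c,\hspace{0,5mm}\Azwei}(\Kugelschale) \subset D^b(\Sh\Kugelschale)$ has Hom groups that coincide with those of the ambient derived category, and by Theorem \ref{weakequivalence} (or equivalently Theorem \ref{quivers}) it is equivalent to $D^b(\Rep(Q_{\Azwei},\rho_{\Azwei}))$. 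Composing these identifications yields a natural isomorphism $\Ext^\exponentzwei_{\Sh\Kugelschale}(J^m, J^n) \simeq \Ext^\exponentzwei_{\Rep(Q_{\Azwei},\rho_{\Azwei})}(J^m, J^n)$, and the right-hand side vanishes for $\exponentzwei > 0$ by the sufficient-acyclicity hypothesis on $J$.

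Once both hypotheses of Proposition \ref{hinreichendAzyklisch} are in place, it immediately delivers a quasi-equivalence between $\End J$ and $\End I$. A final observation to record is that $\End J$ is unchanged under the fully faithful embedding $\Rep(Q_{\Azwei},\rho_{\Azwei}) \hra \Sh\Kugelschale$, since in each degree it is a direct sum of Hom groups that agree in the two categories; so there is no ambiguity in what $\End J$ means. The main obstacle is really just the conceptual one of juggling the two ambient abelian categories at once: resolutions of $\cs\Ring\Kugelschale$ in the big category of all sheaves, and resolutions of $C$ in the small category of representations. Thanks to the acyclicity of $\Azwei$, the two pictures agree at the level of Ext groups, and Proposition \ref{hinreichendAzyklisch} does the rest.
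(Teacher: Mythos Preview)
Your proof is correct and follows the same approach as the paper's, which in a single sentence invokes the exact fully faithful functor $\Rep(Q_{\Azwei},\rho_{\Azwei}) \to \operatorname{Sh}_{w,\hspace{0,5mm}c,\hspace{0,5mm}\Azwei}(\Kugelschale)$ and then applies Proposition~\ref{hinreichendAzyklisch}. You have spelled out the one step the paper leaves implicit, namely that the Ext-vanishing transfers from the quiver category to $\Sh\Kugelschale$ via the derived-level equivalence of Theorem~\ref{weakequivalence}, and you correctly observe that $\End J$ is unchanged under the fully faithful embedding.
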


\begin{proof}
Using that the concatenation $\Rep(Q_{\Azwei},\rho_{\Azwei}) \ra
\Sh{\Azwei} \ra \operatorname{Sh}_{w,\hspace{0,5mm}c,\hspace{0,5mm}
\Azwei}(\Kugelschale)$ is exact and fully faithful, the result
follows by applying Proposition \ref{hinreichendAzyklisch}.
\end{proof}

The next thing to do is resolving $C$ by sufficiently acyclic
objects. As sufficiently acyclic objects, we would like to take, for
every stratum $S$ of $\Azwei$, the representation $I_S$
corresponding to the constant sheaf along the closure of $S$. To be
more precise, for $i:\ol{S}\hra\Kugelschale$ the inclusion, $I_S$ is
the image of $i_!\cs\Ring{\ol{S}}$ under the equivalence
$\operatorname{Sh}_{w,c,\Azwei} \overset\sim\ra
\Rep(Q_{\Azwei},\rho_{\Azwei})$. This procedure yields the following
representations of $(Q_{\Azwei},\rho_{\Azwei})$:
\begin{equation*}
\xymatrix{& \Ring & 0\\
I_{H_1} = & \Ring \ar[u] \ar[ur]& \Ring \ar[u] \ar[ul]\\
& \Ring \ar[u] \ar[ur] & \Ring \ar[u] \ar[ul]} \hspace{10mm}
\xymatrix{& 0 & \Ring\\
I_{H_2} = & \Ring \ar[u] \ar[ur]& \Ring \ar[u] \ar[ul]\\
& \Ring \ar[u] \ar[ur] & \Ring \ar[u] \ar[ul]}
\end{equation*}
\begin{equation*}
\xymatrix{& 0 & 0\\
I_{E_1} = & \Ring \ar[u] \ar[ur]& 0 \ar[u] \ar[ul]\\
& \Ring \ar[u] \ar[ur] & \Ring \ar[u] \ar[ul]} \hspace{10mm}
\xymatrix{& 0 & 0\\
I_{E_2} = & 0 \ar[u] \ar[ur]& \Ring \ar[u] \ar[ul]\\
& \Ring \ar[u] \ar[ur] & \Ring \ar[u] \ar[ul]}
\end{equation*}
\begin{equation*}
\xymatrix{& 0 & 0\\
I_{P_1} = & 0 \ar[u] \ar[ur]& 0 \ar[u] \ar[ul]\\
& \Ring \ar[u] \ar[ur] & 0 \ar[u] \ar[ul]} \hspace{10mm}
\xymatrix{& 0 & 0\\
I_{P_2} = & 0 \ar[u] \ar[ur]& 0 \ar[u] \ar[ul]\\
& 0 \ar[u] \ar[ur] & \Ring \ar[u] \ar[ul]}
\end{equation*}
where on the arrows we have the identity on $\Ring$, wherever
possible, and the the zero map everywhere else. These objects are
indeed sufficiently acyclic:

\begin{lemma}\label{keineerweiterungen}
For arbitrary strata $S,T \in \Azwei$ and every $q > 0$ we have that
$$\Ext^\exponentzwei(I_\Str, I_\Strr) = 0.$$
\end{lemma}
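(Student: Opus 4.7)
The plan is to transfer the question of vanishing of $\Ext$ in $\Rep(Q_{\Azwei}, \rho_{\Azwei})$ to a sheaf cohomology computation on $\Kugelschale$ via the equivalences of Theorems \ref{weakequivalence} and \ref{quivers}, and then resolve it by elementary topology. Under the composite equivalence $D^b(\Rep(Q_{\Azwei}, \rho_{\Azwei})) \sira \Dbwc$, the representation $I_S$ corresponds to the sheaf $i_{S!}\cs\Ring{\ol S}$, where $i_S:\ol S \hra \Kugelschale$ denotes the closed inclusion. Since $\Dbwc$ is a full triangulated subcategory of $D^b(\Kugelschale)$, the $\Ext$ groups I want agree with the sheaf $\Ext$ groups $\Ext^\exponentzwei_{D^b(\Kugelschale)}(i_{S!}\cs\Ring{\ol S}, i_{T!}\cs\Ring{\ol T})$.

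Next, since $i_T$ is a closed embedding, $i_{T!} = i_{T*}$ is exact, so the $(i_T^*, i_{T*})$-adjunction reduces the computation to an $\Ext$ group on $\ol T$:
$$\Ext^\exponentzwei_{\ol T}(j_!\cs\Ring{Z}, \cs\Ring{\ol T}),$$
where $Z := \ol S \cap \ol T$ and $j:Z\hra\ol T$ is the closed inclusion; here one uses that $i_T^*i_{S!}\cs\Ring{\ol S}$ is $\cs\Ring{Z}$ extended by zero. Let $k:U\hra\ol T$ be the open complement of $Z$. Applying $R\Hom(\cdot, \cs\Ring{\ol T})$ to the standard short exact sequence
$$0 \ra k_!\cs\Ring U \ra \cs\Ring{\ol T} \ra j_!\cs\Ring{Z} \ra 0$$
and using $k^! = k^*$ yields the distinguished triangle
$$R\Hom(j_!\cs\Ring Z, \cs\Ring{\ol T}) \ra R\Gamma(\ol T,\cs\Ring{\ol T}) \ra R\Gamma(U, \cs\Ring U) \overset{[1]}{\ra}$$
so the desired vanishing reduces to showing that the restriction map $H^*(\ol T) \ra H^*(U)$ is an isomorphism in positive degrees and a surjection in degree zero.

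Finally, I would verify this by inspection of the six-stratum structure of $\Azwei$. In every case $\ol T$ is contractible, being either a point, a closed arc (a hemi-equator with its two endpoints) or a closed disc (a closed hemi-sphere), so $H^{\exponentzwei}(\ol T)=0$ for $\exponentzwei>0$. A short case analysis over the six possibilities for $S$ shows that $U = \ol T \setminus (\ol S \cap \ol T)$ is either empty or itself contractible: for example, removing a closed sub-arc of the boundary of a closed hemi-sphere leaves a space that visibly deformation retracts to a point, as does removing a single boundary point; the sub-cases in which $\ol T$ is an arc or a point are trivial. Since both $\ol T$ and (when nonempty) $U$ are connected, the restriction $H^0(\ol T)\ra H^0(U)$ is an isomorphism, and $H^{\exponentzwei}(U)=0$ for $\exponentzwei>0$. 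The long exact sequence then forces $\Ext^\exponentzwei_{\ol T}(j_!\cs\Ring Z, \cs\Ring{\ol T}) = 0$ for $\exponentzwei>0$, as desired.

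The only genuinely non-formal ingredient is the final topological inspection, which is the main obstacle in the sense of being the only step that uses the specific geometry of $\Kugelschale$; but each case reduces to exhibiting an explicit deformation retract, so the difficulty is really just case-by-case bookkeeping.
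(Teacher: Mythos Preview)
Your argument is correct and follows the same strategy the paper indicates: transfer the question to the corresponding sheaves on $\Kugelschale$ and settle it by an elementary topological inspection. The paper itself only records the reduction to sheaves and defers the case-by-case computation to \cite{Balthasar}; your proposal supplies precisely those omitted details, via the adjunction with $i_{T*}$ and the open--closed triangle on $\ol T$, and the resulting case analysis is complete and accurate for all pairs of strata in $\Azwei$.
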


\begin{proof}
It suffices to show the same statement for the corresponding
sheaves. The explicit arguments can be found in \cite{Balthasar}.
\end{proof}

Next, we need to consider the morphisms between our sufficiently
acyclic objects. The morphism spaces between the various $I_S$ are
free $\Ring$-modules. They vanish in all cases but the following, in
which they are free of rank 1:
\begin{alignat*}{2} &\Hom(I_{H_j}, I_{H_j}) & \qquad\qquad
& \text{ for } j = 1,2;\\
&\Hom(I_{H_j}, I_{E_i}) & \qquad\qquad & \text{ for } i = 1,2 \text{
und }j = 1,
2;\\
&\Hom(I_{H_j}, I_{P_i}) & \qquad\qquad & \text{ for } i = 1,2 \text{
und
} j = 1,2;\\
&\Hom(I_{E_i}, I_{E_i}) & \qquad\qquad & \text{ for } i = 1,2;\\
&\Hom(I_{E_j}, I_{P_i}) & \qquad\qquad & \text{ for } i = 1,2 \text{
und }j = 1,
2;\\
&\Hom(I_{P_i}, I_{P_i}) & \qquad\qquad & \text{ for } i = 1,2.
\end{alignat*}

Each of these morphism spaces has a canonical generator, which is
given by the identity on $\Ring$, wherever possible, and zeros
everywhere else. The generator of $\Hom(I_{H_i}, I_{H_i})$ is
denoted by $h_i$, analogously we denote by $e_i$ and $p_i$ the
generators of $\Hom(I_{E_i},I_{E_i})$ and $\Hom(I_{P_i}, I_{P_i})$,
respectively. The generator of $\Hom(I_{H_j}, I_{E_i})$ is denoted
by $\he{ji}$, that of $\Hom(I_{H_j}, I_{P_i})$ by $\hp{ji}$ and that
of $\Hom(I_{E_j}, I_{P_i})$ by $e_{ji}$. \footnote{The underlying
principle of these notations is as follows: If a morphism starts at
$I_{H_j}$, we use $h$ for notation, if it starts at $I_{E_j}$, we
use $e$. In most cases the generator will be determined uniquely by
the starting representation and the indices used; the only
exceptions are the generators of $\Hom(I_{H_j}, I_{E_i})$ and
$\Hom(I_{H_j}, I_{P_i})$, where we add the range as a superscript.}

Consider now the following resolution $J$ of $C$:
\begin{equation}\label{resolutiontrivialstrat}
C \hra I_{H_1} \oplus I_{H_2} \overset{\begin{pmatrix} \he{11} & -\he{21} \\
\he{12} & -\he{22}
\end{pmatrix}}\ra I_{E_1} \oplus I_{E_2} \overset{\begin{pmatrix}
e_{11}& -e_{21}\\ -e_{12} & e_{22}
\end{pmatrix}}\ra
I_{P_1} \oplus I_{P_2}
\end{equation}
Exactness of the sequence is easily checked on stalks.

\begin{proposition}\label{formalitytrivialstratification}
The endomorphism-dg-algebra $\End J$ of the above resolution of $C$
is quasi-isomorphic to its cohomology.
\end{proposition}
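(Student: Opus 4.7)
The plan is to exhibit a small sub-dg-algebra $\mc{U}\subset\End J$ carrying the zero differential, isomorphic to $\Ring[t]/t^2$ (with $t$ in degree $2$), such that the inclusion $\mc{U}\hookrightarrow\End J$ is a quasi-isomorphism; combined with the tautology $\mc{U}=H(\mc{U})$ this yields the zig-zag of quasi-isomorphisms that expresses formality, in the same spirit as the proof of Theorem \ref{formalityfornspheres}.

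By Lemma \ref{sufficientlyacyclic}, $\End J$ is quasi-equivalent to $\End I$ for $I$ an injective resolution of $\cs\Ring{\Kugelschale}$, so
$$H(\End J)\;\cong\;\Ext^*_{\Sh{\Kugelschale}}(\cs\Ring{\Kugelschale},\cs\Ring{\Kugelschale})\;\cong\;H^*(\Kugelschale,\Ring)\;\cong\;\Ring[t]/t^2,$$
concentrated in degrees $0$ and $2$. I then need an explicit cycle $t\in\End J^2=\Hom(I_{H_1}\oplus I_{H_2},I_{P_1}\oplus I_{P_2})$ whose class generates $H^2$. The cycle condition is automatic, since $J$ is concentrated in degrees $0,1,2$ and hence $\End J^n=0$ for $n\geq 3$. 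To check that a chosen candidate is not a boundary one expands the differential $d:\End J^1\to\End J^2$ in the $\Ring$-bases $\he{ji},e_{ji},\hp{ji}$ using the matrices $d_{J,0},d_{J,1}$ of the resolution \eqref{resolutiontrivialstrat}; this finite linear-algebra calculation shows that the cokernel is free of rank one over $\Ring$ and supplies an explicit $t$.

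Set $\mc{U}:=\Ring\cdot\mathrm{id}_J\oplus\Ring\cdot t\subset\End J$. The differential vanishes on $\mc{U}$ (both generators are cycles and $\End J^{-1}=0$), and $\mc{U}$ is closed under multiplication because the only potentially new product is $t\cdot t\in\End J^4=0$. Hence $\mc{U}$ is a sub-dg-algebra isomorphic to $\Ring[t]/t^2$ with trivial differential, and by the choice of $t$ the inclusion induces an isomorphism on cohomology. Combining this with $\mc{U}=H(\mc{U})$ yields the required zig-zag
$$\End J\;\hookleftarrow\;\mc{U}\;=\;H(\mc{U})\;\sira\;H(\End J).$$

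The main obstacle is the explicit identification of a representative $t\in\End J^2$, i.e.\ computing the image of $d:\End J^1\to\End J^2$. Although elementary, this requires careful bookkeeping with $2\times 2$ matrices of morphisms expressed in the generators $\he{ji},e_{ji},\hp{ji}$ to confirm that a specific candidate is not a boundary.
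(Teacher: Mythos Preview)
Your proposal is correct and follows essentially the same approach as the paper: both arguments construct a quasi-isomorphism by choosing an explicit degree-$2$ cycle $t$ and observing that $\Ring\cdot 1\oplus\Ring\cdot t$ sits inside $\End J$ as a sub-dg-algebra with vanishing differential (the paper phrases this as a map $H\mc{E}\to\mc{E}$ sending the generator of $H^2$ to the representative $\hp{11}$, which amounts to the same thing). The only difference is that the paper carries out the full basis-and-differential tabulation of $\End J$ directly, whereas you shortcut the identification $H(\End J)\cong\Ring[t]/t^2$ via Lemma~\ref{sufficientlyacyclic} and the known cohomology of $\Kugelschale$; but as you yourself note, the explicit linear-algebra computation of $\im d^1$ is still required to exhibit a concrete generator $t$, so in practice the work is nearly identical.
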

This result is the essential step in the proof of Proposition
\ref{trivialstratification}: By Lemma \ref{sufficientlyacyclic},
$\End I$ is quasi-equivalent to $\End J$, which in turn is
quasi-isomorphic to its cohomology by the above proposition, and the
cohomology is easily seen to be $\Ring[t]/t^2$ (where $t$ lives in
degree 2). So it remains to prove Proposition
\ref{formalitytrivialstratification}.

\begin{proof}
This proof is based on a hands-on calculation of $\End J$ and its
cohomology. For shortness of notation we write $\mc{E}$ for $\End
J$. Since $\Hom(\cdot,\cdot)$ commutes in both entries with finite
direct sums, we get that for any $m\in\BZ$, $\mc{E}^m$ is a direct
sum of free $\Ring$-modules, which we would now like to give a basis
of. Before we get down to this, however, we would like to fix the
following notational conventions: Morphisms between direct sums are
written as matrices, as is the usual convention. However, matrices
of the form
$\begin{pmatrix} g & 0 \\
0 & 0 \end{pmatrix}$, $\begin{pmatrix} 0 & g \\
0 & 0 \end{pmatrix}$ etc. are abbreviated to $g$, as are tupels of
the form $(\dots 0, g , 0\dots) \in \prod\limits_{p \in \BZ}
\Hom(I^p, I^{p+m})$. It should always be clear from context which
morphism is meant.

The following table gives a basis of $\mc{E}$, and the image of the
basis elements under the differential $d_{\mc{E}}$.

\begin{longtable}{|l|lcl|}
\nopagebreak\hline m & basis of $\mc{E}^m$& & image under
$d_\mc{E}$\\\hline\endhead\nopagebreak \hline \endfoot\endlastfoot 0
& $h_1$ & $\mapsto$ & $\he{11} + \he{12}$\\* &$h_2$ & $\mapsto$ &
$-\he{21} -\he{22}$\\* &$e_1$ & $\mapsto$ & $(\he{21}-\he{11}) +
(e_{11} - e_{12})$\\* &$e_2$ & $\mapsto$ & $(\he{22}-\he{12}) +
(e_{22} - e_{21})$\\* &$p_1$ & $\mapsto$ & $e_{21}-e_{11}$\\* &$p_2$
& $\mapsto$ & $e_{12}-e_{22}$
\\ \hline 1 & $\he{11}$ & $\mapsto$ & $\hp{11} - \hp{12}$\\*
&$\he{12}$ & $\mapsto$ & $\hp{12} - \hp{11}$\\ &$\he{21}$ &
$\mapsto$ & $\hp{21} - \hp{22}$\\ &$\he{22}$ & $\mapsto$ & $\hp{22}
- \hp{21}$\\* &$e_{11}$ & $\mapsto$ & $\hp{11} - \hp{21}$\\
&$e_{12}$ & $\mapsto$ & $\hp{12} - \hp{22}$\\* &$e_{21}$ & $\mapsto$
& $\hp{11} -
\hp{21}$\\* &$e_{22}$ & $\mapsto$ & $\hp{12} - \hp{22}$\\
\hline 2 & $\hp{11}$ & $\mapsto$ & $0$\\* &$\hp{12}$ & $\mapsto$ &
$0$\\* &$\hp{21}$ & $\mapsto$ & $0$\\* &$\hp{22}$ & $\mapsto$ &
$0$\\\hline
\end{longtable}
For $m \neq 0,1,2$, we have $\mc{E}^m= 0$. Next, we need to
calculate the cohomology of $\mc{E}$; please note that we are not
going to differentiate notationally between an element of the kernel
of $d_\mc{E}$ and the corresponding element in the cohomology; it
should always be clear from context what is meant.

It is easy to see that $1$ yields a basis of $\ker d_\mc{E}^0$ and
hence a basis of $H^0\mc{E}$. A basis of $\ker d_\mc{E}^1$ is given
by
\begin{equation*} \{d_\mc{E}^0(f) \vert f \text{ passes through the above
basis of } \mc{E}^0 \text{ except for } h_2\}
\end{equation*}
which also generates $\im d_\mc{E}^0$, hence $H^1\mc{E}$ vanishes.
Finally, a basis of $\im d_\mc{E}^2$ is given by
\begin{equation*}
\{d_\mc{E}^1(\he{11}), d_\mc{E}^1(\he{21}), d_\mc{E}^1(e_{11})\},
\end{equation*}
while a basis of $\ker d_\mc{E}^2$ is given by our basis of
$\mc{E}^2$. Accordingly, $H^2\mc{E}$ is free of rank 1 and is
generated by $\hp{11}$.\\
It is easy to see that the canonical inclusion $\ker d^0 \hra
\mc{E}^0$, and the morphism $H^2\mc{E}\ra \mc{E}^2$ that maps the
cohomology class of $\hp{11}$ to its representant $\hp{11}$, combine
to a quasi-isomorphism of dg algebras $H\mc{E} \ra \mc{E}$, and we
are done.
\end{proof}

\subsection{The 2-sphere and a point} Next, we would like to discuss
the 2-sphere stratified in a point $\pt$ and its complement. We had
denoted this stratification by $\Seins$. The corresponding
generators of $D^b_{c,\Seins}(\Kugelschale)$ are the skyscraper at
$\pt$ and the constant sheaf on the sphere, shifted by 1.

%However, it has proven useful to shift the second generator by one.
%This goes back to the equivariant case as discussed in the
%introduction: In the analogous equivariant setting, generators of
%the equivariant derived category are taken to be the equivariant
%simple perverse sheaves (modulo isomorphism), hence generators are
%shifted by their complex dimensions. For further details we refer
%the reader to \cite{Asterisque}.

Hence we now need to choose an injective resolution $I$ of
$\mc{W}_{\pt}\oplus\cs\Ring\Kugelschale[1]$, where $\mc{W}_{\pt}$
denotes the skyscraper at $\pt$. As previously, by
\eqref{equivalence} we then have an equivalence
$D^b_{c,\Seins}(\Kugelschale) \simeq \mc{D}^f_{\End I}$. Hence the
following theorem, which shows that $\End I$ is formal, concludes
the proof of Theorem \ref{main1} (i) from the introduction.

\begin{theorem}\label{onepointstratification}
Denote by $\pt$ the point in the stratification $\Seins$ and let
$\mc{W}_{\pt}$ be the skyscraper at this point. For any injective
resolution $I$ of $\mc{W}_{\pt}\oplus\cs\Ring\Kugelschale[1]$ the
corresponding endomorphism-dg-algebra $\End I$ is formal.
\end{theorem}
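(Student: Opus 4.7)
The plan is to adapt the strategy of Proposition \ref{trivialstratification} by acyclifying the stratification, translating to representations of a quiver through Theorem \ref{quivers}, replacing $I$ by a sufficiently acyclic resolution, and concluding with an explicit computation in the spirit of Proposition \ref{formalitytrivialstratification}.

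First I would refine $\Seins$ to the acyclic stratification $\Azwei$ with the marked point chosen to be $\pt=P_1$. Under the equivalence of Theorem \ref{quivers}, $\mc{W}_\pt$ and $\cs\Ring\Kugelschale$ correspond to the representations $I_{P_1}$ and $C$ respectively. I set
\[
J \;:=\; I_{P_1}\,\oplus\, J_0[1],
\]
where $J_0$ is the resolution of $C$ from \eqref{resolutiontrivialstrat}. Then $J$ is a bounded resolution of $\mc{W}_\pt\oplus\cs\Ring\Kugelschale[1]$ all of whose components are direct sums of sufficiently acyclic $I_S$, so Lemma \ref{keineerweiterungen} combined with Proposition \ref{hinreichendAzyklisch} (exactly as in Lemma \ref{sufficientlyacyclic}) produces a quasi-equivalence $\End I\simeq \End J$. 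Since formality is invariant under quasi-equivalence, it suffices to prove that $\End J$ is formal.

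Next I would display $\End J$ as the block dg algebra
\[
\End J \;=\; \begin{pmatrix}\End I_{P_1} & \Hom(J_0[1], I_{P_1})\\ \Hom(I_{P_1},J_0[1]) & \End(J_0[1])\end{pmatrix}
\]
and read off its cohomology block by block. The upper-left contributes $\Ring$ in degree $0$, and the lower-right is canonically isomorphic to $\End J_0$, whose cohomology was determined in Proposition \ref{formalitytrivialstratification} (namely $\Ring$ in degree $0$ and $\Ring\cdot\hp{11}$ in degree $2$). Using the vanishings $\Hom(I_{P_1},I_{H_j})=\Hom(I_{P_1},I_{E_j})=0$ collected in Section \ref{Kap5.1}, the off-diagonal complexes are very short; their cohomologies compute $\Ext^\bullet(\mc{W}_\pt,\cs\Ring\Kugelschale)$ and $\Ext^\bullet(\cs\Ring\Kugelschale,\mc{W}_\pt)$, which one evaluates via the adjunction $i_{\pt\,!}\dashv i_\pt^!$ together with $i_\pt^!\cs\Ring\Kugelschale\simeq \cs\Ring\pt[-2]$ to be $\Ring$ in degrees $2$ and $0$ respectively. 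After the $[1]$-shift each off-diagonal contributes a single copy of $\Ring$ in degree $1$, so altogether $H^0\End J\cong\Ring^2$, $H^1\End J\cong\Ring^2$, $H^2\End J\cong\Ring$.

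Finally, in the spirit of the proof of Theorem \ref{formalityfornspheres}, I would choose explicit cocycle representatives for each generator of $H^\bullet(\End J)$: the two orthogonal idempotents $\mathrm{id}_{I_{P_1}}$ and $\mathrm{id}_{J_0[1]}$; the cycle $\hp{11}$ already produced in Proposition \ref{formalitytrivialstratification}; the cocycle $\xi\in\Hom(I_{P_1},J_0[1])^1$ obtained from the inclusion $I_{P_1}\hookrightarrow I_{P_1}\oplus I_{P_2}=J_0^2$; and the cocycle $\eta\in\Hom(J_0[1],I_{P_1})^1$ realized inside $\Hom(J_0,I_{P_1})^0$ by the map $J_0^0\to I_{P_1}$ that equals $\hp{11}$ on $I_{H_1}$ and zero on $I_{H_2}$. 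Let $\mc{U}\subset\End J$ be their $\Ring$-linear span. The main obstacle is to check that $\mc{U}$ is closed under the product: a direct inspection of the block-matrix composition shows that $\xi\eta$ equals $\hp{11}$ in the bottom-right block, that $\eta\xi$ lies in $\End(I_{P_1})^2=0$, that $\xi^2=\eta^2=0$, and that every product involving $\hp{11}$ vanishes for degree or support reasons; the two idempotents act on the remaining basis elements in the expected way. Hence $\mc{U}$ is a sub-dg-algebra on which the differential vanishes, the inclusion $\mc{U}\hookrightarrow\End J$ sends the five basis elements bijectively onto the generators of $H^\bullet(\End J)$ found in the previous step and is therefore a quasi-isomorphism, and the zig-zag $\End J\hookleftarrow\mc{U}\twoheadrightarrow H(\mc{U})=H(\End J)$ exhibits $\End J$ as formal.
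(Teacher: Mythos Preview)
Your proposal is correct and follows essentially the same route as the paper. You pass to the acyclic refinement $\Azwei$, build the same sufficiently acyclic resolution (your $I_{P_1}\oplus J_0[1]$ is the paper's $J$ up to reordering of summands), and then exhibit formality via the very same five cocycles the paper singles out: in the paper's notation these are $1$, $p_1^{(0)}=\mathrm{id}_{I_{P_1}}$, the degree-$1$ classes $\hp{11}$ and $p_1$ (your $\eta$ and $\xi$), and the degree-$2$ class $\hp{11}$, which the paper checks are multiplicatively closed and then uses to define a direct quasi-isomorphism $H\mc{E}\to\mc{E}$. The only difference is presentational: you organise $\End J$ as a $2\times 2$ block algebra and compute the off-diagonal cohomology conceptually via $i_\pt^!\cs\Ring{\Kugelschale}\simeq\cs\Ring\pt[-2]$, whereas the paper writes out an explicit basis table and reads off kernels and images by hand; your packaging is a bit cleaner, but the substance is identical.
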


\begin{proof}
The proof of this statement will be similar to the proof of
Proposition \ref{trivialstratification}. Again we need to acyclify
the stratification $\Seins$. For this we take the acyclic
stratification $\Azwei$, where one of the points of $\Azwei$ is
taken to be $\pt$. Under the equivalence
$$\Rep(Q_{\Azwei},\rho_{\Azwei}) \sira
\Sh{\Azwei} \sira
\operatorname{Sh}_{w,\hspace{0,5mm}c,\hspace{0,5mm}
\Azwei}(\Kugelschale)
$$
the representations corresponding to the skyscraper and the constant
sheaf on $\Kugelschale$ are
\begin{equation*}
\xymatrix{& 0 & 0\\
W:= & 0 \ar[u] \ar[ur]& 0 \ar[u] \ar[ul]\\
& \Ring \ar[u] \ar[ur] & 0 \ar[u] \ar[ul]} \hspace{20mm}
\xymatrix{& \Ring & \Ring\\
C:= & \Ring \ar[u] \ar[ur]& \Ring \ar[u] \ar[ul]\\
& \Ring \ar[u] \ar[ur] & \Ring \ar[u] \ar[ul]}
\end{equation*}

As in section \ref{Kap5.1}, we need to find a sufficiently acyclic
resolution of $W \oplus C[1]$; we can do this using the same objects
as in \eqref{resolutiontrivialstrat}, and get a resolution
\begin{equation*}
I_{H_1} \oplus I_{H_2} \overset{\begin{pmatrix}
\he{11} & -\he{21}\\
\he{12} & -\he{22}\\
0 & 0
\end{pmatrix}}\ra I_{E_1} \oplus I_{E_2} \oplus I_{P_1} \overset{\begin{pmatrix}
e_{11} & -e_{21} & 0\\ -e_{12} & e_{22} & 0
\end{pmatrix}}\ra
I_{P_1} \oplus I_{P_2}
\end{equation*}

Let us denote this resolution by $J$. By Lemma
\ref{keineerweiterungen} it is sufficiently acyclic in the sense of
Proposition \ref{hinreichendAzyklisch}, and an argument analogous to
Lemma \ref{sufficientlyacyclic} shows that $\End J$ is
quasi-equivalent to $\End I$. The following proposition concludes
our proof.
\end{proof}

\begin{proposition}\label{formalityonepointstratification}
For J the resolution of $W \oplus C[1]$ introduced in the previous
proof, we have that the corresponding endomorphism algebra $\End J$
is formal.
\end{proposition}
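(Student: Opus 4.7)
The plan is to mirror the hands-on computation in the proof of Proposition \ref{formalitytrivialstratification}. Write $\mc{E} := \End J$. Since $J$ is a three-term complex and all morphism spaces between the representations $I_S$ listed in Section \ref{Kap5.1} are free $\Ring$-modules of rank at most one, each $\mc{E}^m$ is a free $\Ring$-module with a canonical basis, and $\mc{E}^m = 0$ outside $-2 \leq m \leq 2$. The first step would be to tabulate this basis and compute the differential $d_\mc{E}(f) = d_J f - (-1)^{|f|} f d_J$ on each basis element, using the generators $h_i, e_i, p_i, \he{ji}, \hp{ji}, e_{ji}$. The new feature, compared to Proposition \ref{formalitytrivialstratification}, is the extra $I_{P_1}$ summand in the middle term of $J$ (which encodes $W$); it contributes further basis elements in degrees $-1$, $0$ and $1$ corresponding to morphisms starting or ending at this summand, and the formulas for $d_\mc{E}$ on those have to be worked out explicitly from the two non-zero entries of the differential $d_J$.

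Once this bookkeeping is done, the cohomology $H\mc{E}$ is read off by a direct kernel/image computation, exactly as in the table in the proof of Proposition \ref{formalitytrivialstratification}. The expected answer is the graded Ext algebra $\Ext^*(W\oplus C[1], W\oplus C[1])$, which one computes from $i_{\pt}^! \cs\Ring{S^2} \simeq \cs\Ring{\pt}[-2]$: it is concentrated in degrees $-1, 0, 1, 2$, of total rank five, with two idempotents in degree $0$ and one free generator in each of degrees $-1$, $+1$, $+2$. The concluding step is to construct a quasi-isomorphism of dg algebras $H\mc{E} \to \mc{E}$ by choosing a preferred cocycle representative for each cohomology class. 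Because $H\mc{E}$ has only five basis elements, concentrated in a narrow degree range, the number of products one has to verify is very small.

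The main obstacle is precisely this last step. In Proposition \ref{formalitytrivialstratification}, the cohomology was $\Ring[t]/t^2$ with $t$ in top degree $2$, so $t \cdot t = 0$ was automatic and there was essentially nothing to check about multiplicativity. Here, by contrast, the cohomology has non-trivial cross-terms between the skyscraper piece and the shifted constant sheaf piece: the product of the degree $-1$ class with the degree $+1$ class lands in degree $0$ and should realize one of the two idempotents in $H^0$, and composing a degree $+1$ class with the degree $+2$ generator could a priori produce a non-zero degree $+3$ class (which must then vanish for degree reasons in $\mc{E}$). Ensuring that the chosen representatives compose on the nose to these required values, rather than only up to a coboundary, is the delicate part. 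If a naive choice among the basic morphisms $h_i, p_i, \he{ji}, \hp{ji}, e_{ji}$ does not give a multiplicative splitting, one can fall back on the strategy used in Theorem \ref{formalityfornspheres}: exhibit a sub-dg-algebra $\mc{U} \subset \mc{E}$, spanned by judiciously chosen cocycle representatives together with the element(s) needed to witness the degree $+2$ class, such that both the inclusion $\mc{U} \hookrightarrow \mc{E}$ and the projection $\mc{U} \twoheadrightarrow H\mc{U} = H\mc{E}$ are quasi-isomorphisms. This yields the chain $\mc{E} \hookleftarrow \mc{U} \twoheadrightarrow H\mc{E}$ of quasi-isomorphisms, proving formality.
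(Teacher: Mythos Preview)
Your plan matches the paper's proof: tabulate $\mc{E} = \End J$ explicitly, compute its cohomology, and then lift $H\mc{E}$ back into $\mc{E}$ by a multiplicative choice of cocycle representatives. The paper does exactly this and finds that a direct quasi-isomorphism $H\mc{E} \to \mc{E}$ exists; the fallback through an intermediate sub-dg-algebra $\mc{U}$ is not needed here (that device only enters for the $n$-point stratification in the next subsection).

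Your anticipated cohomology, however, is off by a shift: there is no class in degree $-1$. One has $\Ext^q(C[1], W) = \Ext^{q-1}(C, W)$, and since $\Ext^0(C, W) \cong \Ring$ this contributes in degree $+1$, not $-1$. On the chain level, $\mc{E}^{-1}$ is free of rank one, generated by the identity $p_1$ of $I_{P_1}$ viewed as a map $J^1 \to J^0$, and $d_\mc{E}(p_1) = e_{11} - e_{21} \neq 0$, so $H^{-1}\mc{E} = 0$. The actual cohomology is $H^0\mc{E} \cong \Ring^2$, $H^1\mc{E} \cong \Ring^2$, $H^2\mc{E} \cong \Ring$. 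With the paper's representatives $\{1, p_1^{(0)}\}$, $\{\hp{11}, p_1\}$, $\{\hp{11}\}$ in degrees $0,1,2$, the only nontrivial product of positive-degree classes is $p_1 \cdot \hp{11}$, which equals the degree-$2$ generator $\hp{11}$ on the nose; every other such product vanishes because the source and target summands in $J$ do not line up. So the multiplicativity obstacle you were worried about, premised on a degree $-1$ class pairing with a degree $+1$ class to hit an idempotent, simply does not arise once the degrees are computed correctly.
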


\begin{proof}
We abbreviate $\End J$ to $\mc{E}$ and will use the notations from
the proof of Proposition \ref{formalitytrivialstratification} to
specify a basis of $\mc{E}$. Whenever there is need to carry the
$p$-degree of a morphism
\begin{equation*} (\dots 0, f , 0\dots) \in \prod\limits_{p \in \BZ}
\Hom(I^p, I^{p+m})
\end{equation*} along in
the notation, we will write $f^{(p)}$ instead of just $f$.

\begin{longtable}{|r|lcl|}
\hline m & basis of $\mc{E}^m$& & image under $d_\mc{E}$\\\hline\endhead\nopagebreak
\hline \endfoot \endlastfoot-1 & $p_1$ & $\mapsto$ & $e_{11} - e_{21}$\\
\hline 0 & $h_1$ & $\mapsto$ & $\he{11} + \he{12}$\\* & $h_2$ &
$\mapsto$ & $-\he{21} -\he{22}$\\* & $e_1$ & $\mapsto$ &
$(\he{21}-\he{11}) + (e_{11} - e_{12})$\\* & $e_2$ & $\mapsto$ &
$(\he{22}-\he{12}) + (e_{22} - e_{21})$\\* & $p_1^{(0)}$ & $\mapsto$
& 0\\* & $p_1^{(1)}$ & $\mapsto$ & $e_{21}-e_{11}$\\* & $p_2$ &
$\mapsto$ & $e_{12}-e_{22}$\\* & $e_{11}$ & $\mapsto$ & $\hp{21} -
\hp{11}$\\* & $e_{21}$ & $\mapsto$ & $\hp{21} - \hp{11}$\\
\hline 1 & $\he{11}$ & $\mapsto$ & $\hp{11} - \hp{12}$\\* &
$\he{12}$ & $\mapsto$ & $\hp{12} - \hp{11}$\\* & $\he{21}$ &
$\mapsto$ & $\hp{21} - \hp{22}$\\* & $\he{22}$ & $\mapsto$ &
$\hp{22} - \hp{21}$\\ & $\hp{11}$ & $\mapsto$ & $0$\\ & $\hp{21}$ &
$\mapsto$ & $0$\\ & $p_1$ & $\mapsto$ & $0$\\ & $e_{11}$ & $\mapsto$
& $\hp{11} - \hp{21}$\\ & $e_{12}$ & $\mapsto$ & $\hp{12} -
\hp{22}$\\* & $e_{21}$ & $\mapsto$ & $\hp{11} - \hp{21}$\\* &
$e_{22}$ & $\mapsto$ & $\hp{12} - \hp{22}$\\
\hline 2 & $\hp{11}$ & $\mapsto$ & $0$\\* & $\hp{12}$ & $\mapsto$ &
$0$\\* & $\hp{21}$ & $\mapsto$ & $0$\\* & $\hp{22}$ & $\mapsto$ &
$0$\\* \hline
\end{longtable}
For all other $m$ we have $\mc{E}^m = 0$. Next, we need to calculate
bases of the kernel and image of the differential $d_\mc{E}$:

\begin{longtable}{|l|l|}
\hline & basis\\
\hline $\ker d_\mc{E}^{-1}$ & $\emptyset$\\
$\im d_\mc{E}^{-1}$ & $\{e_{11} - e_{21}\}$\\
$\ker d_\mc{E}^0$ & $\{1, e_{11} - e_{21}, p_1^{(0)}\}$\\
$\im d_\mc{E}^0$ & $\{d_\mc{E}^0(h_1), d_\mc{E}^0(e_1),
d_\mc{E}^0(e_2), d_\mc{E}^0(p_1^{(1)}), d_\mc{E}^0(p_2),
d_\mc{E}^0(e_{11}) \}$\\
$\ker d_\mc{E}^1$ & $\{d_\mc{E}^0(h_1), d_\mc{E}^0(e_1),
d_\mc{E}^0(e_2), d_\mc{E}^0(p_1^{(1)}), d_\mc{E}^0(p_2),
\hp{11}, \hp{21}, p_1\}$\\
$\im d_\mc{E}^1$ & $\{d_\mc{E}^1(\he{11}), d_\mc{E}^1(\he{21}),
d_\mc{E}^1(e_{11})\}$\\
$\ker d_\mc{E}^2$ & $\{\hp{11}, \hp{12}, \hp{21}, \hp{22}\}$\\\hline
\end{longtable}
Thus the $-1^\text{st}$ cohomology of $\mc{E}$ vanishes. $H^0\mc{E}$
is free of rank 2, with basis $\{1, p_1^{(0)}\}$, $H^1\mc{E}$ is
free of rank 2 as well, with basis $\{\hp{11}, p_1\}$, while
$H^2\mc{E}$ is free of rank 1 with basis $\{\hp{11}\}$. Now consider
the morphism $H\mc{E}\ra \mc{E}$ that maps every homology class to
its representative that we just specified. Since our system of
representatives is closed under multiplication, this is a morphism
of dg algebras and is a quasi-isomorphism by construction.
\end{proof}

\subsection{The 2-sphere and $n$ points}
Finally, we would like to consider for $n \geq 2$ the stratification
$\Sn$, which decomposes the 2-sphere into $n$ points and their
complement. Since the big stratum is no longer simply connected, the
skyscrapers at the $n$ points and the constant sheaf generate only a
subcategory of the corresponding constructible derived category
$D^b_{c,\Sn}(\Kugelschale)$. However, since our focus is on
formality in general, we will show that the corresponding dg algebra
is formal as well. Our final result is the proof of Theorem
\ref{main2} from the introduction:

\begin{theorem}\label{npointstratification}
For $n \geq 2$, let $\mc{W}_1, \dots, \mc{W}_n$ be the skyscrapers
at the $n$ points of the stratification $\Sn$. Then for any
injective resolution $I$ of $\mc{W}_1\oplus \dots \oplus
\mc{W}_n\oplus \cs\Ring\Kugelschale[1]$, the dg algebra $\End I$ is
formal.
\end{theorem}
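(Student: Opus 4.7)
The plan is to follow exactly the strategy of Theorem \ref{onepointstratification}, generalising each step from two strata to $n$ strata. First I would replace $\Sn$ by the acyclic stratification $\An$ of $\Kugelschale$, chosen so that the $n$ equatorial points coincide with the points of $\Sn$. Via the equivalences
$$\Rep(Q_{\An},\rho_{\An}) \sira \Sh{\An} \sira \operatorname{Sh}_{w,\hspace{0,5mm}c,\hspace{0,5mm}\An}(\Kugelschale)$$
the object $\mc{W}_1\oplus\dots\oplus\mc{W}_n\oplus\cs\Ring\Kugelschale[1]$ corresponds to $W_1\oplus\dots\oplus W_n\oplus C[1]$, where each $W_k$ is the representation supported on the vertex $P_k$ and $C$ is the constant representation. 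I would then build the obvious generalisation of the resolution used in the one-point case: concentrated in degrees $-1,0,1$,
$$I_{H_1}\oplus I_{H_2}\;\lra\;\bigoplus_{i=1}^{n} I_{E_i}\,\oplus\,\bigoplus_{k=1}^{n} I_{P_k}\;\lra\;\bigoplus_{k=1}^{n} I_{P_k},$$
where the differentials are built out of the canonical generators $\he{ji}$ and $e_{ij}$ with the alternating signs coming from the combinatorial boundary of each arc $E_i$ (with endpoints $P_i$ and $P_{i+1}$), and where the extra direct summand $\bigoplus I_{P_k}$ in degree $0$ provides the skyscraper summands $W_k$. Exactness is a stalk-by-stalk verification.

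With this resolution $J$ in hand, Lemma \ref{keineerweiterungen} (which applies unchanged to the strata of $\An$ for arbitrary $n$) shows that $J$ is sufficiently acyclic in the sense of Proposition \ref{hinreichendAzyklisch}, so the argument of Lemma \ref{sufficientlyacyclic} gives a quasi-equivalence between $\End I$ and $\End J$. Formality is therefore reduced to proving that $\End J$ is formal. I would compute $H(\End J)$ conceptually as
$$\bigoplus_{i,j}\Ext^\ast(L_i,L_j), \qquad L_0=C[1],\; L_k=\mc{W}_k\;(k\geq 1),$$
where each $\Ext^\ast(\mc{W}_i,\mc{W}_j)$ vanishes off the diagonal, $\Ext^\ast(C,C)=H^\ast(\Kugelschale,\Ring)$, and $\Ext^\ast(\mc{W}_k,C)$ respectively $\Ext^\ast(C,\mc{W}_k)$ each contribute one generator (in degree $1$ after the shift on $C[1]$) by adjunction. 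The only non-trivial multiplicative constraints in $H(\End J)$ are the identities together with the products $\Ext^1(\mc{W}_k,C[1])\cdot\Ext^1(C[1],\mc{W}_k)\to\Ext^2(C[1],C[1])$; all other degree-one times degree-one products land in $\Ext^2(\mc{W}_i,\mc{W}_j)=0$ and hence vanish.

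To finish the proof I would, exactly as in the one-point case, write down an explicit basis of $\mc{E}^m:=(\End J)^m$ for $m=-1,0,1,2$ (outside of which $\mc{E}^m=0$) using the notation $\he{ji},\,\hp{ji},\,e_{ij},\,h_j,\,e_i,\,p_k$, and tabulate the differential $d_{\mc{E}}$. From this one reads off bases of $\ker d_{\mc{E}}^m$ and $\im d_{\mc{E}}^m$, and picks canonical cocycle representatives for the cohomology generators: the identities (one for each summand of $J$ surviving cohomology), one representative $\hp{jj}$ per skyscraper-to-constant class and per constant-to-skyscraper class (exploiting the relation $d_\mc{E}(e_{ii})=\hp{i+1,i+1}-\hp{ii}$ to identify the $n$ classes $\hp{11},\dots,\hp{nn}$ modulo boundaries into a single class in $H^2$), and one representative $p_k$ for each skyscraper projection in $H^0$. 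The key step---and the main technical obstacle---is to choose these representatives so that the resulting graded subspace is closed under composition, so that the inclusion $H(\mc{E})\hra\mc{E}$ becomes a morphism of dg algebras and hence a quasi-isomorphism. The products that need to be checked are finitely many and of the same form as in the one-point case; the only genuinely new feature is the ``cyclic'' structure imposed by the $n$ arcs $E_i$ on the equator, and the combinatorial sign choices inherited from the boundary maps in $J$ are precisely what forces the chosen representatives to be multiplicatively closed. Once this closure is verified, the map $H(\mc{E})\to\mc{E}$ sending each cohomology class to its representative is a quasi-isomorphism of dg algebras, proving formality.
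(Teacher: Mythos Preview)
Your reduction to $\End J$ is correct and matches the paper: the acyclic refinement $\An$, the sufficiently acyclic objects $I_S$, the resolution $J$, and the appeal to Proposition \ref{hinreichendAzyklisch} all go through exactly as you describe.

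The gap is in the final step. You propose to exhibit a direct quasi-isomorphism $H(\mc{E})\hra\mc{E}$ by choosing a multiplicatively closed system of cocycle representatives, as in the one-point case. For $n\geq 2$ this fails, and the paper says so explicitly. The obstruction is concrete: a basis of $H^1\mc{E}$ is given by the classes of $\hp{1i}$ and $p_i$ for $i=1,\dots,n$, while $H^2\mc{E}$ is free of rank one. The relevant products in $\mc{E}$ are $p_i\cdot\hp{1i}=\hp{1i}\in\mc{E}^2$, and the elements $\hp{11},\dots,\hp{1n}$ are pairwise distinct in $\mc{E}^2$ even though they are all cohomologous. Any subspace of $\mc{E}$ containing the degree-one representatives $\hp{1i}$ and $p_i$ and closed under multiplication must therefore contain all of $\hp{11},\dots,\hp{1n}$ in degree two, hence has rank at least $n$ there, and the inclusion into $\mc{E}$ cannot induce an isomorphism onto the rank-one $H^2\mc{E}$. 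No sign convention or alternative choice of representatives repairs this: you would need $n$ distinct cocycles in degree two whose classes coincide, which forces extra boundaries into the picture.

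The paper's fix is a short zigzag rather than a single map. One first passes to a dg subalgebra $\mc{U}\subset\mc{E}$ (obtained essentially by discarding the basis elements responsible for the redundant degree-zero and degree-one pieces) such that $\mc{U}\hra\mc{E}$ is a quasi-isomorphism. One then quotients $\mc{U}$ by the two-sided dg ideal $\mc{I}$ generated in degree one by $\he{1i}$ for $i\geq 2$; this ideal is acyclic, so $\mc{U}\tha\mc{U}/\mc{I}$ is again a quasi-isomorphism. In $\mc{U}/\mc{I}$ the relations $\hp{11}=\hp{1i}$ hold on the nose, and only then can one choose representatives that are multiplicatively closed, giving $H(\mc{U}/\mc{I})\hra\mc{U}/\mc{I}$. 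The chain
\[
\mc{E}\hookleftarrow\mc{U}\tha\mc{U}/\mc{I}\hookleftarrow H(\mc{U}/\mc{I})
\]
then establishes formality. Your write-up should replace the claimed direct section $H(\mc{E})\to\mc{E}$ by this zigzag.
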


\begin{proof}
We consider the acyclic stratification $\An$ corresponding to $\Sn$,
which consists of $n$ points $P_1, \dots, P_n$ (for simplicity we
assume they are on the equator), the equator pieces in between,
$E_1, \dots, E_n$, and the two hemi-spheres $H_1, H_2$. The
corresponding quiver is given by
\begin{equation*} \underbrace{\xymatrix{&&\bullet & \bullet&&\\
\bullet \ar[urr] \ar[urrr]&\bullet \ar[ur] \ar[urr]&...&...& \bullet
\ar[ull] \ar[ul]&\bullet \ar[ulll] \ar[ull]\\ \bullet \ar[u] \ar[ur]
&\bullet \ar[u] \ar[ur] &...&...\ar[ur]& \bullet \ar[u] \ar[ur]&
\bullet \ar[u] \ar@/^/[ulllll]}}_{\text{$n$ vertices}}
\end{equation*}
where we identify by relations all paths having the same starting
and end point. Under the equivalence
$$\Rep(Q_{\An},\rho_{\An}) \sira
\Sh{\An} \sira \operatorname{Sh}_{w,\hspace{0,5mm}c,\hspace{0,5mm}
\An}(\Kugelschale)
$$
the constant sheaf $\cs\Ring\Kugelschale$ corresponds to the
representation $C$ that has $\Ring$ at every vertex and the identity
at every arrow. The skyscraper $\mc{W}_i$ corresponds to the
representation that has $\Ring$ at the $i$th vertex in the lower row
and zeros everywhere else. We denote this object by $W_i$.

Next, we need sufficiently acyclic objects. As before, we take for
every stratum $S$ of $\An$ the representation corresponding to the
constant sheaf along the closure of $S$. More precisely, let
$i:\ol{S}\hra\Kugelschale$ be the inclusion, and denote by $I_S$ the
image of $i_!\cs\Ring{\ol{S}}$ under the equivalence
$\operatorname{Sh}_{w,c,\An} \overset\sim\ra
\Rep(Q_{\An},\rho_{\An})$. This yields the following representation:
At the vertex corresponding to $S$ we have $\Ring$, as well as at
every vertex from which we have a path to the vertex $S$. The other
vertices get a zero. Finally we assign to an arrow the identity map,
if possible, and else the zero morphism. As in Lemma
\ref{keineerweiterungen} we see that those objects do not have
higher extensions.

Again, we need to study the morphism spaces between the objects
$I_S$. They vanish in all except for the following cases, in which
they are free of rank 1:
\begin{alignat*}{2} &\Hom(I_{H_j}, I_{H_j}) &
\quad\quad&\text{ for } j = 1 \text{ or } 2;\\
&\Hom(I_{H_j}, I_{E_i}) & \quad\quad& \text{ for } i = 1, \dots, n
\text{ und }j = 1,2;\\
&\Hom(I_{H_j}, I_{P_i}), &\quad\quad& \text{ for } i = 1, \dots, n
\text{ und
} j = 1,2;\\
&\Hom(I_{E_i}, I_{E_i}) & \quad\quad&\text{ for } i = 1, \dots,n;\\
&\Hom(I_{E_i}, I_{P_i}) & \quad\quad&\text{ for } i = 1, \dots, n;\\
&\Hom(I_{E_1}, I_{P_n}) \text{ and }\Hom(I_{E_i}, I_{P_{i-1}}) & \quad\quad&\text{ for } i = 2, \dots, n;\\
&\Hom(I_{P_i}, I_{P_i}) & \quad\quad&\text{ for } i = 1, \dots, n.
\end{alignat*}
Each of these morphism spaces is generated by a canonical morphism,
the one that is given by the identity on $\Ring$ wherever possible
and zeros everywhere else. We are going to use the same notations
for those morphisms as before: $h_j$, $e_i$ and $p_i$ are meant to
be the generators of $\Hom(I_{H_j}, I_{H_j})$,
$\Hom(I_{E_i},I_{E_i})$ and $\Hom(I_{P_i}, I_{P_i})$, respectively.
The generator of $\Hom(I_{H_j}, I_{E_i})$ is denoted by $\he{ji}$,
that of $\Hom(I_{H_j}, I_{P_i})$ by $\hp{ji}$ and that of
$\Hom(I_{E_i}, I_{P_k})$ by $e_{ik}$. For notational simplicity we
take indices modulo $n$, i.e. for $i=1$, we get $e_{i(i-1)}=e_{1n}$,
etc.

We now need to find an acyclic resolution of $W_1 \oplus\dots\oplus
W_n \oplus C$. With the above notations, we get the following
resolution:
\begin{align*}
&\hspace{5mm} I_{E_1} \oplus \ldots \oplus I_{E_n} \\
I_{H_1} \oplus I_{H_2}
\os{\scriptsize{\begin{pmatrix}\partial^0\\0\end{pmatrix}}}\ra
&\hspace{15mm}\oplus&\hspace{-10mm}\os{\scriptsize{\begin{pmatrix}
\partial^1&0\end{pmatrix}}}\ra
I_{P_1} \oplus \ldots \oplus I_{P_n}\\ \\&\hspace{5mm}I_{P_1} \oplus
\ldots \oplus I_{P_n}
\end{align*}
where $\partial^0$ is given by the matrix
$$\hspace{2mm}\begin{pmatrix}\he{11} & -\he{21}\\ \vdots & \vdots\\
\he{1n} & -\he{2n}\end{pmatrix}$$ and $\partial^1$ by
$$\begin{pmatrix}e_{11} &
-e_{21} &0& \cdots &0\\ 0 & e_{22}&0& \cdots&0\\
\vdots&0&\ddots& &\vdots\\0&\vdots&\cdots&0&-e_{n(n-1)}\\-e_{1n}
&0&\cdots&0&e_{nn}\end{pmatrix}$$

Using an argument similar to Lemma \ref{sufficientlyacyclic}, the
result follows by the next proposition.
\end{proof}

\begin{proposition}
For $J$ the resolution of $W_1 \oplus\dots\oplus W_n \oplus C$
introduced in the previous proof, the corresponding
endomorphism-dg-algebra $\End J$ is formal.
\end{proposition}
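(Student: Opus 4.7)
The plan is to mimic the explicit computations from the proofs of Propositions \ref{formalitytrivialstratification} and \ref{formalityonepointstratification}: write down a basis of $\mc{E} := \End J$ in each cohomological degree, tabulate the differential $d_\mc{E}$ on these basis vectors, read off $H\mc{E}$ by elementary linear algebra, and exhibit a collection of representative cocycles that is closed under composition. The inclusion $H\mc{E}\hookrightarrow\mc{E}$ sending each cohomology class to its chosen representative is then a morphism of dg algebras and a quasi-isomorphism, which is precisely formality.

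First, I would enumerate a basis. Because $\Hom(\cdot,\cdot)$ commutes with finite direct sums, each $\mc{E}^m$ is a free $\Ring$-module whose basis is obtained from the rank-one morphism spaces $h_j$, $e_i$, $p_i$, $\he{ji}$, $\hp{ji}$ and $e_{ik}$ (with $k\in\{i,i-1\}$ taken modulo $n$) listed just before the proposition. Exactly as in the one-point case, the summand $I_{P_i}$ appears twice in $J$---once in the middle term (from the skyscraper $W_i$) and once in the rightmost term (from $C[1]$)---so the $p_i$-, $\hp{ji}$- and $e_{ik}$-type basis vectors split into versions distinguished by the $p$-degree superscript of their source or target. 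This produces a finite basis of each $\mc{E}^m$, with $\mc{E}^m=0$ for $m \notin \{-1,0,1,2\}$.

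Next I would tabulate $d_\mc{E}$ on these basis vectors by composition with $\partial^0$ and $\partial^1$. The key relations produced by the cyclic matrix $\partial^1$ are of the form $d(\he{ji}) = \pm(\hp{ji}-\hp{j(i-1)})$ and $d(e_{ik}) = \pm(\hp{1k}-\hp{2k})$ with indices taken modulo $n$; together with the relations coming from $\partial^0$, these entirely determine $d_\mc{E}$. A direct analysis of kernels and images then identifies $H\mc{E}$ as a graded free $\Ring$-module whose basis consists of the identity together with the skyscraper idempotents $p_i^{(0)}$ in degree $0$; classes coming from the Ext groups $\Ext^1(\mc{W}_i, C[1])$ and $\Ext^1(C[1], \mc{W}_i)$ in degree $1$; and the fundamental class of $\Kugelschale$ in degree $2$, represented by $\hp{11}$ in its top-degree incarnation. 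In particular $H^{-1}\mc{E}=0$, because the relevant candidate cocycles are exact via the $e_i$-type generators, just as in the one-point case.

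The main obstacle, and the part that needs genuine care, is verifying closure of this chosen system of representatives under composition in $\mc{E}$. Most products vanish for trivial reasons: the grading constraint $\mc{E}^m=0$ for $m\geq 3$ rules out anything landing above degree $2$; the skyscraper idempotents $p_i^{(0)}$ are mutually orthogonal by support; and cross-products between skyscraper-related classes attached to distinct points vanish for the same reason. The only genuinely combinatorial check is the Yoneda pairing between the degree-$1$ representatives---for $i\neq j$ it vanishes by disjoint support, and for $i=j$ it reproduces a fixed multiple of the fundamental class, matching the Poincar\'e-duality pairing on $\Kugelschale$. Once this finite case analysis is carried out, the section $H\mc{E}\hookrightarrow\mc{E}$ is a dg algebra map that is a quasi-isomorphism by construction, and formality of $\End J$ is established.
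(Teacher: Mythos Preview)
Your plan mirrors the one-point argument, but the closure check in the last paragraph does not go through, and this is the heart of the matter. With the natural representatives---$\hp{1i}$ and $p_i$ for $H^1\mc{E}$ and $\hp{11}$ for $H^2\mc{E}$---the system is \emph{not} closed under multiplication in $\mc{E}$. Concretely, the degree-$1$ element $p_i$ (the identity $I_{P_i}\to I_{P_i}$ between the two copies appearing in $J$) composed with the degree-$1$ element $\hp{1i}$ gives the degree-$2$ element $\hp{1i}\in\mc{E}^2$. For $i\neq 1$ this is cohomologous to $\hp{11}$ (since $d(\he{1i})=\hp{1i}-\hp{1(i-1)}$) but not equal to it. So your claim that the pairing ``reproduces a fixed multiple of the fundamental class'' is true only in cohomology, not in $\mc{E}$ itself; the products land on $n$ distinct elements $\hp{11},\dots,\hp{1n}$ of $\mc{E}^2$, and no rechoosing of representatives can force them all to coincide. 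Hence there is no dg-algebra section $H\mc{E}\hookrightarrow\mc{E}$ here, unlike in the cases $n\leq 1$.

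The paper resolves this by a zigzag instead of a direct section. One first restricts to a dg-sub-algebra $\mc{U}\subset\mc{E}$ (dropping some basis elements while keeping all cohomology) so that the offending products factor through a controllable piece, and then quotients by the acyclic two-sided ideal $\mc{I}$ generated in degree $1$ by $\he{1i}$ for $i\geq 2$ and in degree $2$ by $\hp{1i}-\hp{1(i-1)}$ for $i\geq 2$. In $\mc{U}/\mc{I}$ the relations $\hp{11}=\hp{1i}$ hold on the nose, so the obvious representatives \emph{are} now multiplicatively closed, yielding $H(\mc{U}/\mc{I})\hookrightarrow\mc{U}/\mc{I}$. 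Formality then follows from the chain
\[
\mc{E}\;\hookleftarrow\;\mc{U}\;\twoheadrightarrow\;\mc{U}/\mc{I}\;\hookleftarrow\;H(\mc{U}/\mc{I}).
\]
Your tabulation of bases and differentials is the right first step; the missing idea is this intermediate $\mc{U}$ and the acyclic ideal that kills the differences $\hp{1i}-\hp{11}$.
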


\begin{proof}
This proof is going to be a little more complex than those of the
analogous statements before. We are going to find a dg-sub-algebra
$\mc{U}$ of $\End J$ and a two-sided ideal $\mc{I}$ of $\mc{U}$,
that yield a sequence of quasi-isomorphisms
\begin{equation*}
\End J \hookleftarrow \mc{U} \tha \mc{U}/I \hookleftarrow
H(\mc{U}/\mc{I})
\end{equation*}

For brevity, we write $\mc{E}$ instead of $\End J$. Using the same
notations as before we would like to give a basis of the free module
$\mc{E}$:

\begin{longtable}{|r|l|lcll|}
\hline m & rank of $\mc{E}^m$ & basis of $\mc{E}^m$& & image under
$d_\mc{E}$ &\\\hline\endhead \hline \endfoot \endlastfoot -1 & $n$ &
$p_i$ & $\mapsto$ & $e_{ii} - e_{(i+1)i}$ & $i = 1, \dots, n$\\
\hline 0 & $5n+2$ & $h_1$ & $\mapsto$ & $\sum\limits_{i=1}^n
\he{1i}$& \\* &&$h_2$ & $\mapsto$ & $-\sum\limits_{i=1}^n \he{2i}$ &
\\* &&$e_i$ & $\mapsto$ & $(\he{2i}-\he{1i}) + (e_{ii}-e_{i(i-1)})$
& $i = 1,\dots, n$\\* &&$p_i^{(0)}$ & $\mapsto$ & 0 & $i = 1,\dots,
n$\\* &&$p_i^{(1)}$ & $\mapsto$ & $e_{(i+1)i} - e_{ii}$, &$i = 1,
\dots, n$\\* &&$e_{ii}$ & $\mapsto$ & $\hp{2i} - \hp{1i}$ & $i =
1,\dots, n$\\* &&$e_{(i+1)i}$ & $\mapsto$ & $\hp{2i} - \hp{1i}$ & $i
= 1,\dots, n$\\ \hline 1 & $7n$ & $\he{1i}$ & $\mapsto$ & $\hp{1i} -
\hp{1(i-1)}$ & $i = 1,\dots, n$\\ &&$\he{2i}$ & $\mapsto$ & $\hp{2i}
- \hp{2(i-1)}$ & $i = 1,\dots, n$\\ &&$\hp{ji}$ & $\mapsto$ & 0 &
$j=1,2$, $i = 1,\dots, n$\\ &&$e_{ii}$ & $\mapsto$ & $\hp{1i} -
\hp{2i}$ & $i = 1,\dots, n$\\ &&$e_{(i+1)i}$ & $\mapsto$ & $\hp{1i}
- \hp{2i}$ & $i = 1,\dots, n$\\* &&$p_i$ & $\mapsto$ & 0 & $i =
1,\dots, n$\\ \hline 2 & $2n$ & $\hp{ji}$ & $\mapsto$ & 0 &$j =
1,2$, $i = 1,\dots, n$\\ \hline
\end{longtable}

Next, we need to specify a basis of the kernel and image of the
differential $d_\mc{E}$:

\begin{longtable}{|l|l|}
\hline & basis\\
\hline $\ker d_\mc{E}^{-1}$ & $\emptyset$\\
$\im d_\mc{E}^{-1}$ & the basis of $\mc{E}^{-1}$ as in the table above\\
$\ker d_\mc{E}^0$ & $\{1, p_i^{(0)}, (e_{ii} - e_{(i+1)i})
\hspace{1mm}|\hspace{1mm}i=1, \dots, n\}$\\
$\im d_\mc{E}^0$ & $\{d_\mc{E}^0 (h_1), d_\mc{E}^0(e_i),
d_\mc{E}^0(p_i^{(1)}),
d_\mc{E}^0(e_{ii})\hspace{1mm}|\hspace{1mm}i=1, \dots, n\}$\\
$\ker d_\mc{E}^1$ & $\{d_\mc{E}^0(h_1), d_\mc{E}^0(e_i),
d_\mc{E}^0(p_i^{(1)}), \hp{ji}, p_i \hspace{1mm}|\hspace{1mm}j =
1,2, i=1, \dots, n\}$\\
$\im d_\mc{E}^1$ & $\{d_\mc{E}^1(\he{1i}), d_\mc{E}^1(\he{2i}),
d_\mc{E}^1(e_{11})\hspace{1mm}|\hspace{1mm} i=2, \dots, n\}$\\
$\ker d_\mc{E}^2$ & the basis of $\mc{E}^2$ as in the table above\\
\hline
\end{longtable}

From this table we can read the cohomology of $\mc{E}$:
\begin{align*}
&H^{-1}\mc{E} = 0, \hspace{4mm} H^0\mc{E} = \langle 1, \hspace{2mm}
p_i^{{(0)}} \rangle_{i=1, \dots, n}, \hspace{4mm} H^1\mc{E} =
\langle \hp{1i}, p_i \rangle_{i=1, \dots, n} &\text{and } H^2\mc{E}
= \langle \hp{11} \rangle.
\end{align*}

Unfortunately, unlike before, we cannot find a quasi-isomorphism
$H\mc{E} \ra \mc{E}$. This is because in this situation, there is no
system of representants of the generators of $H\mc{E}$ that form a
sub-algebra of $\mc{E}$. The problem is that when multiplying the
representants of $H^1\mc{E}$, we get all of the $\hp{1i}$. Hence the
first step is to identify the $\hp{1i}$, which can be done by
defining a subalgebra $\mc{U}$ of $\mc{E}$ and dividing it by a
suitable ideal $\mc{I}$.

Consider the dg algbra $\mc{U}$ given by the following basis:

\begin{longtable}{|r|l|lcll|}
\hline m & rank of $\mc{U}^m$ & basis of $\mc{U}^m$& & image under
$d_\mc{U}$ &\endhead
\hline -1 & 0 & $\emptyset$ &&&\\
\hline 0 & $2n + 3$ & $h_1$ & $\mapsto$ & $\sum\limits_{i=1}^n
\he{1i}$& \\* &&$h_2$ & $\mapsto$ & $-\sum\limits_{i=1}^n \he{2i}$ &
\\* &&$e_i$ & $\mapsto$ & $(\he{2i}-\he{1i}) + (e_{ii}-e_{i(i-1)})$
& $i = 1,\dots, n$\\* &&$p_i^{(0)}$ & $\mapsto$ & 0 & $i = 1,\dots,
n$\\* &&$\sum_{i=1}^n p_i^{(1)}$ & $\mapsto$ & $\sum_{i=1}^n
(e_{(i+1)i} -
e_{ii})$ &\\
\hline 1 & $5n+1$ & $\he{1i}$ & $\mapsto$ & $\hp{1i} - \hp{1(i-1)}$
& $i = 1,\dots, n$\\* && $\he{2i}$ & $\mapsto$ & $\hp{2i} -
\hp{2(i-1)}$ & $i = 1,\dots, n$\\* &&$\hp{1i}$ & $\mapsto$ & 0 & $i
= 1,\dots, n$\\* &&$e_{11}$ & $\mapsto$ & $\hp{11} - \hp{21}$ &\\*
&&$e_{1n}$ & $\mapsto$ & $\hp{1n} - \hp{2n}$ & \\* &&$e_{ii} -
e_{i(i-1)}$ & $\mapsto$ & $\hp{1i} - \hp{2i} - (\hp{1(i-1)} -
\hp{2(i-1)})$ & $i = 2, \dots, n$\\*
&&$p_i$ & $\mapsto$ & 0 & $i = 1,\dots, n$\\
\hline 2 & $2n$ & $\hp{ji}$ & $\mapsto$ & 0 & $j = 1,2$, $i =
1,\dots, n$\\ \hline
\end{longtable}
\noindent It is easy to see that $\mc{U}$ is closed by
multiplication, and that the inclusion $\mc{U} \hra \mc{E}$ is a
quasi-isomorphism.

Consider the two-sided ideal $\mc{I}$ of $\mc{U}$ that is given by
$\mc{I}^0 = 0$, $\mc{I}^1 = \langle \he{1i} \rangle_{i = 2, \dots,
n}$ and $\mc{I}^2 = \langle \hp{1i} - \hp{1(i-1)} \rangle_{i = 2,
\dots, n}$. The cohomology of $\mc{I}$ vanishes, hence by the long
exact cohomology sequence the projection $\mc{U} \tha \mc{U}/\mc{I}$
is a quasi-isomorphism.

Now in $\mc{U}/\mc{I}$ we have the following relations:
$$\hp{11} = \hp{1i} \text{ for } i = 2, \dots, n$$
Hence the following system of generators of the cohomology of
$\mc{U}/\mc{I}$ is closed under multiplication:
$$H^0(\mc{U}/\mc{I}) = \langle 1,
p_i^{{(0)}} \rangle_{i=1, \dots, n}, \hspace{1mm} H^1(\mc{U}/\mc{I})
= \langle \hp{1i}, p_i \rangle_{i=1, \dots, n} \text{ und }
H^2(\mc{U}/\mc{I}) = \langle \hp{11} \rangle$$ Mapping each of those
generators to the corresponding representant yields the final
quasi-isomorphism in the chain
$$\mc{E} \hookleftarrow \mc{U} \tha \mc{U}/I \hookleftarrow
H(\mc{U}/\mc{I})$$ and we are done.
\end{proof}

\bibliographystyle{amsalpha}

\end{document}